\documentclass[10pt]{amsart}
\usepackage{amsmath}
\usepackage{amsfonts}
\usepackage{amsthm}
\usepackage{amssymb}
\usepackage{mathrsfs}
\usepackage{latexsym}
\usepackage{verbatim}
\usepackage{dsfont}
\usepackage{stmaryrd}
\usepackage{graphics}
\usepackage{a4wide}
\usepackage[english,french]{babel}

\bibliographystyle{plain}

\usepackage{amsmath}

\usepackage{amssymb}

\usepackage{amscd} \usepackage{tabularx}
\usepackage[all,cmtip,line]{xy}

\newcommand{\ra}{\rightarrow}

\newcommand{\into}{\hookrightarrow}

\newlength{\ownl}

\newcommand{\Ext}{{\operatorname{Ext}\,}}

\newcommand{\Frob}{{\operatorname{Frob}}}

\newcommand{\Gal}{{\operatorname{Gal}\,}}

\newcommand{\Hom}{{\operatorname{Hom}\,}}

\newcommand{\Ind}{{\operatorname{Ind}\,}}

\newcommand{\ad}{{\operatorname{ad}\,}}

\newcommand{\tr}{{\operatorname{tr}\,}}

\newcommand{\semis}{{\operatorname{ss}}}

\newcommand{\A}{{\mathbb{A}}}

\newcommand{\C}{{\mathbb{C}}}

\newcommand{\F}{{\mathbb{F}}}

\newcommand{\Q}{{\mathbb{Q}}}
\newcommand{\R}{{\mathbb{R}}}

\newcommand{\Z}{{\mathbb{Z}}}

\newcommand{\gog}{{\mathfrak{g}}}
\newcommand{\gh}{{\mathfrak{h}}}

\newcommand{\gz}{{\mathfrak{z}}}

\newcommand{\Mbar}{\overline{{M}}}

\newcommand{\Qbar}{{\overline{\Q}}}

\newcommand{\kbar}{{\overline{k}}}

\def\RCS$#1: #2 ${\expandafter\def\csname RCS#1\endcsname{#2}}
\RCS$Revision: 86 $
\RCS$Date: 2016-09-14 15:26:10 +0100 (Wed, 14 Sep 2016) $

\newcommand{\cL}{\mathcal{L}}

\newcommand{\chibar}{\bar{\chi}}
\newcommand{\phibar}{\bar{\phi}}

\DeclareMathOperator{\GO}{GO}
\DeclareMathOperator{\SO}{SO}

\newcommand{\rhobar}{\overline{\rho}} 
\newcommand{\rbar}{\bar{r}}
\newcommand{\sbar}{\bar{s}}
\newcommand{\tbar}{\bar{t}}

\newcommand{\GL}{\operatorname{GL}}
\newcommand{\GSp}{\operatorname{GSp}}
\newcommand{\Sp}{\operatorname{Sp}}
\newcommand{\PGL}{\operatorname{PGL}}
\newcommand{\HT}{\operatorname{HT}}

\renewcommand{\fg}{\mathfrak{g}}

\newcommand{\cA}{\mathcal{A}}

\newcommand{\cO}{\mathcal{O}}

\newcommand{\cR}{\mathcal{R}}
\newcommand{\cS}{\mathcal{S}}

\newcommand{\Qlbar}{\overline{\Q}_{l}}

\newcommand{\Fl}{{\F_l}}
\newcommand{\Flbar}{\overline{\F}_l}

\newcommand{\SL}{\operatorname{SL}}
\newcommand{\PSL}{\operatorname{PSL}}

\newcommand{\Sym}{\operatorname{Sym}}

\def\abar{\overline{a}}
\def\bbar{\overline{b}}

\def\OL{\mathcal{O}}

\newcommand\Sel{\mathcal{L}}

\usepackage{hyperref}

\usepackage{amsthm}

\newtheorem{thm}{Theorem}[section]
\newtheorem{corollary}[thm]{Corollary}
\newtheorem{cor}[thm]{Corollary}
 \newtheorem{lemma}[thm]{Lemma}
\newtheorem{lem}[thm]{Lemma} \newtheorem{prop}[thm]{Proposition}
 \theoremstyle{definition}
 \theoremstyle{definition}
\newtheorem{defn}[thm]{Definition} \theoremstyle{remark}
\newtheorem{df}[thm]{Definition}

\numberwithin{equation}{section}

\theoremstyle{definition}

\newcommand\sss{\mathrm{ss}}

\newcommand\slf{\mathfrak{sl}}
\renewcommand\sl{\mathfrak{sl}}
\newcommand\spf{\mathfrak{sp}}
\newcommand\sof{\mathfrak{so}}
\newcommand\so{\mathfrak{so}}
\renewcommand\sp{\mathfrak{sp}}

\setcounter{tocdepth}{1}

\begin{document}
\selectlanguage{english}
\title[Irreducibility of automorphic Galois representations]{Irreducibility of automorphic Galois
  representations of $\GL(n)$, $n$ at most $5$.}

\author{Frank Calegari} \email{fcale@math.northwestern.edu} \address{Department of Mathematics,
Northwestern University} 
\author{Toby Gee} \email{gee@math.northwestern.edu} \address{Department of
  Mathematics, Northwestern University}  \thanks{The first author was partially supported
  by a Sloan foundation grant and 
NSF Career Grant DMS-0846285. The second author was partially supported
  by NSF grant DMS-0841491}  \subjclass[2000]{11F80, 11R39.}
\keywords{Galois representations, automorphic
  representations. Repr\'esentations galoisiennes, repr\'esentations automorphes.}

\begin{abstract}
  Let $\pi$ be a regular, algebraic, essentially self-dual cuspidal
  automorphic representation of $\GL_n(\A_F)$, where $F$ is a totally
  real field and $n$ is at most $5$.  We show that for all primes $l$,
  the $l$-adic Galois representations associated to $\pi$ are
  irreducible, and for all but finitely many primes $l$, the mod $l$
  Galois representations associated to $\pi$ are also irreducible. We
  also show that the Lie algebras of the Zariski closures of the $l$-adic representations are
  independent of $l$.
\end{abstract}
\maketitle
\selectlanguage{english}

\section*{Erratum}

{\large
{\bf WARNING:\rm}
 There is an error in (the statement of) Theorem 4.7, namely, it does
 not apply to the group $H = \mathrm{GO}(V) \subset \mathrm{GL}(V)$
 when $\mathrm{dim}(V) = 5$. This invalidates the proof of Theorem 4.1
 both for $n = 5$ and $n = 4$. For $n = 4$, our methods seem
 inadequate to distinguish between the possibility that the Lie
 algebra of the image is $\mathfrak{sl}_2 \times \mathfrak{sl}_2$ or
 $\mathfrak{sp}_4$, because these groups have $4$-dimensional
 representations with the same formal character. The most general
results that we know of in the case $n=4$ are
those of~\cite{MR3156860}.

The results of the first three sections remain valid, as do Lemma
4.3 and Corollary 4.4, and the results of section~5. (The statement of Proposition~4.5 is missing the symmetric square
 of the standard representation of~$\mathfrak{sl}_3$.)

We would like to thank Susan Xia for bringing this mistake to our attention.
}

 \section{Introduction} \subsection{}It is a folklore conjecture that
 the Galois representations (conjecturally) associated to algebraic
 cuspidal automorphic representations of $\GL_n(\A_F)$ over a number
 field $F$ are all irreducible. In general, rather little is known in
 this direction. Ribet (\cite{MR0453647}) proved this result for
 classical modular forms, and his proof extends to the case of Hilbert
 modular forms (\cite{MR1363502}). The result was proved for essentially self-dual
 representations of $\GL_3(\A_F)$, $F$ totally real,
 in~\cite{MR1155236}. 
 In~\cite{0810.2203}, Dieulefait and Vila proved big image results
for a compatible family arising from a rank
four pure motive $M$ over $\Q$ with Hodge-Tate
weights $(0,1,2,3)$, coefficients in
a quadratic field $K$, and  certain other
supplementary hypotheses (see also~\cite{MR2472504}).
   In a 2009 preprint
 (\cite{ramakrishnan2009irreducibility}), Ramakrishnan proves
 irreducibility of the associated $l$-adic representations for
 essentially self-dual representations of $\GL_4(\A_{\Q})$ for
 sufficiently large $l$; his argument also applies without the
 assumption of self-duality \emph{assuming} the existence of the
 corresponding Galois representations.

Until recently, very little was known in the general case. It is
sometimes the case that the Galois representations can be proved to be
irreducible for purely local reasons; if the automorphic
representation is square-integrable at some finite place, then it is a
consequence of the expected local-global compatibility that the
corresponding local Galois representation is indecomposable, which
implies that the global Galois representation, being semisimple, is 
irreducible. In~\cite{ty}, this observation was used to prove the
irreducibility of the Galois representations considered in~\cite{ht}
whenever the square-integrability hypothesis holds.

One reason to suppose that the Galois representations should be
irreducible is that if the Fontaine--Mazur--Langlands conjecture
holds, then (by a standard $L$-function argument) the reducibility of
an $l$-adic Galois representation associated to an automorphic
representation would show that the automorphic representation could
not be cuspidal. In fact, it is enough to know that any geometric
Galois representation is potentially automorphic, as this $L$-function
argument is compatible with the usual arguments involving Brauer's
theorem. This observation was exploited in~\cite{blggt} to prove that
if $K$ is an imaginary CM field and $\pi$ is a regular, algebraic,
essentially self-dual cuspidal automorphic representation of
$\GL_n(\A_K)$ which has extremely regular weight (a notion defined in
\cite{blggt}), then for a density one set of primes $l$, the $l$-adic
Galois representations associated to $\pi$ are irreducible.  While
this theorem is useful in practice, the condition that the weight of
$\pi$ be extremely regular is sometimes too restrictive. For example,
it is never satisfied by the base change of an automorphic
representation over a totally real field if $n>3$. In the present
paper, we begin by extending the result of~\cite{blggt} to the case of
totally real fields if $n\le 5$, with no assumption on the weight of
$\pi$. Just as in~\cite{blggt}, we use potential automorphy theorems
and the $L$-function argument mentioned above. The key difficulty with
applying the potential automorphy theorems available to us is to show
that any hypothetical summand of the Galois representation to be
proved irreducible is both essentially self-dual and odd. It is here
that the arguments of~\cite{blggt} make use of the condition of
extreme regularity, which is not available to us. Instead, we observe
that if $n\le 5$ then any constituent of dimension at least $3$ must
be essentially self-dual for dimension reasons, and is then odd by the
main result of~\cite{belchen}. One dimensional summands are trivial to
deal with, which leaves us with the problem of dealing with
2-dimensional summands. However, any two-dimensional representation is
essentially self-dual, so we need only show that we cannot have even
two-dimensional constituents, at least outside of a set of places of
density zero. To do this, we use a variant of the arguments
of~\cite{frank}, together with an argument using class field theory to
show that there cannot be too many residually dihedral
representations.

The arguments outlined so far suffice to prove the result for a set of
primes of density $1$.  In order to extend our result to all primes,
we make use of a group-theoretic argument (in combination with our
density $1$ result) to show that the characteristic polynomials of the
images of the Frobenius elements can only be divisible by the
characteristic polynomials of a global character in certain special
cases, which rules out the possibility of any of the Galois
representations having a one-dimensional summand. We use the same
argument to show that it is not possible for any of the
representations to have a dihedral summand. We make use of the
self-duality of the Galois representations we consider to reduce to
these possibilities and so obtain the result.

In order to extend this argument to the characteristic $l$ representations, we
show using class field theory that if infinitely many of the
characteristic $l$ representations have a one-dimensional summand,
then the characteristic polynomials of the images of the Frobenius
elements are divisible by the characteristic polynomials of a global
character, which reduces us to the cases above. We prove a similar
result for dihedral representations. This quickly reduces us to one
special case, that of an irreducible $4$-dimensional subrepresentation
which when reduced mod $l$ splits up as a sum of two irreducible
$2$-dimensional representations.  In this case, we are able to exploit
the connection between $\GO_4$ and $\GL_2\times\GL_2$ to reach a
contradiction.

Using similar arguments, we are also able to show that the Lie
algebras of the Zariski closures of the images of the $l$-adic
representations are independent of $l$. Following
\cite{ramakrishnan2009irreducibility}, we additionally extend our
analysis to the Galois representations associated to regular algebraic
cuspidal automorphic representations of $\GL_3$ or $\GL_4$ over a
totally real field which are not assumed to be essentially self-dual,
under the hypothesis that the Galois representations exist.

One may naturally ask whether these methods can be generalized to $n
\ge 6$; we explain why this might be difficult.  Suppose that $\pi$ is
a regular algebraic cuspidal automorphic representation of
$\GL_3(\A_{\Q})$ which is not essentially self-dual (they
exist!). Then, conjecturally, there should exist a compatible system
of three dimensional Galois representations $\cR =
\{r_{\lambda}(\pi)\}$ of $G_{\Q}$. For a sufficiently large integer
$n$, the compatible system $\cR \oplus (\epsilon^n \otimes
\cR^{\vee})$ is a six dimensional compatible system of essentially
self-dual regular Galois representations.  Our method for ruling out
that this (completely reducible) compatible system is associated to a
regular, algebraic essentially self-dual cuspidal automorphic
representation $\Pi$ of $\GL_6(\A_{\Q})$ would consist of recognizing
it as an isobaric sum $\pi \boxplus (\pi^{\vee} \otimes | \cdot |^n)$
by proving the (potential) automorphy of a non-essentially self-dual
representation $r_{\lambda}(\pi): G_{\Q} \rightarrow \GL_3(\Qbar_{l})$
for some prime $l$.  However, such automorphy results are out of reach
at present.

We would like to thank Florian Herzig for helpful conversations about representation theory,
and Dinakar Ramakrishan for making available to us the preprint~\cite{ramakrishnan2009irreducibility}.
We would also especially like to thank Robert Guralnick, who (together with Malle in~\cite{guralnickmalle}) answered a difficult
problem of the first author in the modular representation theory of finite groups. 
Even though (due to  subsequent simplifications)
we did not end up using this result,  the mere knowledge of its veracity was helpful psychologically
in the construction of several of our arguments. We would also like to
thank Tom Barnet-Lamb, Kevin Buzzard, Luis Dieulefait, Matthew Emerton
and Florian Herzig for their helpful comments on an earlier draft of
this paper, and the referee for their helpful comments and corrections.

\section{Preliminaries}\subsection{}
We recall some notions from~\cite{blggt}. Let $F$ be a totally real
field. By a {\em RAESDC} (regular, algebraic, essentially self-dual,
cuspidal) automorphic representation of $\GL_n(\A_{F})$, we mean a pair
$(\pi,\chi)$ where
\begin{itemize}
\item $\pi$ is a cuspidal automorphic representation of $\GL_n(\A_F)$
  such that $\pi_\infty$ has the same infinitesimal character as some
  irreducible algebraic representation of the restriction of scalars
  from $F$ to $\Q$ of $GL_n$,
\item $\chi:\A_{F}^\times/F^\times \ra \C^\times$ is a continuous
  character such that $\chi_v(-1)$ is independent of $v|\infty$,
\item and $\pi \cong \pi^\vee \otimes (\chi \circ \det)$.
\end{itemize}

If $\Omega$ is an algebraically closed field of characteristic $0$, we 
write $(\Z^n)^{\Hom(F,\Omega),+}$ for the set of $a=(a_{\tau,i}) \in (\Z^n)^{\Hom(F,\Omega)}$ satisfying
\[ a_{\tau,1} \geq \dots \geq a_{\tau,n}. \] If $a \in (\Z^n)^{\Hom(F,\C),+}$, let $\Xi_a$ denote the irreducible
algebraic representation of $GL_n^{\Hom(F,\C)}$ which is the tensor
product over $\tau$ of the irreducible representations of $GL_n$ with
highest weights $a_\tau=(a_{\tau,i})_{1\le i\le n}$. We say that a RAESDC automorphic
representation $(\pi,\chi)$ of $GL_n(\A_F)$ has {\em weight $a$} if
$\pi_\infty$ has the same infinitesimal character as $\Xi_a^\vee$
(this is necessarily the case for some unique $a$). There is
necessarily an integer $w$ such that \[ a_{\tau,i}+a_{\tau,
  n+1-i}=w \] for all $\tau$, $i$ (cf. section 2.1 of~\cite{blggt}).

We refer the reader to section 5.1 of~\cite{blggt} for the definition
of a compatible system of Galois representations, and for various
attendant definitions. If $(\pi,\chi)$ is a RAESDC automorphic
representation of $\GL_n(\A_F)$, then there is a number field $M$
containing the images of all embeddings $F\into \Mbar$ and weakly compatible systems of Galois
representations \[r_\lambda(\pi):G_F\to\GL_n(\Mbar_\lambda)\] and
\[r_\lambda(\chi):G_F\to\Mbar_\lambda^\times\] as $\lambda$ ranges over
the finite places of $M$ (cf. the last paragraph of section 5.1 of
\cite{blggt}). Suppose that $\pi$ has weight
$a\in(\Z^n)^{\Hom(F,\C),+}$, and regard each element of $\Hom(F,\C)$
as an element of $\Hom(F,\Mbar)$. Then:
\begin{itemize}
\item if $S$ is the finite
set of finite places $v$ of $F$ at which $\pi_v$ is ramified, then
$r_\lambda(\pi)$ and $r_\lambda(\chi)$ are unramified unless $v\in S$
or $v|l$;
\item $r_\lambda(\pi)\cong r_\lambda(\pi)^\vee\otimes\epsilon^{1-n}r_\lambda(\chi)$;
\item if $v|l$ then $r_\lambda(\pi)|_{G_{F_v}}$ and
  $r_\lambda(\chi)|_{G_{F_v}}$ are de Rham. If furthermore $v\notin S$
  then $r_\lambda(\pi)|_{G_{F_v}}$ and $r_\lambda(\chi)|_{G_{F_v}}$
  are crystalline;
\item for each $\tau:F\into\Mbar$ and any $\Mbar\into\Mbar_\lambda$
  over $M$, the set $\HT_\tau(r_\lambda(\pi))$ of $\tau$-Hodge-Tate
  weights of $r_\lambda(\pi)$ is equal to \[\{a_{\tau,1}+(n-1),a_{\tau,2}+(n-2),\dots,a_{\tau,n}\}.\]
\end{itemize}
In arguments it will occasionally be useful to replace $M$ with a
finite extension, in order to compare two different compatible
systems; we will do this without comment.

While we will not make explicit use of this fact, to orient the reader
we remark that if $v\notin S,v\nmid l$ is a finite place of $F$, and $\Frob_v$
is a geometric Frobenius element at $v$, then the characteristic
polynomial of $r_\lambda(\pi)(\Frob_v)$
is \[X^n-t_v^{(1)}X^{n-1}+\dots+(-1)^jt_v^{(j)}(\mathbf{N}
v)^{j(j-1)/2}X^{n-j}+\dots+(-1)^jt_v^{(n)}(\mathbf{N}
v)^{n(n-1)/2}X^n,\] where the $t_v^{(j)}$ are the eigenvalues of the usual
Hecke operators on $\pi_v^{\GL_2(\cO_{F_v})}$.

\medskip

If $\rho: G \rightarrow \GL(V)$ is any semi-simple two dimensional irreducible representation
which is induced from an index two subgroup $G'$ of $G$, then, by abuse of notation, we call
$r$ \emph{dihedral}. The image of a dihedral representation is a generalized dihedral group; equivalently,
the \emph{projective} image of  $\rho$ in $\PGL(V)$ is a dihedral group.

\subsection{Oddness}We now recall from section 2.1 of~\cite{blggt} the
notion of oddness for essentially self-dual representations of
$G_F$. Let $l>2$ be a prime number, and let $K=\Qlbar$ or $\Flbar$. If
$r:G_F\to\GL_n(K)$ and $\mu:G_F\to K^\times$ are
continuous homomorphisms, then we say that the pair $(r,\mu)$ is
essentially self-dual if for some (so any) infinite place $v$ of $F$
there is an $\epsilon_v\in\{\pm 1\}$ and a  non-degenerate pairing $\langle,\rangle$ on $K^n$ such that
\[\langle x,y\rangle=\epsilon_v\langle y,x\rangle\] and \[\langle
r(\sigma)x, r(c_v\sigma c_v)y\rangle=\mu(\sigma)\langle x,y\rangle\]
for all $x$, $y\in K^n$ and all $\sigma\in G_F$. Equivalently,
$(r,\mu)$ is essentially self-dual if and only if either
$\mu(c_v)=-\epsilon_v$ and $r$ factors through $\GSp_n(K)$ with
multiplier $\mu$, or $\mu(c_v)=\epsilon_v$ and $r$ factors through
$\GO_n(K)$ with multiplier $\mu$.

We say that the pair $(r,\mu)$ is \emph{odd} and essentially self-dual if it is
essentially self-dual, and $\epsilon_v=1$ for all $v|\infty$.

 \begin{lem}\label{lem: odd-dimensional implies odd}
    If $(r,\mu)$ is essentially self-dual and $n$ is odd, then $(r,\mu)$ is odd.
  \end{lem}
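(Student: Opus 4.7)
The plan is to exploit the fact that non-degenerate alternating bilinear forms do not exist on odd-dimensional vector spaces (over a field of characteristic not $2$, which we have since $l > 2$). Concretely, fix an infinite place $v$ of $F$; by hypothesis there exist $\epsilon_v \in \{\pm 1\}$ and a non-degenerate pairing $\langle,\rangle$ on $K^n$ with $\langle x,y\rangle = \epsilon_v \langle y,x\rangle$ satisfying the required Galois-equivariance with multiplier $\mu$.

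If $\epsilon_v = -1$, then the pairing is alternating, and its Gram matrix with respect to any basis is a skew-symmetric $n \times n$ matrix. For odd $n$ in characteristic different from $2$, every skew-symmetric matrix has determinant $0$, contradicting non-degeneracy. Hence $\epsilon_v = 1$. Since this argument applies to any infinite place $v$ of $F$, we conclude that $\epsilon_v = 1$ for all $v \mid \infty$, which is precisely the condition that $(r,\mu)$ be odd.

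There is no real obstacle here; the statement is essentially the observation that the group $\GSp_n$ is only defined in even dimension $n$, so in the equivalent formulation of essential self-duality the only option for odd $n$ is that $r$ factors through $\GO_n(K)$ with $\mu(c_v) = \epsilon_v = 1$.
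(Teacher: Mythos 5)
Your proof is correct: since oddness is by definition the condition $\epsilon_v=1$ for all $v\mid\infty$, and a non-degenerate pairing on $K^n$ with $\langle x,y\rangle=-\langle y,x\rangle$ cannot exist for $n$ odd in characteristic $\neq 2$ (which holds since $K=\Qlbar$ or $\Flbar$ with $l>2$), the conclusion follows at once. The paper takes a slightly less direct route through the ``equivalent'' formulation: it first notes that $r$ must factor through $\GO_n(K)$ (the same parity obstruction, applied to the untwisted similitude form), and then takes determinants of the similitude relation at $c_v$ to get $\mu(c_v)^n=\det(r(c_v))^2=1$, hence $\mu(c_v)=1$ and so $\epsilon_v=\mu(c_v)=1$. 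Your argument applies the parity obstruction directly to the pairing $\langle\,,\rangle$ appearing in the definition and thereby dispenses with the determinant step entirely; this is cleaner, and both arguments are equally valid. One small caution about your closing remark: the bare observation that $r$ factors through $\GO_n(K)$ only yields $\mu(c_v)=\epsilon_v$, not that this common value is $1$, so that remark on its own would not close the proof without either your direct symmetry argument or the paper's determinant computation.
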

  \begin{proof}
    Since $n$ is odd, $r$ factors through $\GO_n(K)$ with multiplier
    $\mu$. Taking determinants, we see that for each $v|\infty$,
    $\mu(c_v)^n=1$, so that $\mu(c_v)=1$, as required.
  \end{proof}
We also have the following trivial lemma.
\begin{lem}
  \label{lem: one-dimensional implies odd esd}If $\chi:G_F\to
  K^\times$ is a character, then $(\chi,\chi^2)$ is odd and
  essentially self-dual.
\end{lem}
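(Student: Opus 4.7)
The statement is a direct unpacking of the definitions, so the plan is simply to produce the pairing and verify the two required identities.

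First I would take the one-dimensional vector space $V = K$ (since $n=1$) and equip it with the pairing given by multiplication, $\langle x,y\rangle := xy$. This pairing is manifestly non-degenerate and symmetric, so for every infinite place $v$ of $F$ we have $\langle x,y\rangle = \langle y,x\rangle$, i.e. $\epsilon_v = 1$. This already takes care of the oddness condition (once essential self-duality is verified), since oddness is precisely the requirement that $\epsilon_v=1$ for all $v\mid\infty$.

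Next I would verify the intertwining property $\langle \chi(\sigma)x,\chi(c_v\sigma c_v)y\rangle = \chi^2(\sigma)\langle x,y\rangle$. Expanding, the left-hand side equals $\chi(\sigma)\chi(c_v\sigma c_v)\,xy$, so the identity reduces to $\chi(c_v\sigma c_v)=\chi(\sigma)$. This is immediate: $\chi$ is a homomorphism to the abelian group $K^\times$, so it factors through $G_F^{\mathrm{ab}}$, where conjugation by $c_v$ (or any other element) is trivial. Hence $(\chi,\chi^2)$ is essentially self-dual with multiplier $\chi^2$ and symmetry sign $\epsilon_v = 1$ at every infinite place, so it is odd.

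There is no real obstacle here; the only subtlety worth flagging is the distinction between the two cases of the essential self-duality definition (the $\GSp_n$ vs.\ $\GO_n$ case), but in the one-dimensional setting we sit trivially in the orthogonal case ($\GO_1 = \{\pm 1\}$), with multiplier satisfying $\mu(c_v) = \chi(c_v)^2 = 1 = \epsilon_v$, consistent with the alternative characterization given after the definition.
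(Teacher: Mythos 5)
Your proof is correct; the paper states this lemma without proof (labelling it ``trivial''), and your direct verification --- taking the multiplication pairing on $K$, noting it is symmetric so $\epsilon_v=1$, and using $c_v^2=1$ together with the commutativity of $K^\times$ to check the intertwining identity with multiplier $\chi^2$ --- is exactly the intended argument.
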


We have the following important result of~\cite{belchen}.
\begin{thm}\label{thm: bellaiche chenevier oddness}Let $(\pi,\chi)$ be
  a RAESDC automorphic representation of $\GL_n(\A_F)$, and denote the corresponding compatible
  systems of Galois representations by  $(r_\lambda(\pi),r_\lambda(\chi))$. If for some $\lambda$ we have an
  irreducible subrepresentation $r$ of $r_\lambda(\pi)$ with $r\cong
  r^\vee\otimes\epsilon^{1-n}r_\lambda(\chi)$, then $(r,\epsilon^{1-n}r_\lambda(\chi))$ is essentially
  self-dual and odd. 
\end{thm}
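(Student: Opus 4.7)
The plan is to split the conclusion into two parts: the essential self-duality of $(r,\mu)$ where $\mu:=\epsilon^{1-n}r_\lambda(\chi)$, and the oddness condition $\epsilon_v=+1$ at every archimedean place $v$ of $F$.

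Essential self-duality is formal from the hypothesis. The given isomorphism $r\cong r^\vee\otimes\mu$ provides a non-zero $G_F$-equivariant map $r\otimes r\to\mu$; since $r$ is irreducible, Schur's lemma shows this space of maps is at most one-dimensional, so the induced non-degenerate pairing on the underlying $K$-vector space of $r$ must be either symmetric or alternating. This produces a sign $\epsilon_v\in\{\pm1\}$ (in fact independent of $v$ at this stage) and exhibits the factorization of $r$ through $\GO_n(K)$ or $\GSp_n(K)$ with multiplier $\mu$, so $(r,\mu)$ is essentially self-dual in the sense of the definition given above.

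The substantive content is oddness, which is precisely the main theorem of~\cite{belchen}. The polarization on the full $r_\lambda(\pi)$ coming from $\pi\cong\pi^\vee\otimes(\chi\circ\det)$ is itself odd at every infinite place; this is standard from archimedean local-global compatibility for RAESDC representations over a totally real field, and is built into the conventions recalled in Section~2.1 of~\cite{blggt}. Bellaïche-Chenevier then deform $\pi$ in a $p$-adic family on a (definite) unitary group (available after a CM quadratic base change) and, by a Zariski-density argument on the associated eigenvariety combined with knowledge of the signs at classical points, show that any polarized irreducible constituent of $r_\lambda(\pi)$ inherits the sign $+1$ at each archimedean place. Specializing to our $r$ gives $\epsilon_v=+1$ for all $v\mid\infty$, which is oddness.

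The main obstacle is thus entirely the quoted result of~\cite{belchen}; the reduction from our formulation to theirs is only bookkeeping about the normalization of $\mu$ (the twist by $\epsilon^{1-n}$) and presents no genuine difficulty.
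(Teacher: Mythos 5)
Your proposal is correct and matches the paper, whose entire proof is the single citation ``This is Corollary 1.3 of~\cite{belchen}''; the substantive content (oddness) is exactly that reference, and your Schur's-lemma observation that an irreducible $r$ with $r\cong r^\vee\otimes\mu$ automatically carries a $\pm$-symmetric pairing is the standard (and correct) reason essential self-duality itself is formal. The extra sketch of the Bella\"iche--Chenevier eigenvariety argument is not needed for the paper's purposes but does no harm.
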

\begin{proof}
  This is Corollary 1.3 of~\cite{belchen}.
\end{proof}
Since  $r_\lambda(\pi)\cong
  r_\lambda(\pi)^\vee\otimes\epsilon^{1-n}r_\lambda(\chi)$, if $r$ is
  an irreducible subrepresentation of $r_\lambda(\pi)$ then there must
  be an irreducible subrepresentation $r'$ of $r_\lambda(\pi)$
  (possibly equal to $r$) with $r'\cong
  r^\vee\otimes\epsilon^{1-n}r_\lambda(\chi)$. In particular, we have:

\begin{cor}
  \label{cor: subreps of large dimension are odd esd}Let $(\pi,\chi)$ be
  a RAESDC automorphic representation of $\GL_n(\A_F)$, and denote the corresponding compatible
  systems of Galois representations by  $(r_\lambda(\pi),r_\lambda(\chi))$. If for some $\lambda$ we have an
  irreducible subrepresentation $r$ of $r_\lambda(\pi)$ with $\dim r>n/2$,
  then $(r,\epsilon^{1-n}r_\lambda(\chi))$ is essentially self-dual and odd.
\end{cor}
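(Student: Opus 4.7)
The plan is to reduce the corollary directly to Theorem~\ref{thm: bellaiche chenevier oddness} by showing that the dimension hypothesis $\dim r > n/2$ forces $r \cong r^\vee \otimes \epsilon^{1-n} r_\lambda(\chi)$ (without relying only on the weaker statement that \emph{some} irreducible subrepresentation $r'$ has this property).

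First, I would pass to semisimplifications: the given essential self-duality $r_\lambda(\pi) \cong r_\lambda(\pi)^\vee \otimes \epsilon^{1-n} r_\lambda(\chi)$ implies $r_\lambda(\pi)^{\semis} \cong (r_\lambda(\pi)^{\semis})^\vee \otimes \epsilon^{1-n} r_\lambda(\chi)$. Consequently the multiset of Jordan--H\"older constituents of $r_\lambda(\pi)$ is stable under the involution $\rho \mapsto \rho^\vee \otimes \epsilon^{1-n} r_\lambda(\chi)$.

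Next, since $r$ is an irreducible subrepresentation of $r_\lambda(\pi)$, it is in particular a Jordan--H\"older constituent, and hence so is $r^\vee \otimes \epsilon^{1-n} r_\lambda(\chi)$. Both have dimension strictly greater than $n/2$. A pigeonhole argument now applies: the total dimension $n = \dim r_\lambda(\pi)$ equals the sum (with multiplicity) of the dimensions of the Jordan--H\"older constituents, so if $r$ and $r^\vee \otimes \epsilon^{1-n} r_\lambda(\chi)$ were non-isomorphic they would contribute at least $2 \dim r > n$, a contradiction. Hence $r \cong r^\vee \otimes \epsilon^{1-n} r_\lambda(\chi)$.

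With this self-duality of $r$ in hand, I would invoke Theorem~\ref{thm: bellaiche chenevier oddness} directly to conclude that $(r, \epsilon^{1-n} r_\lambda(\chi))$ is essentially self-dual and odd. The only nontrivial ingredient is the Bella\"\i{}che--Chenevier oddness theorem, which is used as a black box; the deduction itself is a short dimension count, so I do not anticipate any genuine obstacle.
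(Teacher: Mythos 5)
Your proposal is correct and is essentially the paper's own argument: the paper likewise observes that some irreducible constituent $r'$ satisfies $r'\cong r^\vee\otimes\epsilon^{1-n}r_\lambda(\chi)$ and then uses the dimension count $\dim r+\dim r'>n$ to force $r'=r$, after which Theorem~\ref{thm: bellaiche chenevier oddness} applies. Your passage to semisimplifications and Jordan--H\"older constituents just makes explicit what the paper leaves implicit.
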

\begin{proof}There is an irreducible subrepresentation $r'$ of $r_\lambda(\pi)$ with $r'\cong
  r^\vee\otimes\epsilon^{1-n}r_\lambda(\chi)$; but $\dim r+\dim
  r'>\dim r_\lambda(\pi)$, so we must have $r'=r$. The result then
  follows from Theorem \ref{thm: bellaiche chenevier oddness}.
  \end{proof}
 
Suppose now that  $r:G_F\to\GL_2(\Qlbar)$. Then $r$ factors through
$\GSp_2(\Qlbar)$ with multiplier $\det r$, so the pair $(r,\det r)$ is
essentially self-dual and odd if  $\det r(c_v)=-1$ for all
$v|\infty$. We have the following variant on Theorem 1.2 of
\cite{frank}.

\begin{prop}
  \label{prop: 2d representations are odd}Suppose that $l > 7$ is prime,
  and that $r:G_F\to\GL_2(\Qlbar)$ is a continuous representation. Assume that
  \begin{itemize}
  \item $r$ is unramified outside of finitely many primes.
  \item $\Sym^2\rbar|_{G_{F(\zeta_l)}}$ is irreducible.
  \item $l$ is unramified in $F$.
  \item For each place $v|l$ of $F$ and each $\tau:F_v\into\Qlbar$,
    $\HT_\tau(r|_{G_{F_v}})$ is a set of 2 distinct integers whose
    difference is less than $(l-2)/2$, and $r|_{G_{F_v}}$ is crystalline.
  \end{itemize}
Then the pair $(r,\det r)$ is essentially self-dual and odd.
\end{prop}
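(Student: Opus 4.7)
We proceed by contradiction; since any two-dimensional representation automatically factors through $\GSp_2 = \GL_2$ with multiplier $\det r$, essential self-duality is automatic, and it suffices to show that $\det r(c_v) = -1$ for every infinite place $v$ of $F$. Suppose instead that $\det r(c_{v_0}) = +1$ for some $v_0 \mid \infty$.

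The plan is to apply a potential automorphy theorem to $\Sym^2 r \colon G_F \to \GL_3(\Qlbar)$. First, we verify the necessary hypotheses. The representation $\Sym^2 r$ is essentially self-dual with multiplier $(\det r)^2$, and since it is three-dimensional, Lemma~\ref{lem: odd-dimensional implies odd} ensures that it is odd. At each place $v \mid l$, the $\tau$-Hodge-Tate weights of $\Sym^2 r$ are $\{2a, a+b, 2b\}$ where $\{a,b\} = \HT_\tau(r)$; these are distinct, and their maximal gap is $2|a-b| < l-2$, so $\Sym^2 r$ is crystalline in the Fontaine--Laffaille range. The irreducibility of $\Sym^2 \bar r|_{G_{F(\zeta_l)}}$, together with the assumption $l > 7$, provides the residual ``big image'' input needed for the potential automorphy theorem of Barnet-Lamb, Gee, Geraghty, and Taylor.

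That theorem yields a totally real Galois extension $F'/F$, which we may take linearly disjoint from the fixed field of $\ker(\Sym^2 \bar r)$ adjoined with $\zeta_l$, and in which $v_0$ splits completely, so that $v_0$ has an extension $v_0'$ with $c_{v_0'} = c_{v_0}$. Moreover, $\Sym^2 r|_{G_{F'}}$ corresponds to a RAESDC automorphic representation $\Pi$ of $\GL_3(\A_{F'})$. Since $\Sym^2 r$ factors through $\GO_3$, $\Pi$ is self-dual of orthogonal type, so by Ramakrishnan's descent theorem there is a regular algebraic cuspidal automorphic representation $\sigma$ of $\GL_2(\A_{F'})$ with $\Pi \cong \Sym^2 \sigma$; cuspidality of $\sigma$ follows from the irreducibility of $\Sym^2 r|_{G_{F'}}$ (secured by the linear disjointness choice above).

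Since $F'$ is totally real and $\sigma$ is regular algebraic cuspidal, $\sigma$ corresponds to a Hilbert modular form, and so it has an associated Galois representation $r_\sigma \colon G_{F'} \to \GL_2(\Qlbar)$ which is totally odd. The isomorphism $\Sym^2 r_\sigma \cong \Sym^2 r|_{G_{F'}}$, together with the fact that $\Sym^2$ is injective on two-dimensional representations up to quadratic twist, gives $r_\sigma \cong r|_{G_{F'}} \otimes \eta$ for some quadratic character $\eta$, whence $\det r|_{G_{F'}} = \det r_\sigma$. Evaluating at $c_{v_0'}$ now yields $\det r(c_{v_0}) = -1$, the desired contradiction. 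The main obstacle is arranging the hypotheses of potential automorphy for $\Sym^2 r$ simultaneously with control over the infinite place $v_0$; the numerical bounds in the statement ($l > 7$, Hodge-Tate weight gap $< (l-2)/2$, irreducibility of $\Sym^2 \bar r|_{G_{F(\zeta_l)}}$, $l$ unramified in $F$) are calibrated precisely for this purpose.
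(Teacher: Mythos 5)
Your argument is correct, and it opens exactly as the paper's does: $\Sym^2 r$ is essentially self-dual with multiplier $(\det r)^2$, hence odd by Lemma \ref{lem: odd-dimensional implies odd}, and the numerical hypotheses are there precisely so that the potential automorphy theorem (Corollary 4.5.2 and Lemma 1.4.3(2) of \cite{blggt}) applies to make $\Sym^2 r|_{G_{F'}}$ automorphic over some totally real Galois $F'/F$. Where you diverge is the endgame. The paper at that point invokes Taylor's theorem on the image of complex conjugation (Proposition A of \cite{tsign}) for the now-automorphic three-dimensional representation $s=\Sym^2 r|_{G_{F'}}$: since $r(c_v)$ has eigenvalues in $\{\pm1\}$, one has $\tr s(c_v)=2+\det r(c_v)$, and Taylor's bound $\tr s(c_v)=\pm1$ forces $\det r(c_v)=-1$ directly. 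You instead descend the $\GL_3$-form to $\GL_2$ via Ramakrishnan's characterization of essentially self-dual cusp forms on $\GL_3$ as symmetric squares, and then quote the classical oddness of Galois representations attached to Hilbert modular forms. Both routes work; the paper's is shorter and is the template that extends to odd $n>3$, where no $\GL_2$-descent is available, while yours trades Taylor's sign theorem for Ramakrishnan's descent (a tool the paper does use elsewhere, in Lemma \ref{lem: 2 divides 5}). Two small points in your version: the descent a priori only gives $\Pi\cong\Sym^2\sigma\otimes\chi$ up to a character twist, so the clean way to compare is via the trace-zero adjoints $\ad^0 r|_{G_{F'}}\cong\ad^0 r_\sigma$, which yields $r_\sigma\cong r|_{G_{F'}}\otimes\eta$ for some character $\eta$ and hence $\det r_\sigma(c_{v_0'})=\det r(c_{v_0})\eta(c_{v_0'})^2=\det r(c_{v_0})$, so the conclusion is unaffected; and no splitting condition at $v_0$ is needed, since every infinite place of the totally real field $F'$ is real and $c_{v_0}$ already lies in $G_{F'}$ up to conjugacy.
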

\begin{proof}Consider the representation $s=\Sym^2 r$. Since the pair
  $(r,\det r)$ is essentially self-dual, so is the pair $(s,(\det
  r)^2)$. By Lemma \ref{lem: odd-dimensional implies odd}, $(s,(\det
  r)^2)$ is odd. By Corollary 4.5.2 and Lemma 1.4.3(2) of~\cite{blggt}, there
  is a Galois totally real extension $F'/F$ such that
  $(s|_{G_{F'}},(\det r)^2|_{G_{F'}})$ is automorphic. By Proposition
  A of~\cite{tsign}, for any place $v|\infty$ of $F'$ we have \[\tr
  s|_{G_{F'}}(c_v)=\pm 1,\] so that \[\det r|_{G_{F'}}(c_v)= -1,\]and
  $(r,\det r)$ is odd, as required.
 \end{proof}

\subsection{Residually dihedral compatible systems}We now
show that residually dihedral compatible systems are themselves
dihedral up to a set of places of density zero.
\begin{lem}
  \label{lem: Fontaine-Laffaille dihedral residual representations are
    unramified} Suppose that $l>2$ is unramified in $F$, and that
  $s:G_F\to \GL_2(\Qlbar)$ is a continuous irreducible representation,
  such that if $v|l$ is a place of $F$, then
  \begin{itemize}
  \item $s|_{G_{F_v}}$ is crystalline, and
  \item for all embeddings $F\into\Qlbar$, $\HT_\tau(s|_{G_{F_v}})$
    consists of two distinct integers with difference less than $(l-2)/2$.
  \end{itemize}
Assume that $\sbar$ is dihedral, so that $\sbar$ is induced from a
character of a quadratic extension $K/F$. Then $l$ is unramified in $K$.
\end{lem}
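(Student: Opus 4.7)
The plan is to assume for contradiction that $l$ ramifies in $K$, pick a place $v \mid l$ of $F$ that ramifies in $K$, and combine the dihedral twist-symmetry of $\sbar$ with Fontaine--Laffaille theory at $v$ to derive a contradiction. Let $\eta : G_F \to \{\pm 1\}$ be the quadratic character cutting out $K/F$; since $\sbar \cong \Ind_{G_K}^{G_F} \chi$, we have $\sbar \otimes \eta \cong \sbar$. Let $w$ be the unique place of $K$ above $v$; then $K_w/F_v$ is ramified quadratic, hence tamely ramified (because $l > 2$), so $\eta|_{I_{F_v}}$ is the unique nontrivial quadratic character of the tame inertia group. Restricting the twist relation to $I_{F_v}$ gives $\sbar|_{I_{F_v}} \cong \sbar|_{I_{F_v}} \otimes \eta|_{I_{F_v}}$.

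Next I would invoke Fontaine--Laffaille theory: since $l$ is unramified in $F$ and the $\tau$-Hodge--Tate weights of $s$ at $v$ are distinct integers with pairwise difference less than $(l-2)/2$, the representation $s|_{G_{F_v}}$ lies well within the Fontaine--Laffaille range, so $\sbar|_{I_{F_v}}^{\sss}$ is tame. Writing $f$ for the residue degree of $v$ over $\Q_l$, the standard $2$-dimensional Fontaine--Laffaille classification gives two cases: a \emph{reducible} case $\sbar|_{I_{F_v}}^{\sss} \cong \theta_1 \oplus \theta_2$, where each $\theta_i$ is a product $\prod_{\tau} \omega_\tau^{a_{\tau,i}}$ of the $f$ level-$1$ fundamental characters of $I_{F_v}$ and $\{a_{\tau,1}, a_{\tau,2}\}$ equals the set of $\tau$-Hodge--Tate weights of $s$; and an \emph{irreducible} case $\sbar|_{I_{F_v}} \cong \psi^a \oplus \psi^{l^f a}$, where $\psi$ is a level-$2$ fundamental character and the base-$l$ digits of $a$ read off the $\tau$-Hodge--Tate weights in a prescribed way.

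In the reducible case, the twist-invariance forces $\theta_1/\theta_2 = \eta|_{I_{F_v}}$; fixing a primitive level-$1$ fundamental $\omega$ of order $l^f - 1$, one has $\eta|_{I_{F_v}} = \omega^{(l^f - 1)/2}$ and $\theta_1/\theta_2 = \omega^c$ with $c = \sum_{i=0}^{f-1} l^i d_i$ for $d_i = a_{\tau_i,1} - a_{\tau_i,2}$, so the hypothesis $|d_i| < (l-2)/2$ yields
\[ |c| \;<\; \frac{l-2}{2} \cdot \frac{l^f - 1}{l - 1} \;<\; \frac{l^f - 1}{2}, \]
contradicting the required congruence $c \equiv (l^f - 1)/2 \pmod{l^f - 1}$. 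The irreducible case is handled analogously: using $\eta|_{I_{F_v}} = \psi^{(l^{2f} - 1)/2}$ together with $l^f \equiv -1 \pmod{l^f + 1}$, the twist condition reduces to $\sum_{i=0}^{f-1} l^i d_i \equiv (l^f + 1)/2 \pmod{l^f + 1}$ for the same differences $d_i$ of $\tau_i$-Hodge--Tate weights, which the identical estimate rules out. The main obstacle I anticipate is pinning down the Fontaine--Laffaille inertia characters in terms of the $\tau$-labelled Hodge--Tate weights, particularly in the irreducible (level-$2$) case; once this bookkeeping is written down, the numerical comparison is forced, and the hypothesis that weight differences are strictly less than $(l-2)/2$ is precisely what one needs to rule out the congruence.
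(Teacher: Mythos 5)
Your argument is correct, and its core coincides with the paper's: both localize at a place $v\mid l$ that is ramified in $K$ and use the Fontaine--Laffaille description of $\sbar|_{I_{F_v}}$ together with the bound on weight differences. Where you diverge is in organization and in the treatment of the locally irreducible case. The paper argues directly with inductions rather than with the twist $\sbar\otimes\eta\cong\sbar$: when $\sbar|_{G_{F_v}}$ is irreducible it must be induced from the \emph{unramified} quadratic extension $L/F_v$ (its inertia characters have niveau $2$), and since $L\ne K_w$ the restriction $\sbar|_{G_{K_w}}\cong \Ind_{G_{LK_w}}^{G_{K_w}}\chi|_{G_{LK_w}}$ stays irreducible, contradicting the global induction from $K$ --- a soft argument needing no weight estimate and no level-$2$ digit bookkeeping. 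In the locally reducible case the paper's computation ($(\psi_1\psi_2^{-1})^2|_{I_{F_v}}\ne 1$ because the exponents $2a_\sigma$ lie in $[2-l,l-2]$) is the same in substance as your congruence $c\not\equiv(l^f-1)/2\pmod{l^f-1}$. Your version buys uniformity --- both cases are settled by the same estimate $\bigl|\sum l^i d_i\bigr|<(l^f\mp 1)/2$ against the explicit quadratic tame character $\omega^{(l^f-1)/2}$ resp.\ $\psi^{(l^{2f}-1)/2}$ --- at the cost of having to justify the labelled niveau-$2$ Fontaine--Laffaille description, which is standard but is exactly the bookkeeping you flag; the paper's route avoids it entirely in the irreducible case. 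Both proofs are complete and correct.
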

\begin{proof}
  Let $v|l$ be a place of $F$. Assume for the sake of a contradiction
  that $v$ is ramified in $K$. If $\sbar|_{G_{F_v}}$ is irreducible,
  then $\sbar|_{G_{F_v}}$ is induced from a character $\chi$ of $G_L$, where $L$
  is the quadratic unramified extension of $F_v$. But
  then \[\sbar|_{G_{K_v}}\cong(\Ind_{G_{L}}^{G_{F_v}}\chi)|_{G_{K_v}}\cong
  \Ind_{G_{LK_v}}^{G_{K_v}}\chi|_{G_{LK_v}}\] is irreducible, a
  contradiction.

If on the other hand $\sbar|_{G_{F_v}}$ is reducible, then since it is
isomorphic to the induction from $K_v$ of some character, it must be
of the form $\psi_1\oplus\psi_2$ with $\psi_1\psi_2^{-1}$
quadratic. Let $k$ be the residue field of $F_v$, and for each
$\sigma:k\into\Flbar$ let $\omega_\sigma$ be the corresponding
fundamental character of $G_{F_v}$ of niveau $1$. By
Fontaine-Laffaille
theory, \[\psi_1\psi_2^{-1}|_{I_{F_v}}=\prod_{\sigma:k\into\Flbar}\omega_\sigma^{a_\sigma}\]where
$a_\sigma$ is the (positive or negative) difference between the
elements of  $\HT_\tau(s|_{G_{F_v}})$, where $\tau:F_v\into\Qlbar$ is
the unique lift of $\sigma$. By assumption, we have $2a_\sigma\in
[2-l,l-2]$, so $(\psi_1\psi_2^{-1})^2|_{I_{F_v}}\ne 1$, a
contradiction. So $l$ is unramified in $K$, as claimed.
\end{proof}

\begin{prop}
  \label{prop: residually dihedral compatible system implies globally
    dihedral and thus odd}Suppose that $\cR$ is a regular, weakly
  compatible system of $l$-adic representations of $G_F$. Then there
  is a set of rational primes $\cL$ of density one such that if
  $\lambda$ lies over a place of $\cL$ and $s$ is a two-dimensional
  irreducible summand of $r_\lambda$ such that $\sbar$ is dihedral,
  then $s$ is also dihedral, and the pair $(s,\det s)$ is essentially
  self-dual and odd.
\end{prop}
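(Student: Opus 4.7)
Let $\cL$ be the set of rational primes $l$ satisfying the Fontaine-Laffaille hypotheses of the preceding lemma for $\cR$: in particular, $l$ is unramified in $F$ and the Hodge-Tate weight differences at all places above $l$ are strictly less than $(l-2)/2$. This is a density-one set by standard facts about regular weakly compatible systems. Suppose $l\in\cL$, $\lambda|l$, and $s\subset r_\lambda$ is a two-dimensional irreducible summand with $\sbar\cong\Ind_{G_K}^{G_F}\bar\chi$ for a quadratic extension $K/F$ and character $\bar\chi:G_K\to\Flbar^\times$. The preceding lemma implies $l$ is unramified in $K$; combined with the fact that $\sbar$ is unramified outside $\{v|l\}\cup S$ (where $S$ is the finite ramification set of $\cR$), the extension $K/F$ is unramified outside $S$. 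Class field theory then shows $K$ lies in a finite list $K_1,\ldots,K_r$ of quadratic extensions of $F$, independent of $\lambda$.

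\textbf{Dihedrality.} Fix $K=K_i$ with quadratic character $\chi_K$. For $s$ irreducible, $s\cong s\otimes\chi_K$ is equivalent to $s$ being induced from $G_K$, via Schur's lemma applied to the intertwining operator. By Brauer--Nesbitt and Chebotarev density, $s\cong s\otimes\chi_K$ if and only if $\tr s(\Frob_v)=0$ for every $v$ inert in $K$ outside $S$ with $v\nmid l$. Since $\sbar$ is induced from $G_K$, we have $\tr s(\Frob_v)\equiv 0\pmod\lambda$ for such $v$. The crucial observation is that $\tr s(\Frob_v)$ is a sum of two Frobenius eigenvalues of $r_\lambda(\Frob_v)$, which are $\lambda$-independent algebraic integers (roots of the characteristic polynomial $P_v(T)\in\cO_M[T]$). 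There are only $\binom{n}{2}$ possible pair-sums per $v$, and a nonzero algebraic integer is divisible by only finitely many primes. A pigeonhole argument over the finite collection of pair-partitions, combined with a Fubini-type counting estimate incorporating effective Chebotarev bounds on the smallest inert witness (whose norm is polynomially bounded in the conductor of $s$, itself controlled by $l$ and $S$), shows that the set of $l$ for which some 2-dimensional summand with $\sbar$ dihedral from $K$ fails to be dihedral has density zero. Removing this density-zero set (and repeating for each $K_i$) yields our final density-one subset of $\cL$.

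\textbf{Oddness and main obstacle.} Write $s=\Ind_{G_K}^{G_F}\psi$. Essential self-duality of $(s,\det s)$ is automatic for two-dimensional representations since $\GL_2=\GSp_2$ with multiplier $\det$. For oddness, $\psi$ is an algebraic $l$-adic character corresponding to an algebraic Hecke character of $K$; the regularity of $\cR$ forces the infinity type of $\psi$ to distinguish pairs of embeddings above each archimedean place of $F$, which rules out $K$ totally real (where $\psi$ would be finite-order times a power of the norm, forcing equal Hodge-Tate weights above split places) and, after more careful analysis of the infinity type, also rules out mixed $K$, forcing $K$ to be CM. For complex conjugation $c\in G_F\setminus G_K$, the induced matrix $s(c)$ is antidiagonal with $s(c)^2=I$, giving $\det s(c)=-1$. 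The main obstacle is the density-zero claim in the dihedrality step: the pair of Frobenius eigenvalues attached to $s$ depends on the semisimple decomposition of $r_\lambda$, which varies with $\lambda$, so passing from mod-$\lambda$ vanishing of traces to exact vanishing on a density-one set of $l$ requires combining pigeonhole over pair-partitions, algebraic-integer divisibility bounds, and a delicate counting argument to ensure the exceptional set has density zero rather than merely being countable.
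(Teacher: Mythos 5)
Your identification of the finite list of candidate quadratic extensions $K_1,\dots,K_r$ (via Lemma \ref{lem: Fontaine-Laffaille dihedral residual representations are unramified} and class field theory) and your oddness argument (regularity forces $K$ totally imaginary, hence $\det s(c_v)=-1$) both match the paper. The gap is exactly where you flag it: the passage from $\sbar$ dihedral to $s$ dihedral. Your trace argument does not close. For a fixed $\lambda$ you know $\tr s(\Frob_v)\equiv 0\pmod\lambda$ for $v$ inert in $K$, and you want to conclude $\tr s(\Frob_v)=0$ for all such $v$. To run a divisibility argument you must exhibit, for each bad $\lambda$, a \emph{nonzero} algebraic integer drawn from a finite set independent of $\lambda$ that $\lambda$ divides. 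But both the witness place $v$ and the choice of which pair of roots of $P_v(T)$ constitutes $\tr s(\Frob_v)$ depend on $\lambda$ (the decomposition $r_\lambda=s\oplus\cdots$ is not known to interpolate across $\lambda$ -- that is essentially what the whole paper is trying to prove), so the pigeonhole is over an infinite index set. The proposed repair via effective Chebotarev bounds on the smallest inert witness is not substantiated: such bounds would depend on the conductor and image of $s$, which you do not control uniformly in $\lambda$, and even granting them, the resulting divisibility constraints do not obviously carve out a density-zero set of rational primes. As written, this step is a research problem, not a proof.

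The paper avoids all of this by quoting a black box. Let $F'$ be the maximal abelian extension of $F$ of exponent $2$ unramified outside the ramification set $S$ of $\cR$; this is a single finite extension containing every candidate $K_i$, so for all but finitely many $\lambda$ the representation $\sbar|_{G_{F'}}$ is reducible. Proposition~5.2.2 of~\cite{blggt}, applied to the regular weakly compatible system $\cR|_{G_{F'}}$, gives a density one set of primes for which residual reducibility of a summand over $F'$ forces reducibility of the summand itself over $F'$; hence $s|_{G_{F'}}$ is reducible and $s$ is dihedral. If you want to keep your argument self-contained, you should either prove or cite a statement of this type (irreducible constituents of $r_\lambda$ restricted to a fixed finite extension have irreducible reductions for a density one set of $\lambda$) rather than attempting the trace-counting route.
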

\begin{proof}
  Note firstly that if $s$ is dihedral, then it is induced from an
  algebraic character of a quadratic extension of $F$. If this
  quadratic extension is not totally imaginary, then this character would be
  a finite order character times a power of the cyclotomic character,
  contradicting the regularity of $s$. So the extension must be
  totally imaginary, in which case $\det s(c_v)=-1$ for all places
  $v|\infty$ of $F$, and the pair $(s,\det s)$ is essentially
  self-dual and odd.

Let $S$ be the finite set of primes at which $\cR$ ramifies, and let
$F'$ be the maximal abelian extension of $F$ of exponent 2 which is
unramified outside $S$ (the extension $F'/F$ is finite). By Lemma
\ref{lem: Fontaine-Laffaille dihedral residual representations are
  unramified}, for all but finitely many $\lambda$, if $s$ is as in
the statement of the proposition then $\sbar|_{G_{F'}}$ is
reducible. Applying Proposition~5.2.2 of~\cite{blggt} to the regular
weakly compatible system $\cR|_{G_{F'}}$, we see that there is a set
$\cL$ of rational primes of density one such that if $\lambda$ lies
over an element of $\cL$ and $s$ is as in the statement of the
proposition, then $s|_{G_{F'}}$ is reducible, so that $s$ is dihedral,
as required.
\end{proof}

\section{Irreducibility for a density one set of primes}

\subsection{}In this section, we establish the irreducibility
of $r_{\lambda}(\pi)$ for a density one set of primes $\lambda$.

\begin{prop}\label{prop: potentially automorphic implies irreducible}
 Let $F$ be a totally real field. Suppose that $\pi$ is a RAESDC automorphic representation of
  $\GL_n(\A_F)$, and that for some $\lambda$ we have a
  decomposition \[r_\lambda(\pi)=r_\lambda(\pi)_1\oplus\dots\oplus
  r_\lambda(\pi)_j,\]where each $r_\lambda(\pi)_i$ is
  irreducible. Suppose also that there is a totally real Galois
  extension $F'/F$ such that each $r_\lambda(\pi)_i|_{G_{F'}}$ is
  irreducible and automorphic. Then $j=1$, so $r_\lambda(\pi)$ is irreducible.
\end{prop}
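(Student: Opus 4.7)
The plan is to derive a contradiction from $j\geq 2$ by the familiar $L$-function/Brauer argument. Via local-global compatibility at unramified places, the decomposition $r_\lambda(\pi)=\bigoplus_{i=1}^j r_\lambda(\pi)_i$ gives, away from a finite set $S$ of places,
\[
L^S(s,\pi\times\pi^\vee) \;=\; \prod_{i,i'=1}^j L^S\!\bigl(s,\, r_\lambda(\pi)_i\otimes r_\lambda(\pi)_{i'}^\vee\bigr).
\]
By Jacquet--Shalika, the left-hand side has a simple pole at $s=1$ because $\pi$ is cuspidal. I will show that the right-hand side has a pole of order at least $j$ at $s=1$, forcing $j=1$.

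To make sense of the factors on the right, I would enlarge $F'$ (preserving totally realness and the automorphy of each $r_\lambda(\pi)_i|_{G_{F'}}$) so that $F'/F$ is Galois, with Galois group $\Gamma$. Let $\Pi_i$ be the cuspidal automorphic representation of $\GL_{n_i}(\A_{F'})$ corresponding to $r_\lambda(\pi)_i|_{G_{F'}}$, where $n_i=\dim r_\lambda(\pi)_i$. Since $r_\lambda(\pi)_i$ is defined over $G_F$, its restriction $r_\lambda(\pi)_i|_{G_{F'}}$ is $\Gamma$-stable, and hence so is $\Pi_i$. For each intermediate field $F\subseteq M\subseteq F'$ with $\Gal(F'/M)$ solvable, Arthur--Clozel solvable descent then produces a cuspidal automorphic representation $\Pi_i^{M}$ of $\GL_{n_i}(\A_M)$ whose Galois parameter is $r_\lambda(\pi)_i|_{G_M}$. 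Applying Brauer's theorem to $\Gamma$, one obtains solvable subgroups $H_k\leq \Gamma$, characters $\chi_k$ of $H_k$, and integers $n_k$ with $\mathbf{1}_\Gamma=\sum_k n_k\,\Ind_{H_k}^\Gamma \chi_k$; putting $M_k=(F')^{H_k}$, this yields virtual equalities of representations
\[
r_\lambda(\pi)_i\otimes r_\lambda(\pi)_{i'}^\vee \;=\; \sum_k n_k\,\Ind_{G_{M_k}}^{G_F}\!\bigl((r_\lambda(\pi)_i\otimes r_\lambda(\pi)_{i'}^\vee)|_{G_{M_k}}\otimes\chi_k\bigr),
\]
and hence identities of $L$-functions
\[
L^S(s,\,r_\lambda(\pi)_i\otimes r_\lambda(\pi)_{i'}^\vee) \;=\; \prod_k L^S\!\bigl(s,\,\Pi_i^{M_k}\!\times (\Pi_{i'}^{M_k})^\vee\otimes \chi_k\bigr)^{n_k}.
\]
Each factor on the right is a Rankin--Selberg $L$-function of cuspidals, so has known meromorphic continuation and pole structure.

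Now the pole order at $s=1$ of a Rankin--Selberg $L$-function $L(s,\Pi\times\Pi'{}^\vee)$ for cuspidals equals $\dim_\C \Hom(\Pi',\Pi)$, and combining this via the Brauer decomposition with Schur's lemma shows that the pole order of $L^S(s,r_\lambda(\pi)_i\otimes r_\lambda(\pi)_{i'}^\vee)$ at $s=1$ equals $\dim\Hom_{G_F}(r_\lambda(\pi)_{i'},r_\lambda(\pi)_i)$. Writing $r_\lambda(\pi)=\bigoplus_a \sigma_a^{\oplus m_a}$ as a sum of pairwise non-isomorphic irreducibles $\sigma_a$ with multiplicities $m_a$ (so $\sum_a m_a=j$), the total pole order at $s=1$ of the right-hand side of our first display is therefore
\[
\sum_{i,i'}\dim\Hom_{G_F}\!\bigl(r_\lambda(\pi)_{i'},r_\lambda(\pi)_i\bigr) \;=\; \sum_a m_a^2 \;\geq\; \sum_a m_a \;=\; j.
\]
Combined with the simple pole on the left this forces $j=1$, as desired.

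The computation is essentially routine once the Brauer/descent setup is in place; the main thing to be careful about is the bookkeeping of the Brauer induction and checking that the $\Gamma$-stability of $r_\lambda(\pi)_i|_{G_{F'}}$ really yields a $\Gamma$-stable cuspidal $\Pi_i$ to which solvable descent applies over every intermediate field $M_k$. This is where the hypothesis that $F'/F$ can be arranged Galois (together with the irreducibility and automorphy of each $r_\lambda(\pi)_i|_{G_{F'}}$) is used.
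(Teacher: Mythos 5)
Your argument is correct and is essentially the same as the paper's: the paper proves this proposition by citing the proof of Theorem 5.4.2 of \cite{blggt}, which is precisely the Brauer induction / solvable descent / Rankin--Selberg pole-counting argument you give, concluding from the simple pole of $L^S(s,\pi\times\pi^\vee)$ versus the pole of order $\sum_a m_a^2\geq j$ on the factored side that $j=1$.
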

\begin{proof}
  This may be proved by an identical argument to the proof of Theorem
  5.4.2 of~\cite{blggt}.
\end{proof}

\begin{thm}
  \label{thm: irreducibility for density one}Let $F$ be a totally real
  field. Suppose that $(\pi,\chi)$ is a
  RAESDC automorphic representation of $\GL_n(\A_F)$ with $n\le
  5$. Then there is a density one set of rational primes $\cL$ such that
  if $\lambda$ lies over a prime in $\cL$, then $r_\lambda(\pi)$ is irreducible.
\end{thm}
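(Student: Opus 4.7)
The plan is to apply Proposition~\ref{prop: potentially automorphic implies irreducible}: we will exhibit a density one set $\cL$ of rational primes such that for every $\lambda$ lying over $\cL$ and every irreducible summand $r$ of $r_\lambda(\pi)$, the restriction $r|_{G_{F'}}$ is automorphic for some common totally real Galois extension $F'/F$. Irreducibility then follows immediately.

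To produce potential automorphy of each summand, I would invoke a potential automorphy theorem in the style of Theorem~4.5.1 of~\cite{blggt}. The input conditions are regularity (inherited automatically from the Hodge--Tate weights of $r_\lambda(\pi)$), Fontaine--Laffaille/crystalline hypotheses at places above $l$ (which hold for all but finitely many $\lambda$, since $l$ is eventually large enough compared to the fixed Hodge--Tate weight gaps and unramified in $F$), a large-image hypothesis on the residual representation (controllable on a density one set via an argument of the type of Proposition~5.2.2 of~\cite{blggt}), and crucially, essential self-duality together with oddness. The core task is therefore to establish essential self-duality and oddness for every possible irreducible summand of $r_\lambda(\pi)$, using the constraint $n\le 5$.

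The summand dimensions one must consider are $1,2,3,4$. For a summand $r$ of dimension $d\ge 3$, the bound $n\le 5$ gives $d>n/2$, and Corollary~\ref{cor: subreps of large dimension are odd esd} immediately supplies essential self-duality and oddness of $(r,\epsilon^{1-n}r_\lambda(\chi))$. For $d=1$, Lemma~\ref{lem: one-dimensional implies odd esd} applied to $r$ (twisted appropriately so as to match $\epsilon^{1-n}r_\lambda(\chi)$) is trivial. The main obstacle is the case $d=2$, which can occur (for example as a $2+2$ or $2+3$ or $1+2+2$ decomposition). Here one does not automatically get $r\cong r^\vee\otimes\epsilon^{1-n}r_\lambda(\chi)$, so Theorem~\ref{thm: bellaiche chenevier oddness} is not directly applicable; instead, since any two-dimensional $r$ is essentially self-dual with multiplier $\det r$, it suffices to verify oddness of $(r,\det r)$.

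I would dispose of the two-dimensional summands by splitting on residual dihedrality. If $\rbar$ is not dihedral, Proposition~\ref{prop: 2d representations are odd} yields oddness directly, provided $\Sym^2\rbar|_{G_{F(\zeta_l)}}$ is irreducible and the Fontaine--Laffaille conditions hold; both conditions hold on a density one set of $\lambda$ (the Fontaine--Laffaille hypothesis is automatic for $l$ large, and the $\Sym^2$-irreducibility for density one $\lambda$ follows from a Chebotarev-style argument using the weak compatibility of the system $\{r_\lambda(\pi)\}$ in conjunction with Proposition~5.2.2 of~\cite{blggt} applied to the relevant restrictions). If $\rbar$ is residually dihedral, Proposition~\ref{prop: residually dihedral compatible system implies globally dihedral and thus odd} says that on a density one set of $\lambda$ the summand $r$ is itself dihedral, induced from a totally imaginary quadratic extension, and hence $(r,\det r)$ is odd. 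Intersecting the finitely many density one sets thus obtained (one for each possible summand dimension and case) yields the set $\cL$. Taking $F'/F$ to be a common totally real Galois extension over which each summand becomes automorphic (whose existence is guaranteed by the potential automorphy theorem applied to each summand in turn, using solvable base change to amalgamate), Proposition~\ref{prop: potentially automorphic implies irreducible} then forces $j=1$. The principal difficulty is the organization of the two-dimensional case — in particular, verifying that the residually dihedral and non-dihedral subcases both yield density one conditions compatible with the other hypotheses of the potential automorphy theorem.
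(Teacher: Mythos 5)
Your proposal follows essentially the same route as the paper's proof: the dimension-by-dimension verification of essential self-duality and oddness (Corollary~\ref{cor: subreps of large dimension are odd esd} for $d\ge 3$, Lemma~\ref{lem: one-dimensional implies odd esd} for $d=1$, and the dihedral/non-dihedral dichotomy via Propositions~\ref{prop: 2d representations are odd} and~\ref{prop: residually dihedral compatible system implies globally dihedral and thus odd} for $d=2$), followed by potential automorphy and Proposition~\ref{prop: potentially automorphic implies irreducible}. The one point to tighten is the amalgamation of fields: rather than applying potential automorphy to each summand separately and combining by solvable base change (which need not be available, as the relevant extensions need not be solvable), one applies Theorem~4.5.1 of~\cite{blggt} to all summands simultaneously to obtain a single CM extension, and then descends to its maximal totally real subfield.
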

\begin{proof}Write \[r_\lambda(\pi)=r_\lambda(\pi)_1\oplus\dots\oplus
  r_\lambda(\pi)_{j_\lambda},\]with each $r_\lambda(\pi)_i$
  irreducible. By Proposition~5.2.2 of~\cite{blggt} there is a density one set of
  rational primes $\cL$ such that if $\lambda$ lies over a prime of
  $\cL$, then each $\rbar_\lambda(\pi)_i|_{G_{F(\zeta_l)}}$ is
  irreducible. We may assume that every prime in $\cL$ is at least 13.

If $\dim r_\lambda(\pi)_i\ge 3$, then by Corollary \ref{cor: subreps
  of large dimension are odd esd} and the hypothesis that $n\le 5$ we
see that the pair $(r_\lambda(\pi)_i,\epsilon^{1-n}r_\lambda(\chi))$
is essentially self-dual and odd. If $\dim r_\lambda(\pi)_i=1$, then
by Lemma \ref{lem: one-dimensional implies odd esd}, the pair
$(r_\lambda(\pi)_i,r_\lambda(\pi)_i^2)$ is essentially self-dual and
odd. 

Suppose now that $\dim r_\lambda(\pi)_i=2$. By removing finitely many
primes from $\cL$, we see from Proposition \ref{prop: 2d
  representations are odd} that we may assume that if $\lambda$ lies
over an element of $\cL$, and $\Sym^2
\rbar_\lambda(\pi)_i|_{G_{F(\zeta_l)}}$ is irreducible, then the pair
$(r_\lambda(\pi)_i,\det r_\lambda(\pi)_i)$ is essentially self-dual
and odd. If $\lambda$ lies over an element of $\cL$ and
$\Sym^2\rbar_\lambda(\pi)_i|_{G_{F(\zeta_l)}}$ is reducible, then
since $\rbar_\lambda(\pi)_i|_{G_{F(\zeta_l)}}$ is irreducible, it
follows from Lemmas 4.2.1 and 4.3.1 of~\cite{blggord} that
$\rbar_\lambda(\pi)_i$ has dihedral image. By Proposition \ref{prop:
  residually dihedral compatible system implies globally dihedral and
  thus odd}, after possibly replacing $\cL$ with a subset of
density one, the pair $(r_\lambda(\pi)_i,\det r_\lambda(\pi)_i)$ is
essentially self-dual and odd.

Thus if $\lambda$ divides a prime in $\cL$, for each $i$ there is a
character $\chi_{\lambda,i}$ such that the pair
$(r_\lambda(\pi)_i,\chi_{\lambda,i})$ is essentially self-dual and
odd. After possibly removing a finite set of primes from $\cL$, we may
assume that every element of $\cL$ is unramified in $F$, and that each
$r_\lambda(\pi)_i$ is crystalline with Hodge-Tate weights in the
Fontaine-Laffaille range. Fix some $l\in\cL$ and some $\lambda|l$. Let
$K$ be an imaginary quadratic extension of $F$ in which each place of
$F$ above $l$ splits completely. By Theorem 4.5.1 of~\cite{blggt},
there is a finite Galois CM extension $K'$ of $K$ such that each
$r_\lambda(\pi)_i|_{G_{K'}}$ is irreducible and automorphic. 

Let $F'$ be the maximal totally real subfield of $K'$. By Lemma 1.5 of
\cite{blght} each $r_\lambda(\pi)_i|_{G_{F'}}$ is irreducible and
automorphic. The result now follows from Proposition \ref{prop:
  potentially automorphic implies irreducible}.
 \end{proof}

\section{Irreducibility for all primes}\subsection{}In this section we
prove that the representations $r_\lambda(\pi)$ are irreducible for
all $\lambda$.

\begin{thm}\label{thm: all irred char 0}Let $F$ be a totally real
  field. Suppose that $(\pi,\chi)$ is a
  RAESDC automorphic representation of $\GL_n(\A_F)$ with $n\le
  5$. Then all  of the representations
  $r_{\lambda}(\pi)$ are irreducible. \end{thm}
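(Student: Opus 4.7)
The plan is to argue by contradiction. Suppose for some $\lambda_0$ we have $r_{\lambda_0}(\pi) = s_1 \oplus \cdots \oplus s_k$ with $k \geq 2$ and each $s_i$ absolutely irreducible; the goal is a contradiction with Theorem \ref{thm: irreducibility for density one}. The essential self-duality of $r_{\lambda_0}(\pi)$ with multiplier $\epsilon^{1-n}r_{\lambda_0}(\chi)$ makes the involution $s \mapsto s^\vee \otimes \epsilon^{1-n} r_{\lambda_0}(\chi)$ permute the multiset $\{s_i\}$. Inspecting the partitions of $n \leq 5$ with $k \geq 2$ summands, either the decomposition contains a one-dimensional summand, or we are in the exceptional cases $n=4$ with dimension-type $2+2$, or $n=5$ with dimension-type $2+3$. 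In these exceptional cases we isolate a two-dimensional summand $s$, which is automatically essentially self-dual; after passing to a suitable soluble totally real extension to arrange the hypotheses of Proposition \ref{prop: 2d representations are odd} (together with Proposition \ref{prop: residually dihedral compatible system implies globally dihedral and thus odd} for the residually dihedral case), we may assume that $s$ is either dihedral or odd and essentially self-dual.

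Next, we show that the distinguished summand $s$ extends to a compatible system $\{s_\lambda\}$ of $\lambda$-adic representations of $G_F$. If $\dim s = 1$, then $s$ is a de Rham character of finite ramification, hence by class field theory the $\lambda_0$-adic realization of an algebraic Hecke character, and this gives the compatible system. If $\dim s = 2$ and $s$ is dihedral, $s \cong \Ind_K^F \theta$ for a quadratic CM extension $K/F$, and $\theta$ extends by class field theory; induction then furnishes $\{s_\lambda\}$. If $\dim s = 2$ and $s$ is non-dihedral (and hence odd essentially self-dual by the previous paragraph), the potential modularity machinery of \cite{blggt} --- applied as in the proof of Theorem \ref{thm: irreducibility for density one} --- shows that $s$ is potentially automorphic, and standard descent through the corresponding cyclic extension supplies the compatible system $\{s_\lambda\}$ over $F$.

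With $\{s_\lambda\}$ in hand, a pseudo-representation argument completes the proof. Because $s_{\lambda_0}$ is a direct summand of $r_{\lambda_0}(\pi)$, the characteristic polynomial of $s_{\lambda_0}(\Frob_v)$ divides that of $r_{\lambda_0}(\pi)(\Frob_v)$ for every unramified $v$. Since both characteristic polynomials lie in the number field $M$ and are independent of $\lambda$, the divisibility persists at every $\lambda$. By Chebotarev density and continuity, the quotient
\[
Q_\lambda(g)(T) \;:=\; \det(T - r_\lambda(\pi)(g))\big/\det(T - s_\lambda(g))
\]
is a continuous monic polynomial of degree $n - \dim s$ for every $g \in G_F$, defining a continuous determinant law of dimension $n - \dim s$ on $G_F$ valued in $\Qlbar$. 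By Chenevier's theory of determinant laws (equivalently Taylor's construction of pseudo-characters), this data is realized by a continuous semi-simple representation $r'_\lambda : G_F \to \GL_{n-\dim s}(\Qlbar)$, yielding a decomposition $r_\lambda(\pi) \cong s_\lambda \oplus r'_\lambda$ as semi-simple representations for \emph{every} $\lambda$. Taking $\lambda$ above a prime in the density-one set $\cL$ furnished by Theorem \ref{thm: irreducibility for density one} contradicts the irreducibility of $r_\lambda(\pi)$.

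The main obstacle is the second step for a two-dimensional non-dihedral summand: extending $s$ to a genuine compatible system relies on the potential modularity theorems of \cite{blggt} together with careful Galois descent from a solvable totally real extension (where the Fontaine-Laffaille type hypotheses of Proposition \ref{prop: 2d representations are odd} can be arranged) back to $F$. The self-duality reduction in the first step is similarly delicate, since the induced involution interacts non-trivially with the dihedral/non-dihedral classification of two-dimensional summands, and each exceptional partition type ($2+2$ and $2+3$) must be handled by a slightly different argument.
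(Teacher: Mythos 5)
Your final step contains exactly the fallacy the paper goes out of its way to warn against. You claim that once the characteristic polynomials of the compatible system $\{s_\lambda\}$ divide those of $\{r_\lambda(\pi)\}$ at all Frobenius elements, the quotient $Q_\lambda(g)(T)=\det(T-r_\lambda(\pi)(g))/\det(T-s_\lambda(g))$ defines a continuous determinant law of dimension $n-\dim s$, and hence that $r_\lambda(\pi)\cong s_\lambda\oplus r'_\lambda$ for \emph{every} $\lambda$. This is false: divisibility of characteristic polynomials (``weak divisibility'' in the paper's terminology) does \emph{not} imply a splitting of the representation, because the quotient, while a monic polynomial for each $g$, need not be multiplicative in $g$ --- the virtual representation $r_\lambda(\pi)-s_\lambda$ is not an actual representation, and neither Chenevier's determinant laws nor Taylor's pseudo-characters apply to it. The paper's own counterexample makes this concrete: for a non-CM Hilbert modular form $\pi$ with central character $\psi$, the system attached to $\Sym^{n-2}(\pi)\otimes\psi$ weakly divides the one attached to $\Sym^{n}(\pi)$, yet the latter is irreducible at every $\lambda$. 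In your setting nothing rules out that $r_\lambda(\pi)$ is irreducible for $\lambda\ne\lambda_0$ even though its Frobenius characteristic polynomials are divisible by those of $s_\lambda$; so no contradiction with Theorem \ref{thm: irreducibility for density one} is obtained, and the proof collapses at its last step. (There is also a secondary gap in your second step: potential automorphy of a non-dihedral two-dimensional summand gives automorphy over a totally real Galois extension $F'/F$ that is in general neither cyclic nor solvable, so ``standard descent'' does not directly furnish a compatible system over $F$; one must instead patch via Brauer's theorem as in \cite{blggt}.)

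The paper's actual argument is designed precisely to extract usable information from weak divisibility without ever asserting a splitting at other primes. The engine is the group-theoretic Theorem \ref{thm: punt}: if every element of the Zariski closure $H$ of the image of a strongly irreducible $r_\lambda(\pi)$ fixes a vector (which is what weak divisibility by the trivial character gives, via density of Frobenii), then $H=\PSL(2)$ acting by $\Sym^{n-1}$ with $n$ odd; Lemma \ref{lem: 2 divides 5} then shows $r_\lambda(\pi)$ is a twist of $\Sym^{n-1}$ of a two-dimensional system and is irreducible for all $\lambda$, which is the desired contradiction. That disposes of one-dimensional and dihedral summands (Corollary \ref{cor: divisibility by a single character is impossible}), and the remaining $2+2$ and $2+3$ partitions are then eliminated by self-duality considerations: for $n=5$ the pairing forces a $\GO_2$, hence dihedral, summand; for $n=4$ symplectic one passes to the five-dimensional system $\wedge^2\cR-\chi$, which acquires a character summand and is handled by the same $\Sym^4$ analysis; for $n=4$ orthogonal, purity of $r_\lambda(\pi)$ contradicts the regularity forced by the decomposition $t_\lambda\oplus t_\lambda\otimes\psi$. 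You would need to replace your pseudo-representation step with an argument of this kind.
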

By Theorem~\ref{thm: irreducibility for density one},
  $r_{\lambda}(\pi)$
is irreducible for a set of $\lambda$ of density one. By 
 Proposition~5.2.2 of~\cite{blggt}, this implies that 
 $\rbar_{\lambda}(\pi)$ is irreducible for a set of $\lambda$ of density one.

\medskip

\begin{defn}
  Let $F$ be a number field. We say that a representation
  $r:G_F\to\GL_n(\Qlbar)$ is \emph{strongly irreducible} if for all
  finite extensions $E/F$, $r|_{G_E}$ is irreducible.
\end{defn}

We would like to understand when
the Galois representations $r_{\lambda}(\pi)$ which are irreducible
can fail to be strongly irreducible.
We begin with an easy group theory lemma.

\begin{lemma} \label{lem:inductions}
Suppose that $G$ acts irreducibly on a finite dimensional vector space $V$ of dimension~$n$.
Let $G'$ be a normal finite index subgroup of $G$, and suppose that 
$V  |_{G'} \simeq \bigoplus W_k$ decomposes non-trivially as a $G'$ representation into $m$ distinct irreducible representations.
Then $m|n$ and there exists a  proper subgroup $H \supseteq G'$ of $G$ of index $m$ and an irreducible representation
$W$ of $H$ such that $V \simeq \Ind^{G}_{H} W$.
\end{lemma}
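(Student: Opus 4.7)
The plan is to apply Clifford's theorem essentially verbatim. Since $G'$ is normal in $G$, the group $G$ acts on the set of isomorphism classes of irreducible $G'$-representations by conjugation, and hence permutes the isotypic components of $V|_{G'}$. By hypothesis the $m$ irreducible summands $W_k$ are pairwise non-isomorphic, so each $W_k$ is itself an isotypic component. Thus $G$ permutes the $W_k$, and because $V$ is irreducible as a $G$-representation this permutation action must be transitive. In particular all the $W_k$ have the same dimension $d$, so $n = md$ and $m \mid n$.

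Next, define $H = \{g \in G : gW_1 = W_1\}$, the stabilizer of $W_1$ in $G$. Transitivity of the $G$-action on $\{W_1,\dots,W_m\}$ gives $[G:H] = m$, and the inclusion $G' \subseteq H$ is immediate since $G'$ acts on each $W_k$ as a subspace by definition. Note $H$ is a proper subgroup of $G$ because $m > 1$ (the decomposition is non-trivial).

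Now I claim that $W := W_1$ is irreducible as an $H$-representation. Let $U \subseteq W_1$ be a non-zero $H$-subrepresentation, and consider the $G$-subrepresentation $\sum_{g \in G} gU$ of $V$. Choosing coset representatives $g_1,\dots,g_m$ for $G/H$ with $g_iW_1 = W_i$, this sum equals $\sum_i g_iU$, which lands in $\bigoplus_i g_iW_1 = \bigoplus_i W_i = V$. Since the $W_i$ are linearly independent the sum is direct, and since $V$ is irreducible we must have $g_iU = W_i$ for each $i$, hence $U = W_1$.

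Finally, the natural $G$-equivariant map $\Ind_H^G W_1 \to V$ sending $g \otimes w \mapsto gw$ is surjective, because its image contains $W_1$ and is $G$-stable, hence contains every $W_k$. A dimension count gives $\dim \Ind_H^G W_1 = [G:H]\cdot \dim W_1 = md = n$, so the map is an isomorphism and $V \cong \Ind_H^G W$, as required. There is no real obstacle here; the lemma is a straightforward application of Clifford theory, and the only point that requires a little care is the irreducibility of $W_1$ as an $H$-module, which we handled above by exploiting that the isotypic components are already irreducible.
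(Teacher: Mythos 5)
Your proof is correct and follows essentially the same Clifford-theoretic route as the paper: transitivity of the $G$-action on the distinct summands, the stabilizer $H$ of index $m$ via orbit--stabilizer, and the identification $V \cong \Ind^{G}_{H} W$. The only cosmetic differences are that you construct the evaluation map $\Ind^{G}_{H} W \to V$ and conclude by surjectivity plus a dimension count, where the paper invokes Frobenius reciprocity to produce a nonzero map $V \to \Ind^{G}_{H} W$, and that you verify the irreducibility of $W$ as an $H$-representation directly, a point the paper leaves implicit (it follows from the final isomorphism, since inducing a reducible module yields a reducible one).
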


\begin{proof} Since the representations $W_k$ are distinct, and since $G'$ is normal, the group $G$
acts transitively on the set of representations $W_k$. In particular, all the $W_k$ have the same dimension.
 Let $W$ be one of these representations, and let
$H$ denote the stabilizer of $W$. By the orbit--stabilizer theorem, the index of $H$ in $G$ is
$m$. The representation $W$ extends to a representation of $H$. Since $\Hom_{H}(V,W)$ is non-trivial,
by Frobenius reciprocity $\Hom_{G}(V,\Ind^{G}_{H} W)$ is also non-trivial. Yet
$\Ind^{G}_{H}(W)$ has dimension $[G:H] \dim(W) = \dim(V)$ and $V$ is irreducible, and thus the homomorphism
$V  \rightarrow \Ind^{G}_{H}(W)$ must be both an injection and a surjection, and hence an isomorphism. \end{proof}

Using this lemma, we shall see that the density one set of irreducible Galois representations
$r_{\lambda}(\pi)$  remain irreducible upon restriction to any fixed finite extension,
\emph{except} in situations in which we can prove Theorem~\ref{thm: all irred char 0}
directly.

\begin{corollary} \label{cor:induct}Let $F$ be a totally real field. Let $(\pi,\chi)$ be a RAESDC
  automorphic representation of $\GL_n(\A_F)$ with $n \le 5$.  Let
  $\lambda$ be a prime such that $r_{\lambda}(\pi)$ is irreducible.
Then either:
\begin{enumerate} 
\item $r_\lambda(\pi)$ is strongly irreducible, or
\item $(\pi,\chi)$ is an automorphic induction,  $r_{\lambda}(\pi)$ is irreducible for all $\lambda$, and
$\rbar_{\lambda}(\pi)$ is irreducible for all but finitely many $\lambda$.
\end{enumerate}
\end{corollary}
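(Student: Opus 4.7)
The plan is to assume $r_\lambda(\pi)$ is irreducible but fails to be strongly irreducible, and to deduce alternative~(2). Take a finite Galois extension $E/F$ minimal with the property that $r_\lambda(\pi)|_{G_E}$ is reducible. A standard Clifford-theoretic argument shows that, by this minimality, the $G_E$-irreducible constituents of $r_\lambda(\pi)|_{G_E}$ are pairwise non-isomorphic: otherwise passing to the stabiliser of a constituent would contradict the minimality of $E$. Applying Lemma~\ref{lem:inductions} with $G=G_F$ and $G'=G_E$ then produces an intermediate field $L=\bar F^H$ with $m:=[L:F]\in\{2,3,4,5\}$ and an irreducible continuous representation $s_\lambda\colon G_L\to\GL_{n/m}(\Qlbar)$ such that
\[
  r_\lambda(\pi)\cong\Ind_{G_L}^{G_F}s_\lambda.
\]
Since $n\le 5$, necessarily $n/m\in\{1,2\}$.

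When $n/m=1$, class field theory identifies $s_\lambda$ with an algebraic Hecke character $\psi$ of $L$. Iterating Arthur--Clozel cyclic automorphic induction along a solvable filtration of the Galois closure of $L/F$ produces an isobaric automorphic representation $\sigma$ of $\GL_n(\A_F)$ whose $\lambda$-adic Galois representation agrees with $\Ind_{G_L}^{G_F}\psi=r_\lambda(\pi)$ at almost all places, and strong multiplicity one identifies $\sigma$ with $\pi$, so $\pi$ is an automorphic induction. When $n/m=2$ (forcing $n=4$ and $m=2$), the representation $s_\lambda$ is $2$-dimensional, regular, crystalline, and odd essentially self-dual by an argument paralleling Proposition~\ref{prop: 2d representations are odd}; a $2$-dimensional potential automorphy theorem combined with cyclic descent yields an automorphic representation $\sigma$ of $\GL_2(\A_L)$ with $r_\lambda(\sigma)=s_\lambda$, and quadratic automorphic induction delivers $\pi=\mathrm{AI}(\sigma)$.

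Once $\pi=\mathrm{AI}(\sigma)$ is known, comparing traces of Frobenius at unramified places and using semisimplicity of $r_\mu(\pi)$ gives, for every finite place $\mu$ of the coefficient field,
\[
  r_\mu(\pi)\cong\Ind_{G_L}^{G_F}r_\mu(\sigma).
\]
Cuspidality of $\pi$ imposes the Arthur--Clozel cuspidality criterion on $\sigma$, which at the Galois level means that $r_\mu(\sigma)$ is not isomorphic to any non-trivial $G_F/G_L$-conjugate of itself. This is precisely the condition that forces $\Ind_{G_L}^{G_F}r_\mu(\sigma)$ to be irreducible, so $r_\mu(\pi)$ is irreducible for every $\mu$. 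The parallel argument for residual representations, combined with Proposition~5.2.2 of~\cite{blggt} applied to the compatible system $\{r_\mu(\sigma)\}$ to ensure that $\bar r_\mu(\sigma)$ and its $G_F/G_L$-conjugates remain irreducible and pairwise non-isomorphic, yields residual irreducibility of $\bar r_\mu(\pi)$ for all but finitely many $\mu$.

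The main obstacle is the $2$-dimensional case $n=4$, $m=2$: obtaining full (rather than merely potential) automorphy of $s_\lambda$ in a form compatible with the $\Gal(L/F)$-action required for induction demands a delicate combination of $2$-dimensional potential modularity with solvable base change descent. A secondary worry is the case $m=5$ with $L/F$ having non-solvable Galois closure, where the Arthur--Clozel machinery does not apply directly; here one must either rule out such a configuration using the essential self-duality and Hodge--Tate constraints on $r_\lambda(\pi)$, or invoke a more powerful automorphy result. The character cases over solvable Galois closures, by contrast, are essentially formal consequences of class field theory and iterated cyclic automorphic induction.
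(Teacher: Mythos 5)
Your overall skeleton (a minimal Galois $E/F$ with reducible restriction, Lemma~\ref{lem:inductions}, hence $r_\lambda(\pi)\cong\Ind_{G_L}^{G_F}s_\lambda$ with $n/m\in\{1,2\}$) matches the paper, but the way you pass from this Galois-theoretic statement to ``$(\pi,\chi)$ is an automorphic induction'' is where the argument breaks down. You propose to prove automorphy of the inducing datum $s_\lambda$ itself --- by iterated cyclic automorphic induction in the character case, and by potential automorphy plus solvable descent in the two-dimensional case --- and you correctly flag that both steps are problematic (the $n=4$, $m=2$ descent, and the $m=5$ non-solvable configuration). The paper never proves automorphy of $s_\lambda$. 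It observes instead that once $r_\lambda(\pi)$ is induced from a quadratic $K/F$ one has $r_\lambda(\pi)\cong r_\lambda(\pi)\otimes\eta$ for $\eta$ the quadratic character of $K/F$; by Chebotarev and multiplicity one for $\GL_4(\A_F)$ this gives $\pi\cong\pi\otimes\eta$, and Theorem~4.2 of Arthur--Clozel then says directly that $\pi$ is automorphically induced from $K$. This sidesteps every automorphy issue you raise.

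Second, you miss the eliminations that make the case analysis close up. For $n=3$ or $5$ the only possible induction is of a character from a degree-$n$ extension $H/F$; since $[H:F]$ is odd, $H$ contains no CM subfield, so the de Rham character is a finite-order character times a power of the cyclotomic character and the induced representation cannot be regular. These cases --- including your worrisome non-solvable $m=5$ configuration --- therefore do not occur at all. Likewise, for $n=4$ and $m=4$, regularity forces the degree-$4$ field $L$ to contain a CM field, hence an index-two subfield, reducing to quadratic induction; and the quadratic field $K$ must be totally real (a CM $K$ is incompatible with the regularity of $\pi$, by consideration of infinitesimal characters), which is exactly what makes the inducing representation $\varpi$ a Hilbert modular form whose $\lambda$-adic and residual irreducibility are already known. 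A smaller point: the pairwise distinctness of the constituents of $r_\lambda(\pi)|_{G_E}$ does not follow from minimality of $E$ as you claim (an isotypic restriction is perfectly consistent with minimality, and the stabiliser argument gives nothing in that case); in the paper it follows from regularity, i.e.\ from the distinctness of the Hodge--Tate weights of $r_\lambda(\pi)|_{G_E}$.
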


\begin{proof}
  We claim that for any finite extension $E/F$, either
  $r_{\lambda}(\pi) |_{G_{E}}$ is irreducible or it decomposes into a
  sum of \emph{distinct} irreducible representations. This follows
  immediately from the fact that $r_{\lambda}(\pi)|_{G_{E}}$ has
  distinct Hodge--Tate weights at any prime $w|l$.  (Note that
  $r_{\lambda}(\pi) |_{G_{E}}$ is necessarily semisimple; if $V$
  denotes any irreducible subrepresentation, then the various
  conjugates of $V$ are stable under the conjugates of $G_E$, and we
  see that $r_{\lambda}(\pi) |_{G_{E}}$ becomes completely
  decomposable under restriction to a finite index subgroup, so must
  already have been semisimple.) Suppose that $r_\lambda(\pi)|_{G_E}$
  is reducible for some finite extension $E/F$. Replacing $E$ by its
  normal closure over $F$, we may assume that the extension $E/F$ is
  Galois, and hence by Lemma~\ref{lem:inductions}, we see that
  $r_{\lambda}(\pi)$ is the induction of an irreducible representation
  from some finite extension of degree dividing $n$. If $n =2$, $3$ or
  $5$, the only possibility is that $r_{\lambda}(\pi)$ is the
  induction of a character from some degree $n$ extension $H$ of $F$.
  This character is de Rham, and is thus the Galois representation
  associated to an algebraic Hecke character. If $n=3$ or $5$, then
  $[H:F]$ is odd, and thus $H$ does not contain a CM field. It follows
  that the corresponding Galois representation is a finite order
  character times some power of the cyclotomic character. This
  contradicts the regularity of $r_{\lambda}(\pi)$. If $n=2$, then $H$
  must be a CM field, and $r_\lambda(\pi)$ is the induction of an
  algebraic Hecke character. The claims regarding the irreducibility
  of $\rbar_{\lambda}(\pi)$ are elementary to verify in this case.
  
  If $n = 4$, then either
  $r_{\lambda}(\pi)$ is the induction of a character from some degree
  $4$ extension $L/F$, or the induction of a two dimensional
  representation of some quadratic extension $K/F$.  In the first
  case, since $r_{\lambda}(\pi)$ is regular, $L$ contains a CM field.
  It follows that $L$ must contain a subfield $K$ of index two, and
  thus in both cases $r_{\lambda}(\pi)$ is the induction of some two
  dimensional representation of some quadratic extension $K/F$.  It
  follows that there is an isomorphism $r_{\lambda}(\pi) \simeq
  r_{\lambda}(\pi) \otimes \eta$, where $\eta$ is the quadratic
  character of $K/F$. By multiplicity one for $\GL_4(\A_{F})$
  (\cite{MR623137}) and by Theorem~4.2 (p.202) of~\cite{MR1007299}, we
  deduce that $(\pi,\chi)$ is an automorphic induction from some
  quadratic field $K/F$.
 
  It suffices to prove that when $n = 4$ and $(\pi,\chi)$ is an
  induction of some cuspidal automorphic representation $\varpi$ of
  $\GL_2(\A_K)$ from some quadratic field $K/F$, then
  $r_{\lambda}(\pi)$ is irreducible for \emph{all} $\lambda$, and
  $\rbar_{\lambda}(\pi)$ is irreducible for all but finitely many
  $\lambda$.  If $K/F$ is totally real, then $\varpi$ corresponds to a
  Hilbert modular form with corresponding Galois representations
  $s_{\lambda}(\varpi)$, and there are isomorphisms $r_{\lambda}(\pi)
  \simeq \Ind^{G_F}_{G_K} s_{\lambda}(\varpi)$.  The representation
  $s_{\lambda}(\varpi)$ is irreducible for all $\lambda$, and
  $\sbar_{\lambda}(\varpi)$ is irreducible for all but finitely many
  $\lambda$ (\cite[Proposition 0.1]{MR2172950}).  If $r_{\lambda}(\pi)$ is reducible, then
  $s_{\lambda}(\varpi) \simeq s^c_{\lambda}(\varpi)$ where $c$ is the
  non-trivial element of $\Gal(K/F)$. Similarly, if
  $\rbar_{\lambda}(\pi)|_{G_F}$ is reducible for infinitely many
  primes $\lambda$, then $\sbar_{\lambda}(\varpi) \simeq
  \sbar^c_{\lambda}(\varpi)$ for infinitely many $\lambda$. In either
  case, by multiplicity one, we deduce that $\varpi \simeq \varpi^c$,
  and by Theorem~4.2 (p.202) of~\cite{MR1007299}, we deduce that
  $\varpi$ itself arises from base change from $\GL_2(\A_F)$.  If this
  is so, however, then $\pi$ is not cuspidal, contrary to
  assumption. Suppose instead that $K/F$ is not totally real. By (3.6.1)
  of~\cite{MR892187}), the infinitesimal character of $\varpi$ at any
  pair of complex conjugate infinite places must be equal,
  contradicting the regularity of $\pi$. \end{proof}

In the sequel, it will be useful to collate some information
about irreducible representations of  semi-simple  Lie algebras of  small dimension.

\begin{prop} \label{prop: list of algebras}
Let $k$ be an algebraically closed field of characteristic zero.
Let $G$ be the $k$-points of a reductive algebraic group acting faithfully
and irreducibly on a vector space over $k$ of dimension $n$. 
Let $G^0$ denote the connected component of $G$, 
let $\gog$ be the Lie algebra of $G^0$, and write $\gog = \gz \oplus
\gh$, with $\gz$ is abelian and $\gh$ semisimple.
Suppose that $G^0$ is not abelian. Then, for $n \le 6$, $\gh$ is one
of the following algebras, where the columns of the table  correspond to whether $G$
preserves a generalized orthogonal pairing, a generalized
symplectic pairing, or does \emph{not} preserve any such pairing respectively.
\emph{
\begin{center}
\begin{tabular}{|c|c|c|c|}
\hline
  & $\GO$ & $\GSp$ & $\GL$ \\
\hline
 $2$ & *  & $\slf_2$  & *  \\
$3$ & $\slf_2$ & * & $\slf_3$ \\
$4$ & $\sof_4 = \slf_2 \times \slf_2$ & $\slf_2$, $\slf_2 \times \slf_2$,
$\spf_4$ & $\slf_4$ \\
$5$ & $\slf_2$, $\sof_5 = \spf_4$ & * & $\slf_5$ \\
$6$ & $\sof_6$ & $\slf_2$, $\slf_2 \times \slf_2$, $\spf_6$ & $\slf_2 \times
\slf_3$, $\slf_2 \times \slf_2 \times \slf_2$, $\slf_3 \times \slf_3$, $\slf_4$, $\slf_6$ \\
\hline
\end{tabular}
\end{center}
}
\end{prop}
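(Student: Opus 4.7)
The plan is to prove this by direct enumeration, using the classification of complex semisimple Lie algebras together with Clifford theory to reduce to a finite list of cases. First I would apply Clifford theory: since $G^0$ is normal of finite index in $G$ and $V$ is irreducible for $G$, the restriction $V|_{G^0}$ decomposes as a sum $W_1 \oplus \cdots \oplus W_m$ of $G/G^0$--conjugate irreducibles of a common dimension $n/m$, each of the form $\chi_i \otimes U_i$ where $\chi_i$ is a character of $\gz$ and $U_i$ is an irreducible $\gh$--representation. In particular, the proposition reduces to classifying, for each divisor $d$ of $n$ with $n \le 6$, the semisimple Lie algebras $\gh$ that admit either (a) a faithful irreducible representation of dimension $n$ (when $m=1$), or (b) a collection of $m > 1$ irreducible representations of dimension $n/m$ whose direct sum is faithful and on which a group of outer permutations can act to produce an irreducible $G$--representation.

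Next I would enumerate the possibilities. For each simple Lie algebra ($\slf_r$, $\spf_{2r}$, $\sof_r$, and the exceptional algebras) I would read off from the highest-weight theory which irreducible representations have dimension at most $6$; the low-rank exceptional isomorphisms ($\sof_3 = \slf_2$, $\sof_4 = \slf_2 \times \slf_2$, $\sof_5 = \spf_4$, $\sof_6 = \slf_4$) take care of the coincidences in the table. No exceptional Lie algebra can occur, since $G_2$ has no nontrivial representation of dimension less than $7$. For reductive but non-simple $\gh$, the Künneth-type rule $\dim(U_1 \boxtimes U_2) = \dim U_1 \cdot \dim U_2$ and the requirement that each factor act nontrivially (by faithfulness of the $G^0$--action) cuts the list down quickly; for example in dimension $6$ the only non-simple options in Case (a) are $\slf_2 \times \slf_3$ (standard tensor standard), $\slf_2 \times \slf_2 \times \slf_2$ (three-fold tensor of standards), and $\slf_3 \times \slf_3$ (in Case (b), with $G/G^0$ swapping factors and each summand $3$--dimensional).

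For Case (b), the constraint that the $W_i$ be $G/G^0$--conjugate forces either $\gh$ to be a product of isomorphic simple factors that $G/G^0$ permutes (e.g.\ $\slf_2^{\times 2}$ with summands the two standard representations for $n=4$, or $\slf_2^{\times 3}$ and $\slf_3^{\times 2}$ for $n=6$), or the $U_i$ to be distinct representations of a single $\gh$ swapped by an outer automorphism (e.g.\ the standard and dual of $\slf_n$). Running through these in each dimension $n \le 6$ reproduces exactly the algebras listed in the table.

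Finally, for each entry one must determine whether $V$ carries a $G$--invariant orthogonal or symplectic form. If $V|_{\gh}$ is irreducible, this is read off from the self-duality type of the corresponding highest-weight representation (via the action of $-\mathrm{Id}$ in the Weyl group, or equivalently from the Frobenius--Schur indicator): representations of $\slf_2$ of odd dimension are orthogonal and those of even dimension symplectic, the $6$--dimensional $\wedge^2$ of $\slf_4$ is orthogonal, the standard of $\spf_{2n}$ is symplectic, and the standards of $\slf_n$ for $n \ge 3$ are not self-dual. In Case (b), where $V|_{G^0} = W \oplus W^*$ (or a longer orbit), the invariant pairing comes from the canonical evaluation pairing on $W \oplus W^*$, which is symplectic or orthogonal depending on how the swap in $G/G^0$ is normalised; both types occur, accounting for the appearance of e.g.\ $\slf_2 \times \slf_2$ in both the $\GO$ and $\GSp$ columns for $n=4$. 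The main (though purely mechanical) obstacle is this last bookkeeping step: making sure, for each reducible-on-$G^0$ case, that one has correctly matched the two possible extensions of the pairing to the disconnected group $G$ against the actual possibilities permitted by the classification.
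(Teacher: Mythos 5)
Your strategy is the same as the paper's: the printed proof is literally ``check by hand'', with remarks singling out exactly the configurations you isolate as Case (b) (the index-two and index-six extensions acting irreducibly while $G^0$ does not), so the Clifford-theoretic organization plus the list of low-dimensional irreducibles of semisimple Lie algebras is the right skeleton. But since the entire content of the proposition is the correctness of the table, the case-checking \emph{is} the proof, and as written yours contains concrete errors. First, your dimension-$6$ Case (a) list is wrong: the three-fold tensor product of standards of $\slf_2\times\slf_2\times\slf_2$ has dimension $8$, not $6$ --- that algebra occurs only via Case (b), as $\mathrm{std}\oplus\mathrm{std}\oplus\mathrm{std}$ with $G/G^0$ permuting the factors (you do list it correctly there, so this is an internal contradiction); and the list omits $\slf_2\times\slf_2$ acting by $\mathrm{std}\boxtimes\Sym^2(\mathrm{std})$, which is an entry of the table in the $\GSp$ column. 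Second, your explanation of why $\slf_2\times\slf_2$ appears in both the $\GO$ and $\GSp$ columns at $n=4$ is not right: the orthogonal occurrence is the \emph{irreducible} tensor product $\mathrm{std}\boxtimes\mathrm{std}$, i.e.\ $\sof_4$ in its standard representation (Case (a)), while the symplectic occurrence is the Case (b) representation $\mathrm{std}\oplus\mathrm{std}$ with the summands swapped; for the latter the $G^0$-invariant bilinear forms are spanned by the symplectic forms on the two summands (the summands are self-dual and mutually non-isomorphic), so only the symplectic type arises --- it is not a choice of symmetrization of an evaluation pairing on $W\oplus W^\vee$.

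A further caution: ``running through these reproduces exactly the table'' is precisely what must be verified, and your own Case (b) template ``standard and dual of $\slf_n$ swapped by an outer automorphism'' applies at $n=6$ to $\slf_3$, giving the self-dual representation $\mathrm{std}\oplus\mathrm{std}^\vee$ of $\GL_3\rtimes\Z/2\Z$; the algebra $\slf_3$ does not appear by itself in the $n=6$ row. So a faithful execution of your plan does not land exactly on the printed table, and you would need either to explain why such cases are excluded (they are not, by the example just given) or to record them. None of this affects how the proposition is used later in the paper (only the $n\le 5$ rows and the irreducibility of $\wedge^2$ of the standard of $\slf_4$ are invoked), but it shows that the ``purely mechanical bookkeeping'' you defer is exactly where the argument can go wrong and must be done in full.
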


\begin{proof} This can be checked directly by hand.
In dimension $n$, the representation of $\slf_2$ is the $(n-1)$st symmetric
power of the tautological representation, which is orthogonal
if $n$ is odd and symplectic if $n$ is even. 
The four dimensional symplectic representation of $\slf_2 \times \slf_2$ is reducible,
but the image  of $\GL(2) \times \GL(2)$ in $\GL(4)$ is normalized
by a group $G$ which contains it with index two and does act irreducibly.
The same is true of $\slf_3 \times \slf_3$ (index two) and
$\slf_2 \times \slf_2 \times \slf_2$ (index six) in dimension six.
The algebra $\slf_2 \times \slf_2$ has a six dimensional
representation which is the tensor product of the standard
representation of the first factor and the symmetric square of the second.
Finally, $\slf_4$ has a six dimensional representation which is
$\wedge^2$ of the tautological representation.
\end{proof}

The key idea of our argument is that,
 with certain caveats, we can detect reducibility
on the level of compatible systems. Suppose that $\cR$ and $\cS$ are two weakly compatible systems of 
Galois representations.
\begin{df}  Say that $\cS$ \emph{weakly divides} $\cR$ if
the characteristic polynomials of $\cS$ 
divide the characteristic polynomials of $\cR$.
\end{df}

It is not true that if $\cS$ weakly divides $\cR$ then there is a
corresponding splitting of Galois representations. A good example to
keep in mind is as follows. Suppose that $\pi$ is a RAESDC automorphic
representation for $\GL_2(\A_F)$ with central character $\psi$ which
does not have CM.  Then if $\cR$ and $\cS$ are the compatible systems
associated to $\Sym^{n-2}(\pi) \otimes \psi$ and $\Sym^{n}(\pi)$
respectively then $\cS$ weakly divides $\cR$, but both compatible
systems are irreducible.  Nevertheless, we will be able to detect non-trivial
information from weak divisibility. The key result is the following.

\begin{thm} \label{thm: punt}
Let $V$ be a vector space of dimension $n \le 5$ over an
algebraically closed field of characteristic zero.
Let $H \subseteq \GL(V)$ be a  Zariski closed subgroup, and assume that the
connected component of the identity $H^0$ acts irreducibly on $V$.  Suppose that every
$h \in H$ has a fixed vector in $V$. Then  
$H = \PSL(2)$, $n$ is odd, and $V \simeq \Sym^{n-1}(W)$ where
$W$ is the standard representation of $\SL(2)$.
For a generic semi-simple element $h \in H$, one has $\dim(V|h = 1) = 1$.
\end{thm}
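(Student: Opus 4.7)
The plan is to deduce the theorem from Proposition~\ref{prop: list of algebras} by a weight-theoretic reduction, together with an analysis of the component group $H/H^0$.

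First I would reformulate the hypothesis: $h \in H$ has a fixed vector on $V$ if and only if $\det(I - h) = 0$. Since $\det(I - h)$ is a regular function on $H$ that vanishes by assumption, it vanishes on the connected component $H^0$ (which is reductive, as the unipotent radical would act with nontrivial invariants on $V$ contradicting irreducibility), and hence on any maximal torus $T \subset H^0$. On $T$ one has
\[
\det(I - t) \;=\; \prod_{\chi} (1 - \chi(t))^{m_\chi},
\]
where $\chi$ runs over the weights of $V|_T$ with multiplicities $m_\chi$. The irreducibility of the variety $T$ forces some factor to be identically zero, i.e., the trivial character must appear as a weight of $V|_T$.

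Next I would run through Proposition~\ref{prop: list of algebras} case by case. For each candidate $(\gh, V)$ with $n \le 5$, one checks from the standard weight description whether the zero weight occurs. A direct enumeration leaves only $\gh = \slf_2$ acting on $\Sym^{n-1}(W)$ with $n$ odd (that is, $n = 3$ or $n = 5$), where the zero weight appears with multiplicity exactly one. The main obstacle I anticipate is excluding $\gh = \sof_5 \cong \spf_4$ acting on its standard $5$-dimensional representation, whose weights $\{\pm L_1, \pm L_2, 0\}$ also include the trivial character; ruling this case out requires an argument beyond the pure weight calculation, presumably a closer look at the component group $H/H^0$ or a feature of the embedding $H^0 \hookrightarrow \GL(V)$ that is not visible at the level of the Lie algebra.

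Finally I would pass from $\gh = \slf_2$ with $V = \Sym^{n-1}(W)$ and $n$ odd to the conclusion $H = \PSL(2)$. The center $\{\pm I\} \subset \SL_2$ acts trivially on $\Sym^{n-1}(W)$ when $n$ is odd, so $H^0$ is the image of $\PSL(2)$ in $\GL(V)$. By Schur's lemma the centralizer of $H^0$ is $\Gm$, and since $\PSL(2)$ has no outer automorphisms as an algebraic group over an algebraically closed field of characteristic zero, the normalizer of $H^0$ in $\GL(V)$ is $\Gm \cdot \PSL(2)$. If some coset of $H^0$ in $H$ is represented by $c \cdot g_0$ with $c \ne 1$, then $H$ contains $c \cdot g$ for every $g \in \PSL(2)$; taking $g$ generic semisimple with eigenvalues $t^{n-1}, t^{n-3}, \ldots, t^{-(n-1)}$ on $V$, the eigenvalues of $c \cdot g$ are $c t^k$ for $k \in \{-(n-1), \ldots, n-1\}$, none of which equals $1$ for generic $t$, contradicting the fixed-vector hypothesis. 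Hence $H = H^0 = \PSL(2)$, and the multiplicity-one statement $\dim(V|h = 1) = 1$ for generic semisimple $h$ is immediate from the zero weight having multiplicity one in $\Sym^{n-1}(W)$.
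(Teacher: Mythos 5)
Your proposal follows the same route as the paper's proof: show that $H^0$ is reductive, reduce the fixed-vector hypothesis to a maximal torus of $H^0$ (equivalently, to the occurrence of the zero weight in $V$), compare against the list of faithful irreducible representations of semisimple Lie algebras in dimension at most $5$, and then kill the component group. Your normalizer/eigenvalue argument for $H = H^0$ is a harmless variant of the paper's argument via the triviality of $\Out(\PSL(2))$ together with Schur's lemma; both are correct.

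The case you flag as unresolved is, however, not a gap you could have closed: it is a counterexample to the statement as written. Take $H = \SO(5)$ in its standard five-dimensional representation. It is connected and acts irreducibly, and \emph{every} $h \in \SO(5)$ has $1$ as an eigenvalue: since $h$ preserves a nondegenerate symmetric form, $h$ is conjugate to $h^{-1}$, so its characteristic polynomial satisfies $p(t) = -t^{5}p(1/t)$ and hence $p(1) = -p(1) = 0$. Thus no refinement of the component-group analysis or of the embedding $H^0 \hookrightarrow \GL(V)$ can rule out $\gh = \sof_5$; the conclusion of the theorem has to be weakened to allow $H \simeq \SO(5)$ when $n = 5$ (note that the final assertion, that a generic semisimple $h$ has one-dimensional fixed space, does survive in that case, since the zero weight of the standard representation of $\sof_5$ has multiplicity one). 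The paper's own proof hides exactly this oversight in the sentence ``it is easy to check that the only possibility is that $\gh = \slf_2$'': the check fails for $\spf_4 = \sof_5$ acting in dimension $5$. So your weight computation is correct, your caution about the $\sof_5$ case is warranted, and the defect lies in the theorem rather than in your argument; any use of the theorem (such as in Lemma \ref{lem: 2 divides 5}) must separately dispose of the $\SO(5)$ possibility.
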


\begin{proof} Tautologically, $H$ admits a faithful irreducible representation
into $\GL(V)$, and thus $H$ is reductive. Let $Z$ be the center of $H$.
By Schur's lemma, $Z$ acts on $V$ via scalars. Yet (by assumption) any $z \in Z \subset H$
has a fixed vector, and thus $Z$ is trivial. In particular, $H$ is semi-simple.
It suffices to assume that 
every $t \in T \subset H$ has a fixed vector for every torus $T$ on $H$.
Since $H^0$ is connected, 
we may check this condition for $H^0$
on the level of Lie algebras. The only
(semi-simple) Lie algebras $\gh$ which admit faithful irreducible representations of
dimension at most five are $\slf_2$,  $\slf_3$, $\sof_4 = \slf_2 \times \slf_2$,  $\spf_4 = \sof_5$, 
$\slf_4$, and $\slf_5$. It is easy to check that the only possibility
 is that $\gh = \slf_2$. Since $H^0$ is connected with trivial center it must be the adjoint form of $\SL(2)$,
 which is $\PSL(2)$.  The irreducible representations of $\PSL(2)$ are given by the even symmetric
 powers of the standard representation of $\SL(2)$. 
 The group $H^0 = \PSL(2)$ has a trivial outer automorphism group. Hence the action of conjugation by
  $h \in H$ on $H^0$ is given by conjugation by a element $\gamma$ of $\PSL(2)$.  It follows that $\gamma^{-1} h$
  acts trivially on $\PSL(2)$, and thus, by Schur's lemma, $\gamma^{-1} h$ is a scalar, and so lies in the center $Z$ of $H$.
  Yet we have seen that $Z$ is trivial, and so it follows
  that $H = H^0$. A generic semi-simple element in $\PSL(2)$ is conjugate to an element of the form
 $h=\displaystyle{\left(\begin{matrix} z &  \\ & z^{-1} \end{matrix} \right)}$, for which $\dim(V|h=  1) = 1$
 unless $z$ is a root of unity of sufficiently small order.
 \end{proof}
 
 Using this, we prove the following.

\begin{lemma}\label{lem: 2 divides 5}Let $F$ be a totally real field. Let $(\pi,\chi)$ be a RAESDC automorphic
representation of $\GL_n(\A_F)$ with $n \le 5$, and let
$\cR$ be the corresponding weakly compatible system.
Suppose that either:
\begin{enumerate}
\item $\cR$ is weakly divisible by a compatible system  of algebraic Hecke characters.
\item For some finite extension $E/F$, $\cR|_{G_E}$ is weakly divisibly by a direct sum of compatible
systems of two algebraic Hecke characters over $E$.
\end{enumerate}
Then either $(\pi,\chi)$ is an automorphic induction, or $n$ is odd,
and there exists a finite Galois extension $F'/F$ and a compatible
system of two dimensional irreducible Galois representations $\cS$ of
$G_{F'}$ such
that $\cR|_{G_{F'}}$ is 
a twist of $\Sym^{n-1}(\cS)$. In either case, $r_{\lambda}(\pi)$ is irreducible
for all $\lambda$, and $\rbar_{\lambda}(\pi)$ is irreducible for all but finitely many $\lambda$.

\end{lemma}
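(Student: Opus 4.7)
The plan has two strands, depending on whether $(\pi,\chi)$ is an automorphic induction. If it is, Corollary~\ref{cor:induct} immediately supplies irreducibility of $r_\lambda(\pi)$ for all $\lambda$ and of $\bar r_\lambda(\pi)$ for all but finitely many $\lambda$. The substantive work is in the other case, so assume henceforth that $(\pi,\chi)$ is \emph{not} an automorphic induction. Then Theorem~\ref{thm: irreducibility for density one} combined with Corollary~\ref{cor:induct} shows that $r_\lambda(\pi)$ is strongly irreducible on a density-one set of primes, and I would fix such a $\lambda$.

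Next I would apply Theorem~\ref{thm: punt} to extract the $\PSL(2)$ structure. In case~(1), let $\cP=(\psi_\lambda)$ be the weakly dividing compatible system of Hecke characters; after twisting $r_\lambda(\pi)$ by $\psi_\lambda^{-1}$ the value $1$ becomes a Frobenius eigenvalue at every unramified place, so by Chebotarev every element of the image fixes a vector. Strong irreducibility forces the connected component $H^0$ of the Zariski closure to act irreducibly, so Theorem~\ref{thm: punt} yields $H=\PSL(2)$, $n$ odd, and $V\cong \Sym^{n-1}(W)$ with $W$ the standard $\SL(2)$-representation. Case~(2) is handled in exactly the same way after first restricting to $G_E$ (which preserves strong irreducibility) and twisting by $\psi_{1,\lambda}^{-1}$.

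I would then build the compatible system $\cS$. Pass to a finite Galois extension $F'/F$ (containing $E$ in case~(2)) over which the twisted $r_\lambda(\pi)|_{G_{F'}}$ has image inside $H^0\cong\PSL(2)(\Qlbar)$. The resulting projective representation of $G_{F'}$ with Zariski dense image in $\PGL_2$ lifts to $s_\lambda\colon G_{F'}\to\GL_2(\Qlbar)$ by a standard Tate-style lifting argument for number field Galois groups, and then $r_\lambda(\pi)|_{G_{F'}}\cong \Sym^{n-1}(s_\lambda)\otimes\eta_\lambda$ for a suitable character $\eta_\lambda$. For each other prime $\lambda'$, the characteristic polynomial of $s_{\lambda'}(\Frob_w)$ at unramified places $w$ is pinned down (up to finite choice) by the characteristic polynomial of $r_{\lambda'}(\pi)|_{G_{F'}}(\Frob_w)$ together with a determinant character; making these choices consistently across $\lambda'$ produces the compatible system $\cS$, and hence the structural conclusion that $\cR|_{G_{F'}}$ is a twist of $\Sym^{n-1}(\cS)$.

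For the final irreducibility claims, the $2$-dimensional compatible system $\cS$ inherits regularity and the requisite local properties from $\cR$, so Ribet-style arguments (in the spirit of Proposition~\ref{prop: residually dihedral compatible system implies globally dihedral and thus odd} and the machinery from \cite{blggt}) show that $s_{\lambda'}$ is irreducible for every $\lambda'$ and $\bar s_{\lambda'}$ is irreducible outside a finite set. Since $n\le 5$, $\Sym^{n-1}$ of an irreducible two-dimensional representation is irreducible over $\Qlbar$, and also residually once $l>n-1$, so $r_\lambda(\pi)|_{G_{F'}}$ is irreducible for all $\lambda$, whence so is $r_\lambda(\pi)$, and similarly residually for almost all $\lambda$. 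The main obstacle I anticipate is the clean construction of $\cS$: lifting the projective representation is standard, but tracking the compatibility of the twist characters $\eta_{\lambda'}$ and the determinants to assemble a bona fide compatible system, and then proving irreducibility of $\cS$ at every prime without assuming its automorphy a priori, requires careful bookkeeping.
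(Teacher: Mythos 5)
There is a genuine gap, and it sits exactly where you flag it as ``careful bookkeeping'': the construction of the compatible system $\cS$ and the irreducibility claims for it. Knowing the characteristic polynomials that $s_{\lambda'}(\Frob_w)$ \emph{would} have does not produce a Galois representation $s_{\lambda'}$: to lift $r_{\lambda'}(\pi)|_{G_{F'}}$ through $\Sym^{n-1}$ at another prime $\lambda'$ you would first need to know that $r_{\lambda'}(\pi)$ is irreducible with Zariski closure the image of $\PSL_2$ under $\Sym^{n-1}$ --- which is essentially the conclusion you are trying to prove. Likewise, ``Ribet-style arguments'' for the irreducibility of $s_{\lambda'}$ at every prime are not available for an abstract two-dimensional compatible system; Ribet's argument uses automorphy. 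The paper's proof supplies the missing idea: it proves that $s_\lambda$ (the lift at the one chosen prime) is \emph{potentially automorphic}. Concretely, it uses the plethysm $(\Sym^{n-1}W)^{\otimes 2}\otimes\det(W)^{-(n-1)}=\bigoplus_i \Sym^{2i}W\otimes\det(W)^{-i}$ to realize $\ad^0(s_\lambda)$ as a constituent of $r_\lambda(\pi)^{\otimes 2}$, hence crystalline, regular and (by oddness of $r_\lambda(\pi)$) odd, with $\ad^0(\sbar_\lambda)|_{G_{F(\zeta_l)}}$ irreducible since $\rbar_\lambda(\pi)$ is irreducible; then Corollary 4.5.2 and Lemma 1.4.3(2) of \cite{blggt} make $\ad^0(s_\lambda)|_{G_{F'}}$ automorphic over a totally real Galois $F'/F$, and Ramakrishnan's characterization of symmetric-square lifts descends this to show $s_\lambda|_{G_{F'}}\cong r_\lambda(\pi')\otimes\psi$ for a Hilbert modular form $\pi'$. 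The compatible system $\cS$ is then $\{r_{\lambda'}(\pi')\}$, and since $\pi'$ is non-CM (because $r_\lambda(\pi)$ is strongly irreducible), the irreducibility of $r_{\lambda'}(\pi)|_{G_{F'}}\cong\Sym^{n-1}r_{\lambda'}(\pi')\otimes(\mathrm{twist})$ for all $\lambda'$, and residually for almost all $\lambda'$, follows from the known image results for Hilbert modular forms. Without the potential automorphy step your argument cannot get off the ground at the other primes.

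Two smaller points. Your assertion that $\Sym^{n-1}$ of an irreducible two-dimensional representation is automatically irreducible is false (it fails for dihedral representations); the non-CM property of $\pi'$ is what rescues this. And in case (2) the paper does not simply reuse the case (1) argument with one of the two characters: it exploits both, showing via Theorem~\ref{thm: punt} that both twists have trivial determinant, so the ratio of the two characters has finite order, and after passing to the fixed field of its kernel one contradicts the statement in Theorem~\ref{thm: punt} that the generic fixed space of $h=1$ is one-dimensional. Your one-character version would still land in the stated disjunction, but the two-character argument is what the paper actually uses and is needed elsewhere (e.g.\ in the proof of Corollary~\ref{cor: divisibility by a single character is impossible} for the dihedral case).
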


\begin{proof} 
  We may assume that $(\pi,\chi)$ is not an automorphic induction. By
  Corollary~\ref{cor:induct}, we may find a sufficiently large
  $\lambda$ such that $r_{\lambda}(\pi)$ is strongly irreducible.  We
  may also assume that $\rbar_{\lambda}(\pi)$ is irreducible.  Let us
  assume that we are in case $(1)$.  After twisting, we may assuming
  that the compatible system $\cR$ is divisible by the trivial
  character.  Let $H$ denote the Zariski closure of the image of
  $r_{\lambda}(\pi)$, and let $H^0$ denote the connected component of
  $H$. Since Frobenius elements are Zariski dense, we deduce that
  every $h \in H$ has a fixed vector. We deduce from Theorem~\ref{thm:
    punt} that $n$ is odd, and that the image of $r_{\lambda}(\pi)$
  lands in the image of $\PSL_2(\Qbar_l)$ in $\GO_n(\Qbar_l)$ under
  the $(n-1)$-st symmetric power map.  In particular, the image of
  $r_{\lambda}(\pi)$ lands in $\mathrm{SO}_n(\Qbar_l)$ and
  $\det(r_{\lambda}(\pi)) = 1$.  The obstruction to lifting a
  projective homomorphism from $\PSL_2(\Qbar_l)$ to $\GL_2(\Qbar_l)$
  lies in $H^2(G_F,\Qlbar^\times)$, which vanishes by a result of Tate
  (for example, see Theorem 5.4 of~\cite{MR2461903}).  Hence there
  exists a Galois representation $s_{\lambda}: G_{F} \rightarrow
  \GL_2(\Qbar_l)$ and an isomorphism
$$r_{\lambda}(\pi) = \Sym^{n-1} s_{\lambda}  \otimes \det(s_{\lambda})^{\frac{1-n}{2}}.$$ 
We will show that $s_{\lambda}$ is potentially modular, and use known
irreducibility results for the Galois representations associated to
Hilbert modular forms to conclude.

We first show that $\ad^0(s_{\lambda}) = \Sym^2(s_{\lambda})
\det(s_{\lambda})^{-1}$ is de Rham. We see this from the following
plethysm for the standard representation $W$ of $\GL(2)$:
$$(\Sym^{n-1} W)^{\otimes 2} \otimes \det(W)^{-(n-1)} = \bigoplus_{i=0}^{n-1} \Sym^{2i} W \otimes
\det(W)^{-i}.$$ In particular, $\ad^0(s_{\lambda})$ is a constituent
of $(r_{\lambda}(\pi))^{\otimes 2}$, and hence is crystalline with
Hodge--Tate weights in the Fontaine--Laffaille range for sufficiently
large $\lambda$. We claim that in fact $\ad^0(s_\lambda)$ is regular;
in the cases $n=1$, $3$ this is immediate from the regularity of
$r_\lambda(\pi)$, and in the case $n=5$ it follows from the regularity
of $r_\lambda(\pi)$ and the
relation \[\Sym^2(\ad^0(s_\lambda))=r_\lambda(\pi)\oplus 1.\] Since
$r_{\lambda}(\pi)$ is odd, it follows immediately that $s_{\lambda}$
and hence $\ad^0(s_{\lambda})$ is also odd. Since it is 2-dimensional,
$s_\lambda$ is automatically essentially self-dual, so that
$\ad^0(s_\lambda)$ is also essentially self-dual.  If
$\sbar_{\lambda}$ was dihedral or reducible, then
$\rbar_{\lambda}(\pi)$ would be reducible, contrary to
assumption. Hence if $\lambda$ is sufficiently large,
$\ad^0(\sbar_{\lambda})|_{G_{F(\zeta_l)}}$ is irreducible, and so we
deduce from Corollary 4.5.2 and Lemma 1.4.3(2) of \cite{blggt} that
there is a finite Galois extension of totally real fields $F'/F$ such
that $\ad^0(s_{\lambda})|_{G_{F'}}$ is automorphic. It follows that up
to twist $s_{\lambda}|_{G_{F'}}$ itself is automorphic (using the
characterization of the image of the symmetric square in Theorem~A and
Corollary~B of~\cite{ramakrishnan2009selfdual}), say
$s_\lambda|_{G_{F'}}\cong r_\lambda(\pi')\otimes\psi$ for some
$\psi$. Since $r_\lambda(\pi)$ is strongly irreducible,
$r_\lambda(\pi)|_{G_{F'}}\cong\Sym^{n-1}r_\lambda(\pi')\otimes\det(r_{\lambda}(\pi'))^{\frac{1-n}{2}}$
is irreducible, so $\pi'$ cannot be of CM type. Thus, for all
$\lambda'$, we have
$r_{\lambda'}|_{G_{F'}}\cong\Sym^{n-1}r_{\lambda'}(\pi')\otimes\det(r_{\lambda}(\pi'))^{\frac{1-n}{2}}$
is irreducible, and $\rbar_{\lambda'}|_{G_{F'}}$ is irreducible for
all but finitely many $\lambda'$ (since for all but finitely many
$\lambda'$, the image of $\rbar_{\lambda'}|_{G_{F'}}$ will contain
$\SL_2(\F)$ for some finite field $\F$ by \cite[Proposition
0.1]{MR2172950}).

Suppose instead that we are in case $(2)$. After twisting, we may assume that the
compatible system $\cR|_{G_E}$ is divisible by the trivial character.
If the second character is also trivial (after this twist), we obtain
a contradiction with Theorem~\ref{thm: punt}, since the generic
multiplicity of the $h = 1$ eigenspace is $1$.  Hence the characters
are different. It follows as in the first paragraph of this proof that both the representations
$r_{\lambda}(\pi)$ and $r_{\lambda}(\pi) \otimes \chi$ have trivial
determinant for some $\chi \ne 1$, which implies that $\chi$ has
finite order. Replacing $E$ with the fixed field of the kernel of $\chi$, we reduce
to the case that both characters are trivial, which is a contradiction.
\end{proof}

\begin{cor}
  \label{cor: divisibility by a single character is impossible}Let $F$
  be a totally real field. Let $(\pi,\chi)$ be a RAESDC automorphic
representation of $\GL_n(\A_F)$ with $n \le 5$. Suppose that some
$r_\lambda(\pi)$ is reducible, say $r_\lambda(\pi)=s_\lambda\oplus
t_\lambda$. Then $\min(\dim s_\lambda,\dim t_\lambda)\ge 2$, and
neither of $s_\lambda$, $t_\lambda$ can be dihedral.\end{cor}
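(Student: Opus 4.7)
The plan is to rule out each listed possibility by contradiction, producing in each case a weakly compatible system of Hecke characters that weakly divides $\cR$ (possibly after restriction to a finite extension of $F$), and then invoking Lemma~\ref{lem: 2 divides 5}, whose conclusion asserts that $r_\mu(\pi)$ is irreducible for \emph{every} $\mu$. This directly contradicts the hypothesized reducibility of $r_\lambda(\pi)$.

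Suppose first that $\dim s_\lambda = 1$, so that $s_\lambda = \psi_\lambda$ is a character of $G_F$. Since $r_\lambda(\pi)$ is de Rham above $\ell$ and unramified outside a finite set $S$, the same holds for $\psi_\lambda$; hence $\psi_\lambda$ is the $\lambda$-adic realization of a (unique) algebraic Hecke character $\psi$ of $F$, which gives rise to a weakly compatible system $\cS$. For each finite place $v \notin S$, the characteristic polynomial of $r_\mu(\pi)(\Frob_v)$ is independent of $\mu$, so the divisibility (at $\mu = \lambda$) of this polynomial by $X - \psi(\Frob_v)$ propagates to every $\mu$; consequently $\cS$ weakly divides $\cR$. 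Lemma~\ref{lem: 2 divides 5}(1) now applies and yields the contradiction.

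For the dihedral case, suppose $s_\lambda$ is dihedral, so $s_\lambda \cong \Ind_{G_K}^{G_F} \chi_\lambda$ for some quadratic extension $K/F$ and some character $\chi_\lambda$ of $G_K$. Then $s_\lambda|_{G_K} \cong \chi_\lambda \oplus \chi_\lambda^{c}$, where $c$ is the nontrivial element of $\Gal(K/F)$; both summands are $\lambda$-adic realizations of algebraic Hecke characters over $K$ and therefore extend to compatible systems $\cS_1, \cS_2$ of $G_K$. The same $\lambda$-independence argument shows that $\cS_1 \oplus \cS_2$ weakly divides $\cR|_{G_K}$, so Lemma~\ref{lem: 2 divides 5}(2) applied with $E = K$ again contradicts the reducibility of $r_\lambda(\pi)$.

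The substantive technical content is the passage from a single-$\lambda$ splitting to a global compatible-system divisibility. This rests on two standard facts: a de Rham Galois character unramified almost everywhere arises from an algebraic Hecke character, and the Frobenius characteristic polynomials in the system $\cR$ are themselves independent of $\lambda$ on an explicit cofinite set of places. Once these are in hand, the corollary is an immediate consequence of Lemma~\ref{lem: 2 divides 5}.
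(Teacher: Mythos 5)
Your proposal is correct and follows essentially the same route as the paper: realize the one-dimensional summand (or, in the dihedral case, the two characters obtained after restricting to the quadratic extension) as algebraic Hecke characters, deduce weak divisibility of the compatible system, and invoke Lemma~\ref{lem: 2 divides 5} to conclude that $r_\mu(\pi)$ is irreducible for all $\mu$, contradicting the assumed reducibility. The extra detail you supply on propagating the divisibility of Frobenius characteristic polynomials across the system is exactly what the paper leaves implicit.
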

\begin{proof}Suppose that without loss of generality $\dim
  s_\lambda=1$. Since $r_\lambda(\pi)$ is de Rham, so is $s_\lambda$,
  so by e.g. Lemma 4.1.3 of \cite{cht} there is an algebraic character $\chi$ of $\A_F^\times/F^\times$
  such that $s_\lambda=r_\lambda(\chi)$. Then the weakly compatible system
  $\{r_\lambda(\chi)\}$ weakly divides $\{r_\lambda(\pi)\}$, so by
  Lemma \ref{lem: 2 divides 5}, we see that $r_\lambda(\pi)$ is irreducible for all
  $\lambda$, a contradiction.

Suppose now that $s_\lambda$ is dihedral. Then there is a quadratic
extension $E/F$ such that $s_\lambda|_{G_E}$ is reducible, so is a sum
of two de Rham characters. Arguing in the same way, we again obtain a
contradiction from Lemma \ref{lem: 2 divides 5}.
  \end{proof}
  We now prove Theorem \ref{thm: all irred char 0}. We proceed by
  contradiction, assuming that for some $\lambda$ we have
  $r_\lambda(\pi)=s_\lambda\oplus t_\lambda$. By Corollary \ref{cor:
    divisibility by a single character is impossible}, it suffices to
  consider the cases that both $s_\lambda$ and $t_\lambda$ are
  irreducible, that $n=4$ or $5$, and that $\dim s_\lambda=2$.

\medskip
\subsection{The case $n=5$} Since $5$ is odd, we see from Lemma
\ref{lem: odd-dimensional implies odd} that $r_\lambda(\pi)$ factors
through $\GO_5$ with even multiplier. Since $2\ne 3$, we see that $s_\lambda$
factors through $\GO_2$ with even multiplier, so $s_\lambda$ is
dihedral. This contradicts Corollary \ref{cor: divisibility by a single character is impossible}.  \medskip

\subsection{The $n=4$ symplectic case}\label{subsec:symplectic char 0}Suppose that $r_\lambda(\pi)$
is symplectic with odd multiplier. If $V$ is a (generalized) symplectic representation of a group $G$ with multiplier $\chi$, then
there is a surjection $\wedge^2 V \rightarrow \chi$. Hence the virtual representation
$\wedge^2(V) - \chi$ is an actual representation of $G$.
In particular, if $\cR$ is the compatible system of Galois representations associated to $(\pi,\chi)$,
then $\cA:= \wedge^2(\cR) - \chi$ is a compatible system of Galois representations such that
$a_{\lambda}:G_F \rightarrow \GL_5(\Qbar_l)$ has image in $\GO_5(\Qbar_l)$. Moreover, this
compatible system is odd (automatic since $5$ is odd) and regular.

Since $r_\lambda(\pi)=s_\lambda\oplus t_\lambda$, we have
$$a_{\lambda} \oplus \chi_{\lambda} = s_{\lambda} \otimes t_{\lambda} \oplus 
\det(s_{\lambda}) \oplus \det(t_{\lambda}).$$ In particular, as there
are two characters on the right hand side, the representation
$a_\lambda$ contains a character, and we deduce that the compatible
system $\cA$ is weakly divisible by the compatible system of a character.

For a density one set of primes $\lambda'$, the representation
$r_{\lambda'}(\pi)$ is irreducible.  For such a prime $\lambda'$, let
$G$ denote the Zariski closure of the image, and $G^0$ the connected
component of the identity. Let $Z$ denote the center of $H$. If $G^0$
acts reducibly, then $r_{\lambda'}$ is potentially reducible and hence
$(\pi,\chi)$ is an automorphic induction by
Corollary~\ref{cor:induct}, and we would be done. Hence $G^0$ acts
irreducibly.
Let $\gog$ be the Lie algebra of $G^0$,
and let $\gog = \gh \oplus \gz$ for $\gh$ semi-simple and $\gz$ central. We deduce
that $\gh$ acts irreducibly,
and hence $\gh = \spf_4$ or $\slf_2 \subset \spf_4$ acting through the third symmetric power representation.
In either case, the corresponding representation in dimension $5$ of $\sof_5 = \spf_4$ is irreducible, and
thus $a_{\lambda'}$ is also irreducible, even after restricting to any finite extension $E/F$.
Arguing as in the proof of Lemma~\ref{lem: 2 divides 5}, we deduce
that there is a finite Galois extension $F'/F$ of totally real fields
and a RAESDC automorphic representation $\pi'$ of $\GL_2(\A_{F'})$
such that, for all $\lambda''$, we have
$a_{\lambda''}|_{G_{F'}}\cong\Sym^4r_{\lambda''}(\pi')\otimes\det(r_{\lambda''}(\pi'))^{-2}$.
Since $a_{\lambda'}$ is irreducible, $\pi'$ cannot be of CM type, so
that in fact $a_{\lambda''}$ is irreducible for all $\lambda''$. This
contradicts the reducibility of $a_\lambda$.\medskip

\subsection{The $n=4$ orthogonal case}Suppose finally that
$r_\lambda(\pi)$ is orthogonal with even multiplier, and that
$r_\lambda(\pi)=s_\lambda\oplus t_\lambda$. By Corollary \ref{cor:
  divisibility by a single character is impossible} both $s_\lambda$
and $t_\lambda$ are non-dihedral two-dimensional representations.

Since $r_\lambda(\pi)$ factors through $\GO(4)$, it must either be the
case that each of $s_\lambda$ and $t_\lambda$ factors through $GO(2)$,
or that the orthogonal pairing identifies $s_\lambda$ with
$t_\lambda^\vee\otimes\epsilon^{-3}r_\lambda(\chi)$. In the former
case both $s_\lambda$ and $t_\lambda$ would be dihedral, a
contradiction, so we must be in the latter case. Since we also have
$t_\lambda^\vee\cong t_\lambda\otimes\det(t_\lambda)^{-1}$, we may
write \[r_\lambda(\pi)\cong t_\lambda\oplus
t_\lambda\otimes\psi,\]where
$\psi=\epsilon^{-3}r_\lambda(\chi)\det(t_\lambda)^{-1}$.  Since
$r_\lambda(\pi)$ is de Rham, we see that $t_\lambda$ and $\psi$ are de
Rham. Thus $\psi$ is pure of some weight. However, the representation
$r_\lambda(\pi)$ is pure, so if $v\nmid l$ is a finite place of $F$
with $\pi_v$ unramified, then all of the eigenvalues of
$r_\lambda(\pi)(\Frob_v)$ are Weil numbers of the same weight. This
implies that $\psi$ must be pure of weight $0$; but this contradicts
the regularity of $r_\lambda(\pi)$.

\section{Representations with small image}\subsection{}With an eye to
proving that $\rbar_\lambda(\pi)$ is irreducible for all but finitely
many $\lambda$, in this section, we prove a variety of results on
residually reducible or dihedral 2-dimensional representations, using
class field theory and the main conjecture of Iwasawa theory.

Fix number fields $F$, $M$ such that $|\Hom_\Q(F,M)|=[F:\Q]$. Fix a
positive integer $n$ and an element $\sigma\in(\Z^n)^{\Hom(F,M)}$. Let
$\lambda$ be a place of $F$ with residue characteristic $l$ and
residue field $k_\lambda$, and let
$\rho_\lambda:G_F\to\GL_n(\Mbar_\lambda)$ be a continuous de Rham
representation. Then we say that $\rho$ has Hodge-Tate weights
$\sigma$ if for each embedding $\tau\in\Hom(F,M)$ inducing a place $v$
of $F$ via $F\into M\into M_\lambda$, the $\tau$-Hodge-Tate weights of
$\rho_\lambda|_{G_{F_v}}$ are $\sigma_\tau$. Similarly, we say that a
continuous representation $\rhobar_\lambda:G_F\to\GL_n(\kbar_\lambda)$
is Fontaine-Laffaille of weight $\sigma$ if $l$ is unramified in $F$,
and for each $v|l$, $\rhobar_\lambda|_{G_{F_v}}$ admits a crystalline
lift with Hodge-Tate weights $\sigma$ in the Fontaine--Laffaille range (equivalently, it has
Fontaine-Laffaille weights determined by $\sigma$ in the usual way).

\begin{lemma} \label{lemma:characters} Let $F/\Q$ be a number field,
  and let $\sigma \in \Z^{\Hom(F,M)}$ denote a fixed weight.  Suppose
  that there exist infinitely many primes $\lambda$ such that $G_F$
  admits a character: $\chi_{\lambda}: G_F \rightarrow
  \kbar_\lambda^\times$ with the following properties:
\begin{enumerate}
\item $\chi_{\lambda}$ is Fontaine-Laffaille of weight $\sigma$.
\item The conductor of $\chi_{\lambda}$ away from $l$ is bounded independently of $l$.
\end{enumerate}
Then, for infinitely many $\lambda$, there exists a finite order
character $\phi$ of $G_F$ such that $\chi_{\lambda} \phi^{-1} =
\overline{\psi_{\lambda}}$ for a fixed algebraic Hecke character
$\psi$ of weight $\sigma$. If $F$ does not contain a CM field, then
$\psi_{\lambda}$ is a power of the cyclotomic character times a finite
order character.
\end{lemma}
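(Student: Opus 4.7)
The plan is to use global class field theory to recast each Galois character $\chi_\lambda$ as an id\`ele class character, exploit the Fontaine--Laffaille hypothesis and the bounded conductor to force it to factor through a (nearly) fixed finite ray class group, and then use the fact that infinitely many primes $\lambda$ satisfy a common constraint on its restriction to inertia at $l$ to promote the mod $\lambda$ data to a characteristic zero object (namely, an algebraic Hecke character). First, by local class field theory, the Fontaine--Laffaille hypothesis of weight $\sigma$ forces $\chi_\lambda$ to be tame at every $v|l$ with restriction to $\CO_{F_v}^\times$ given explicitly by $x \mapsto \prod_{\tau:F_v\hookrightarrow M_\lambda} \overline{\tau(x)}^{-\sigma_\tau}$; combined with the bound $\n_0$ on the conductor away from $l$, the character $\chi_\lambda$ factors through the finite ray class group
\[ C_F^{\n_0 l} := \A_F^\times / F^\times U^{\n_0} U_\infty^0 \prod_{v|l}(1+\gm_v). \]

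The key step, and the main obstacle, is to produce a single algebraic Hecke character $\psi$ of infinity type $\sigma$ and conductor dividing $\n_0$. Such a $\psi$ exists if and only if $\prod_{\tau:F\hookrightarrow M} \tau(u)^{\sigma_\tau}=1$ for every global unit $u\in\CO_F^\times$ with $u\equiv 1\pmod{\n_0}$ (and of appropriate sign, depending on the convention for $U_\infty^0$). For each $\lambda$ in our infinite family, the existence of the character $\chi_\lambda$ on the quotient $C_F^{\n_0 l}$ is equivalent, via the exact sequence relating $\prod_{v|l} k_v^\times$ to $C_F^{\n_0 l}$ and $C_F^{\n_0}$, to the vanishing of the explicit formula above on the image of such units modulo $\lambda$, that is, to the congruence $\prod_\tau \tau(u)^{\sigma_\tau}\equiv 1\pmod\lambda$. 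Since we have infinitely many such $\lambda$ (whose residue characteristics we may take tending to infinity), and since $\prod_\tau \tau(u)^{\sigma_\tau}$ is a fixed algebraic number, this forces $\prod_\tau\tau(u)^{\sigma_\tau}=1$ identically, and Weil's construction then provides the desired $\psi$.

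With $\psi$ fixed, the ratio $\chi_\lambda\cdot \overline{r_\lambda(\psi)}^{-1}$ is unramified at every $v|l$ (the Fontaine--Laffaille contributions cancel) and has conductor dividing the fixed ideal $\n_0\cdot \mathfrak f(\psi)$, so factors through the fixed finite ray class group $C:=C_F^{\n_0 \mathfrak f(\psi)}$. For $l$ coprime to $|C|$, reduction mod $\lambda$ yields a bijection between $\Qbar^\times$-valued and $\kbar_\lambda^\times$-valued characters of $C$; since the former set is finite, pigeonhole provides a finite order character $\phi:G_F\to\Qbar^\times$ such that $\chi_\lambda=\overline{\phi\cdot r_\lambda(\psi)}$ for infinitely many $\lambda$, giving the main conclusion. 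Finally, if $F$ contains no CM subfield, then the classical theorem of Weil asserts that every algebraic Hecke character of $F$ is a finite-order twist of an integer power of the norm character; translated to the Galois side, $r_\lambda(\psi)$ is therefore an integer power of the cyclotomic character times a finite-order character, yielding the last assertion of the lemma.
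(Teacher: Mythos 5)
Your proposal is correct and follows essentially the same route as the paper: translate $\chi_\lambda$ to an id\`ele class character, use the Fontaine--Laffaille description at $v\mid l$ together with the bounded conductor to deduce the congruence $\prod_\tau\tau(u)^{\sigma_\tau}\equiv 1\pmod\lambda$ on a finite-index subgroup of $\cO_F^\times$, let the infinitude of $\lambda$ force this to be an equality so that Weil's criterion produces $\psi$, and finish by the finiteness of finite-order characters of bounded ramification (plus Weil's classification for the non-CM statement). Your write-up is somewhat more explicit about the ray class groups and the character-lifting step, but the underlying argument is the paper's.
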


\begin{proof} The last sentence follows from the rest of the result by
  Weil's classification of algebraic Hecke characters. Note that since
  there are only finitely many finite order characters of $G_F$ with
  fixed ramification, it suffices to show that there is some algebraic
  Hecke character of weight $\sigma$. Without loss of generality we
  may assume that $M/\Q$ is Galois, and then by (the proof of) Weil's
  classification, it is enough to check that for any $g\in\Gal(M/\Q)$,
  $g\sigma\otimes 1$ annihilates $\cO_F^\times\otimes\R$. Replacing
  $\sigma$ by $g\sigma$ and each $\psi_\lambda$ by $g\psi_\lambda$, we
  see that it is enough to check that $\sigma\otimes 1$ annihilates
  $\cO_F^\times\otimes\R$.

  Regard each $\psi_\lambda$ as a character of $\A_F^\times/F^\times$ by
  class field theory. Since the ramification of the $\psi_\lambda$
  outside of $l$ is bounded independently of $l$, we see that there is
  a finite index subgroup $U$ of $\cO_F^\times$ such that each
  $\psi_\lambda|_U$ is just the composition of $\sigma$ and reduction
  mod $\lambda$. Thus, for any $u\in U$, we see that $(\sigma(u)-1)$ is
  divisible by $\lambda$; since this holds for infinitely many
  $\lambda$, we see that $\sigma(u)=1$. Since $U$ has finite index in
  $\cO_F^\times$, the result follows.\end{proof}

 \begin{corollary} \label{cor: dihedral even finiteness} Let $F$ be a
   totally real field.  Suppose that there are infinitely many primes
   $l$ and $2$-dimensional dihedral representations $\sbar_{\lambda}:
   G_F \rightarrow \GL_2(\kbar_\lambda)$ of fixed distinct
   Fontaine-Laffaille weights and fixed tame level. Then:
\begin{enumerate}
\item For all but finitely many $l$, $\sbar_{\lambda}$ is induced from
  a quadratic CM extension $F'/F$ unramified at $l$. In particular,
  $\sbar_\lambda$ is (totally) odd. (The field $F'$ may depend on $l$.)
\item For infinitely many $l$, $\sbar_{\lambda}$ is the reduction of
  the Galois representation associated to a fixed RAESDC automorphic representation
$\pi_{s}$ of $\GL_2(\A_F)$ which arises from the induction of an algebraic Hecke character 
for $\A^{\times}_{F'}$ for some CM extension $F'/F$.
\end{enumerate}
\end{corollary}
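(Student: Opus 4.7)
The plan is to deduce (1) and (2) from Lemmas \ref{lem: Fontaine-Laffaille dihedral residual representations are unramified} and \ref{lemma:characters} by a sequence of pigeonhole arguments.

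For part (1), write $\sbar_\lambda \cong \Ind_{G_{F'_\lambda}}^{G_F}\chi_\lambda$ for some quadratic extension $F'_\lambda/F$ and a character $\chi_\lambda$ of $G_{F'_\lambda}$. Since the Fontaine--Laffaille weights of $\sbar_\lambda$ are distinct and fixed, for $l$ large the local crystalline lifts have Hodge--Tate weight differences less than $(l-2)/2$, so the argument of Lemma \ref{lem: Fontaine-Laffaille dihedral residual representations are unramified} applied locally shows that $l$ is unramified in $F'_\lambda$. Combined with the bounded tame level of $\sbar_\lambda$, this forces the extensions $F'_\lambda$ to lie in a finite set. To show that each $F'_\lambda$ is CM, suppose for contradiction that $F'_\lambda$ is totally real for infinitely many $\lambda$; by pigeonhole some single totally real quadratic $F'/F$ occurs for infinitely many $\lambda$. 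The characters $\chi_\lambda$ of $G_{F'}$ are Fontaine--Laffaille of a fixed weight (inherited from $\sbar_\lambda$) and have bounded tame level, so Lemma \ref{lemma:characters} produces a fixed algebraic Hecke character $\psi$ of $F'$ such that $\chi_\lambda$ equals $\bar\psi_\lambda$ times a finite order character for infinitely many $\lambda$. Since $F'$ is totally real, Weil's classification forces $\psi$ to be a finite order character times a power of the norm, and hence $\chi_\lambda$ has the same weight at both embeddings above each real place of $F$; the induced representation $\sbar_\lambda$ then has repeated Fontaine--Laffaille weights, a contradiction. Thus $F'_\lambda$ is CM for all but finitely many $\lambda$; since complex conjugation $c_v$ is never an element of $G_{F'_\lambda}$, the standard formula $\det(\Ind\chi_\lambda)(c_v) = -\chi_\lambda(c_v^2) = -1$ shows $\sbar_\lambda$ is totally odd.

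For part (2), the finiteness of the possibilities for $F'_\lambda$ gives, by pigeonhole, a fixed CM extension $F'/F$ arising for an infinite set $S$ of primes. Applying Lemma \ref{lemma:characters} to the family $\{\chi_\lambda\}_{\lambda\in S}$ produces a fixed algebraic Hecke character $\psi$ of $F'$ and, for infinitely many $\lambda \in S$, a finite order character $\phi_\lambda$ of $G_{F'}$ of bounded conductor such that $\chi_\lambda = \bar\psi_\lambda \cdot \phi_\lambda$. A second pigeonhole, using that only finitely many such $\phi_\lambda$ exist, yields a fixed $\phi$ with $\chi_\lambda = \overline{\psi\phi}_\lambda$ for infinitely many $\lambda$. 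Let $\pi_s$ be the cuspidal automorphic representation of $\GL_2(\A_F)$ obtained by automorphic induction of $\psi\phi$ from $F'$: cuspidality follows from the irreducibility of $\sbar_\lambda$ (which forces $\psi\phi \neq (\psi\phi)^c$), regularity and algebraicity follow from the distinct Fontaine--Laffaille weights of $\sbar_\lambda$, and essential self-duality is automatic in dimension two. By construction $\sbar_\lambda$ is the reduction of the Galois representation attached to $\pi_s$ for infinitely many $\lambda$.

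The main obstacle is the joint control over the auxiliary data---the quadratic extension $F'_\lambda$, the algebraic Hecke character $\psi$, and the finite order twist $\phi_\lambda$---each of which \emph{a priori} varies with $\lambda$. The finiteness of the possibilities (from bounded ramification, via Lemma \ref{lem: Fontaine-Laffaille dihedral residual representations are unramified} and Lemma \ref{lemma:characters}) together with Weil's classification allow each piece to be stabilized via successive pigeonhole arguments; the same classification is also what rules out the totally real case in part (1).
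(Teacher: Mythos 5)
Your proof is correct and follows essentially the same route as the paper: Lemma \ref{lem: Fontaine-Laffaille dihedral residual representations are unramified} to force the quadratic extension to be unramified at $l$ and hence to lie in a finite set, pigeonhole to fix it, and Lemma \ref{lemma:characters} both to rule out the non-CM case via the distinctness of the Fontaine--Laffaille weights and to produce the fixed Hecke character in part (2). One small logical slip: your dichotomy ``totally real versus CM'' for the quadratic extension $F'_\lambda/F$ is incomplete, since a quadratic extension of a totally real field can have mixed signature (real at some archimedean places, complex at others). The fix is immediate and is what the paper does: if $F'$ is not CM it has a real embedding, hence contains no CM subfield, so the final clause of Lemma \ref{lemma:characters} applies verbatim and yields the same contradiction with the distinctness of the weights; your argument as written only needs ``totally real'' replaced by ``not CM'' at that point.
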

 \begin{proof}
   Each $\sbar_\lambda$ is induced from some quadratic extension
   $F_\lambda/F$ unramified outside of $l$ and a fixed set of
   places. By Lemma \ref{lem: Fontaine-Laffaille dihedral residual
     representations are unramified}, for all but finitely many
   $\lambda$, $F_\lambda/F$ is unramified at places dividing $l$. Thus
   there are only finitely many possible extensions $F_\lambda/F$, so
   it suffices to show that any extension $F'/F$ which occurs
   infinitely often is CM. However, if $F'$ is not CM, then it does
   not contain a CM subfield, and Lemma \ref{lemma:characters}
   (applied to the characters of $G_{F'}$ from which the $s_\lambda$
   are induced) contradicts the assumption that $s_\lambda$ has
   distinct Fontaine-Laffaille weights.

   The second part now follows from Lemma
   \ref{lemma:characters} in the same way.
\end{proof}
 
\begin{lemma}\label{lem: only finitely many exceptional GL_2} Let $F$
  be a number field. Fix a dimension $n$ and distinct
  Fontaine--Laffaille weights, and fix a bound on the order of the
  projective images of the Galois representations under consideration
  (for example, suppose that the projective images are all $A_4$,
  $S_4$ or $A_5$). Then there are only finitely many $\lambda$ such
  that there exists an irreducible representation
 $$\rhobar_{\lambda}: G_F \rightarrow \GL_n(\kbar_\lambda)$$
 such that $\rhobar_\lambda$ has these fixed distinct Fontaine-Laffaille
 weights, and has projective image of bounded order. \end{lemma}
\begin{proof}Suppose to the contrary that there are infinitely many
  such representations. By Fontaine-Laffaille theory, we see that the
  order of the projective image of $\rhobar_\lambda$ grows at least linearly
  with $l$; but it is also bounded, a contradiction.
\end{proof}

\begin{lemma}\label{lem: only finitely many reducible GL_2} Let $F$ be
 a totally real field. Then there are only finitely many $\lambda$ such
 that there exists an irreducible representation
$$\rho_{\lambda}: G_F \rightarrow \GL_2(\Mbar_\lambda)$$
such that $\det(\rho_\lambda(c_v))$ is independent of any infinite place $v$
of $F$, $\ad^0(\rho_\lambda)$ is crystalline with fixed distinct Hodge-Tate weights and fixed tame
level, and $\rhobar_{\lambda}$ is reducible.
\end{lemma}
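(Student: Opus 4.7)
The plan is to argue by contradiction: assume the set $\Lambda$ of primes $\lambda$ admitting such $\rho_\lambda$ is infinite. For each $\lambda \in \Lambda$, write $\rhobar_\lambda^{\semis} = \chi_{1,\lambda} \oplus \chi_{2,\lambda}$. Since $\ad^0(\rhobar_\lambda)^{\semis} \cong (\chi_{1,\lambda}\chi_{2,\lambda}^{-1}) \oplus \mathbf{1} \oplus (\chi_{2,\lambda}\chi_{1,\lambda}^{-1})$ and $\ad^0(\rho_\lambda)$ is crystalline with fixed distinct Hodge--Tate weights and fixed tame conductor, the character $\eta_\lambda := \chi_{1,\lambda}\chi_{2,\lambda}^{-1}$ is Fontaine--Laffaille of some fixed nonzero weight and has tame conductor bounded independently of $\lambda$.

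Applying Lemma~\ref{lemma:characters} to the family $\{\eta_\lambda\}$ and passing to an infinite subset of $\Lambda$, we may assume $\eta_\lambda = \overline{\psi}_\lambda$ for a single fixed algebraic Hecke character $\psi$ of $F$ (the finite-order ambiguity in the lemma is absorbed by further restriction to an infinite subset). Since $F$ is totally real and hence contains no CM subfield, Weil's classification forces $\psi = \epsilon^k \phi$ for a nonzero integer $k$ and a fixed finite-order character $\phi$ of $G_F$. After possibly interchanging $\chi_{1,\lambda}$ and $\chi_{2,\lambda}$, we may take $k \geq 1$. The hypothesis that $\det \rho_\lambda(c_v)$ is independent of $v \mid \infty$ implies that $\eta_\lambda(c_v) = (-1)^k \overline{\phi}(c_v)$ is independent of $v$, so $\phi$ has a well-defined parity.

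Since $\rho_\lambda$ is irreducible with $\chi_{1,\lambda} \neq \chi_{2,\lambda}$ for large $\lambda$ (as $\psi$ has nonzero weight), Ribet's lemma produces $G_F$-stable lattices in $\rho_\lambda$ with respect to which $\rhobar_\lambda$ sits as a non-split extension of one $\chi_{i,\lambda}$ by the other; these yield nonzero classes in both $H^1_\Sigma(G_F, \overline{\psi}_\lambda)$ and $H^1_\Sigma(G_F, \overline{\psi}_\lambda^{-1})$, where the global Selmer condition $\Sigma$ imposes Fontaine--Laffaille conditions at primes above $l$ (inherited from the crystallinity of $\ad^0(\rho_\lambda)$, up to a character twist absorbed into the $\chi_{i,\lambda}$) together with unramified conditions outside a fixed finite set of places of $F$.

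The principal difficulty is to bound at least one of these Selmer groups using the main conjecture of Iwasawa theory for totally real fields (Wiles). Choosing the side whose parity matches $k$ modulo $2$, the main conjecture bounds the relevant $\lambda$-adic Selmer group by an $l$-adic $L$-value which, via the interpolation property, coincides up to an $l$-adic unit with a classical $L$-value of the form $L(\phi^{\pm 1}, 1 \mp k)$. This classical $L$-value is expressible via a generalized Bernoulli number $B_{k, \phi^{\pm 1}}$, which is nonzero precisely because of the parity of $\phi$ established above, and is therefore a fixed nonzero algebraic number. Consequently, this $L$-value is a $\lambda$-adic unit for all but finitely many $\lambda$, forcing the corresponding Selmer group to vanish and contradicting the non-triviality of the Ribet class. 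Hence $\Lambda$ must be finite, as required.
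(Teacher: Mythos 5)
Your setup—reducing via Lemma~\ref{lemma:characters} to a fixed character $\overline{\psi}_\lambda$ with $\psi=\epsilon^k\phi$, applying Ribet's lemma to produce extension classes, and invoking Wiles's main conjecture—is exactly the paper's strategy. But the final step, ``choosing the side whose parity matches $k$ modulo $2$,'' contains a genuine gap, in fact two.

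First, there is no choice to make: since $\phi(c_v)=\pm1$, the characters $\phi$ and $\phi^{-1}$ have \emph{identical} parity at every infinite place, so $L(1-k,\phi)$ and $L(1-k,\phi^{-1})$ vanish or not together. The hypothesis on $\det\rho_\lambda(c_v)$ only tells you that $\omega^k\overline{\phi}$ is totally even or totally odd; it does not tell you which, and the totally even case (i.e.\ $\det\rho_\lambda(c_v)=+1$) is permitted by the hypotheses and does occur in the paper's application. In that case the relevant $L$-value $L(1-k,\phi^{\pm1})$ \emph{vanishes} (the generalized Bernoulli number $B_{k,\chi}$ is zero when $\chi(-1)\neq(-1)^k$), so your concluding non-vanishing assertion fails, and the argument collapses precisely where it is needed. (Your alternative reading $L(\phi^{-1},1+k)$ is not a critical value interpolated by the $p$-adic $L$-function, so the main conjecture says nothing about it.)

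Second, the two Ribet classes are not interchangeable: only the ``lower'' extension class, the one in $H^1(F,\omega^{-k}\phi^{-1})$, is forced by Fontaine--Laffaille/ordinarity to be trivial at $v\mid l$ and hence to lie in the strict Selmer group that Wiles's theorem controls; the ``upper'' class in $H^1(F,\omega^{k}\phi)$ is only known to lie in the relaxed (dual) Selmer group. The paper must therefore treat the even case separately: for $k>1$ it converts the dual Selmer group into a strict one for $\omega^{1-k}\psi^{-1}$ via the Greenberg--Wiles global duality formula and then applies the main conjecture to $L(2-k,\psi^{-1})$, and for $k=1$ even this fails (the local vanishing statement $H^1(F_v,\omega^m\chi)=0$ needs $m\notin\{0,1\}$), so it falls back on the class in $H^1_{\Sel}(F,\omega^{-1}\psi^{-1})$ and the finiteness of $K_2(\OL_E)$. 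Your proof needs both of these supplementary arguments; as written it establishes only the totally odd case.
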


\begin{proof} Assume not, so that infinitely many such representations
  exist. We may assume that $l$ is odd, unramified in $F$ and is
  sufficiently large that $\ad^0(\rho_\lambda)$ has Hodge-Tate weights
  in the Fontaine--Laffaille range. By Lemma~\ref{lemma:characters},
  we may deduce that, for infinitely many $\lambda$, there is an
  isomorphism $(\rhobar_{\lambda})^{\sss} \simeq \psi \omega^k \oplus
  1 \oplus \psi^{-1}\omega^{-k}$ for a fixed integer $k\ne 0$ and a
  fixed finite order character $\psi$. By Fontaine--Laffaille theory,
  we deduce that $\ad^0(\rho_\lambda)$ is ordinary at all primes
  $v|l$.

  Suppose that $\rhobar_\lambda\cong \psi\omega^k\phi\oplus
  \phi$. Applying Ribet's lemma, we obtain integral lattices in
  $\rho_\lambda$ which give nonzero classes in the groups
  $\Ext^1_{G_F}(\psi\omega^k\phi,\phi)$ and
  $\Ext^1_{G_F}(\phi,\psi\omega^k\phi)$. Consider the corresponding
  lattices in $\ad^0r_\lambda$. Looking at the ``top extension'' in
  $\ad^0(\rhobar_\lambda)$, we obtain classes in the
  groups $$H^1(F,\omega^k \psi), \qquad H^1(F,\omega^{-k}
  \psi^{-1}).$$ These classes are nonzero since $l\ne 2$, and are
  unramified outside $N$, $l$, and $\infty$ by construction. Moreover,
  in the second case, by the ordinarity of $\ad^0(\rho_\lambda)$ the
  class is also unramified and consequently trivial at all $v|l$.  If
  $M$ is a $G_{F}$-module, we may define a set of Selmer conditions as
  follows.  Let $\Sel = \{\Sel_v\}$ where $\Sel_v \subset H^1(G_v,M)$
  is defined to be:
\begin{enumerate}
\item $\Sel_v = H^1(G_v/I_v,M^{I_v})$ if $v \nmid l$.
\item $\Sel_v = 0$ if $v | l$.
\end{enumerate}
We note the following:
\begin{prop} Fix a place $v$, an integer $m \notin \{0,1\}$,
and a finite order character $\chi$.  Let $\omega$ denote the mod-$l$
cyclotomic character. Then
$H^1(F_v,\omega^m \cdot \chi) = 0$
for sufficiently large $l$.
\end{prop}
\begin{proof} We may assume that $v \nmid l$. Let $q = N(v)$ and let $p$ be the residue characteristic of $v$.
There is an equality
$$|H^1(F_v,M)| = |H^0(F_v,M)| |H^2(F_v,M)| =
|H^0(F_v,M)| |H^0(F_v,M^*)|.$$
Hence, if $H^1(F_v,\omega^m \cdot \chi)$ is non-trivial, then $\chi$ is unramified at $v$, and
$$\omega^m(\Frob_v) \chi(\Frob_v) \in \{1,q\}.$$
Since $\omega(\Frob_v) = q$, it follows that $\chi(\Frob_v) \in
\{q^{-m}, q^{1-m}\}$. If $\chi$ has order $d$, it follows that
$$(q^{dm} - 1)(q^{d(m-1)} - 1) \equiv 0 \pmod{l}.$$
Since $m \notin \{0,1\}$, this equality can only occur for finitely many $l$.
\end{proof}
It follows that for sufficiently large $l$ the classes constructed above lie in the Selmer groups
$H^1_{\Sel^*}(F,\omega^k \psi)$ and $H^1_{\Sel}(F,\omega^{-k} \psi^{-1})$ respectively,
where $\Sel^*$ is the dual Selmer condition (with no restriction on the class at $v|l$), with the
possible exception of the class in $H^1(F,\omega \psi)$ when $k = 1$.
We now consider three cases.
\begin{enumerate}
\item Suppose that $\omega^k \psi$  is (totally) odd. Then
the main conjecture for totally real fields as proven by Wiles~\cite{MR1053488} shows that (for $l$ odd) $l$ divides
$|H^1_{\Sel}(F,\omega^{-k} \psi^{-1})|$  if and only if
$l$ divides
$$L(0,\omega^{-k}   \psi^{-1}) \equiv L(-k,  \psi^{-1}) \ne 0.$$
\item Suppose that $\omega^k \psi$ is even and $k > 1$. Then, by
 Theorem 2.19 of \cite{MR1605752} (the global duality formula for
 Selmer groups, which is a reflection formula in this case), we deduce that
$$|H^1_{\Sel^*}(F,\omega^{k} \psi)| =  |H^1_{\Sel}(F,\omega^{1-k} \psi^{-1})|.$$
Once more by Wiles this group is non-trivial if and only if
$l$ divides
$$L(0,\omega^{1-k} \psi^{-1}) \equiv L(1-k, \psi^{-1}) \ne 0.$$
\item Suppose that $\omega^k \psi$ is even and $k = 1$. Then we still have a class
in $H^1_{\Sel}(F,\omega^{-1} \psi^{-1})$. Let $E = F(\psi)$.
Then, by restriction, we obtain a class in 
$$H^1_{\Sel}(F,\omega^{-1} \psi^{-1}) \hookrightarrow H^{1}_{\Sel}(E,\omega^{-1}) \hookrightarrow H^1_{\Sel}(E(\zeta_l),\F_l)^{\omega^{-1}}.$$
The latter group is isomorphic to the $\omega^{-1}$-part of the
$l$-torsion of the class group of $E(\zeta_l)$.  Yet, by Theorem~5.4
of~\cite{MR999397}, the $\omega^{-1}$ part of this group injects into
$K_2(\OL_E)/l$.  Since $K_2(\OL_E)$ is finite, this group is trivial
for $l$ sufficiently large.
\end{enumerate}
In each case, we deduce that $l$ divides a fixed non-zero rational number which
is independent of $l$, and hence $l$ is bounded.
\end{proof}

\section{Residual irreducibility for all but finitely many primes}

\subsection{}We now
bootstrap our previous arguments to prove the following result.
\begin{thm}\label{thm: all but finitely many irred over Q}Let $F$ be a
  totally real field. Suppose that $(\pi,\chi)$ is a
  RAESDC automorphic representation of $\GL_n(\A_F)$ with $n\le
  5$. Then all but finitely many of the residual representations
  $\rbar_{\lambda}(\pi)$ are irreducible. \end{thm}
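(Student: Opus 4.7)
Suppose for contradiction that $\rbar_{\lambda}(\pi)$ is reducible for infinitely many $\lambda$. By Theorem~\ref{thm: all irred char 0} each $r_{\lambda}(\pi)$ is itself irreducible, and by Corollary~\ref{cor:induct} we may assume that $(\pi, \chi)$ is not an automorphic induction. Passing to a subsequence, fix the dimension $1 \leq k \leq n/2$ of some irreducible summand $\bar{r}_{\lambda} \subset \rbar_{\lambda}(\pi)$ (so $k \in \{1, 2\}$, since $n \leq 5$). The plan is to mirror, step by step, the case analysis used to prove Theorem~\ref{thm: all irred char 0}, replacing the characteristic-zero inputs by the residual tools of Section~5. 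The unifying principle is an upgrade lemma: if the tools of Section~5 identify $\bar{r}_{\lambda}$ with the reduction mod $\lambda$ of a single fixed characteristic-zero object $r$ for infinitely many $\lambda$, then pigeonholing over the finitely many size-$k$ submultisets of the Satake parameters $\{a_1(v),\ldots,a_n(v)\}$ of $\pi$ at each unramified $v$ upgrades the mod-$\lambda$ divisibility of Frobenius characteristic polynomials to a genuine divisibility in characteristic zero, so that the compatible system of $r$ weakly divides $\cR$. Lemma~\ref{lem: 2 divides 5} then forces $(\pi,\chi)$ into the automorphic-induction or symmetric-power alternative, in either of which $\rbar_{\lambda}(\pi)$ is irreducible for all but finitely many $\lambda$, a contradiction.

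When $k = 1$, Lemma~\ref{lemma:characters} supplies the required algebraic Hecke character and the above scheme closes the case. When $k = 2$ and $\bar{r}_{\lambda}$ is dihedral for infinitely many $\lambda$, Corollary~\ref{cor: dihedral even finiteness} produces a fixed dihedral RAESDC automorphic representation whose compatible system weakly divides $\cR$; base change to the underlying CM field puts us in case~(2) of Lemma~\ref{lem: 2 divides 5}. The exceptional projective-image cases $A_4$, $S_4$, $A_5$ are excluded by Lemma~\ref{lem: only finitely many exceptional GL_2}. We are thus reduced to the case that $\bar{r}_{\lambda}$ is $2$-dimensional, absolutely irreducible, non-dihedral and of large image (containing $\SL_2$). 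For $n = 3$ the complement is $1$-dimensional and already handled; for $n = 5$, residual oddness (Lemma~\ref{lem: odd-dimensional implies odd} applied residually) makes $\rbar_{\lambda}(\pi)$ orthogonal and the dimension mismatch $2 \ne 3$ prevents the pairing from exchanging the summands, forcing the $2$-dimensional summand to be orthogonally self-dual and hence dihedral, a contradiction.

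The genuinely delicate case is $n = 4$ with a residual $2 + 2$ split. Residual essential self-duality of $\rbar_{\lambda}(\pi)$ either pairs each summand with itself (making both orthogonally self-dual, hence dihedral in the even-multiplier case, contradiction) or swaps them, yielding
\[
\rbar_{\lambda}(\pi) \cong \bar{s}_{\lambda} \oplus \bar{s}_{\lambda} \otimes \bar{\phi}_{\lambda}
\]
for a character $\bar{\phi}_{\lambda}$ of $G_F$ of bounded conductor and fixed Fontaine--Laffaille weight. Lemma~\ref{lemma:characters} lifts $\bar{\phi}_{\lambda}$ to a single algebraic Hecke character $\phi$ for infinitely many $\lambda$, and pigeonholing over the finitely many pairings of $\{1,2,3,4\}$ produces fixed indices $i \ne j$ and a density-one set of places $v$ with $\phi(\Frob_v) = a_j(v)/a_i(v)$. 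Purity of $r_{\lambda}(\pi)$ then forces $|\phi(\Frob_v)|_v = 1$ on these places, so $\phi$ has weight zero and is of finite order; base-changing to its fixed field gives $a_i(v) = a_j(v)$ at a density-one set of $v$, which by Proposition~\ref{prop: list of algebras} is incompatible with the strong irreducibility of $r_{\lambda}(\pi)$ guaranteed by Corollary~\ref{cor:induct}, as the generic element of each of the possible reductive groups (semisimple part $\slf_2$, $\slf_2 \times \slf_2$, $\spf_4$ or $\slf_4$) has distinct eigenvalues in its $4$-dimensional representation.

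The main obstacle will be the parallel symplectic-multiplier sub-subcase of the $2+2$ split, where, following Section~\ref{subsec:symplectic char 0}, we pass to the auxiliary $5$-dimensional orthogonal compatible system $\cA$ with $a_{\lambda} := \wedge^2 r_{\lambda}(\pi) - \chi_{\lambda}$. Residually, $\bar{a}_{\lambda}$ acquires a character summand, and running Lemma~\ref{lemma:characters} together with Proposition~\ref{prop: list of algebras} and Lemma~\ref{lem: 2 divides 5} on $\cA$ reduces us once again to the symmetric-power alternative, which closes the case. The most delicate bookkeeping is propagating residual essential self-duality and regularity through the wedge square and through the base-change step in Lemma~\ref{lem: 2 divides 5}; beyond this, the proof is a residual rerun of the characteristic-zero analysis of Section~4.
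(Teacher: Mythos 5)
Your overall architecture for the character, dihedral, $n=5$, and $n=4$ symplectic cases tracks the paper's proof closely (the paper packages the first two as Lemma~\ref{lemma:partitiontype}, feeding into Lemma~\ref{lem: 2 divides 5} and Corollary~\ref{cor: divisibility by a single character is impossible}, and then runs the $n=5$ and symplectic $n=4$ cases essentially as you describe). The problem is the $n=4$ orthogonal case, which is where almost all of the work in the paper actually lies, and where your argument has a genuine gap.

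The gap is in the claimed dichotomy ``residual essential self-duality either pairs each summand with itself (hence both orthogonal, hence dihedral) or swaps them (hence $\rbar_\lambda(\pi)\cong\sbar_\lambda\oplus\sbar_\lambda\otimes\bar\phi_\lambda$).'' For the \emph{semisimplification} of the reduction, the relation $\sbar\oplus\tbar\cong(\sbar\oplus\tbar)^\vee\otimes\bar\mu$ in the self-paired branch only gives $\sbar\cong\sbar\otimes(\det\sbar)^{-1}\bar\mu$; if $(\det\sbar)^{-1}\bar\mu$ is nontrivial this forces $\sbar$ dihedral, but if $\det\sbar=\det\tbar=\bar\mu$ both summands are merely \emph{symplectically} self-dual, no dihedrality follows, and the two summands need not be twists of one another --- so your lifted character $\phi$ and the subsequent purity argument never get off the ground. (You cannot argue via a non-degenerate symmetric pairing on the residual representation either, since $\rbar_\lambda(\pi)$ need not be semisimple and the reduced pairing need not restrict non-degenerately to the constituents.) The paper closes exactly this regime by going back to characteristic zero: Lemma~\ref{lem: GO4 implies Asai or tensor} writes $r_\lambda(\pi)\cong a_\lambda\otimes b_\lambda$ or $r_\lambda(\pi)|_{G_K}\cong a_\lambda\otimes a_\lambda^c$, and the resulting case analysis requires two inputs absent from your proposal: the Asai case (which you never mention), and, in the key sub-case where $\abar_\lambda$ is reducible while $\bbar_\lambda$ has large image, Lemma~\ref{lem: only finitely many reducible GL_2}, whose proof uses Ribet's lemma, Selmer group duality, the Iwasawa main conjecture for totally real fields, and finiteness of $K_2$. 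None of this is replaceable by the soft pigeonhole-and-lift scheme as you have set it up.

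That said, your purity idea for the genuine swap sub-case --- lift the twisting character via Lemma~\ref{lemma:characters}, pigeonhole Frobenius eigenvalue congruences over the infinitely many $\lambda$ to promote them to exact identities $a_j(v)=a_i(v)\phi(\Frob_v)$, and then use purity to force $\phi$ to have weight zero --- is a genuinely different route from the paper's and is worth examining carefully, since where it applies it would bypass the Iwasawa-theoretic input. But as written it is conditional on the flawed dichotomy, it omits the component-group and Zariski-density justification needed to turn ``repeated Frobenius eigenvalues on a positive density set'' into a contradiction with Proposition~\ref{prop: list of algebras}, and it does not cover the configurations ($\det\sbar=\det\tbar=\bar\mu$ non-twist, and the Asai/induced-from-$K$ structure) that the paper's $\GO_4$ analysis exists to handle.
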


We firstly establish the following corollary of Lemma~\ref{lem: 2
  divides 5} and Corollary \ref{cor: divisibility by a single character is impossible}.

\begin{lemma} \label{lemma:partitiontype} Let $F$ be a totally real field. Let $(\pi,\chi)$ be a RAESDC
  automorphic representation of $\GL_n(\A_F)$, $n\le 5$, and let
  $\cR=\{r_\lambda(\pi)\}$ be the associated weakly compatible
  system. Suppose that, for infinitely
  many primes $\lambda$, at least one of the following holds:
\begin{enumerate} 
\item $\rbar_{\lambda}(\pi)^{\sss}$ contains a character.
\item $\rbar_{\lambda}(\pi)^{\sss}$ contains a two dimensional 
dihedral representation.
\end{enumerate}
Then, respectively,  at least one of the following also holds:
\begin{enumerate}
\item $\cR$ is weakly divisible by a compatible system  of algebraic Hecke characters.
\item For some finite extension $E/F$, $\cR|_{G_E}$ is weakly divisibly by a
  direct sum of two compatible
systems of algebraic Hecke characters.
\end{enumerate}In particular, $\rbar_\lambda(\pi)$ is irreducible for
all but finitely many $\lambda$.
\end{lemma}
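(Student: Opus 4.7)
The plan is to use Lemma \ref{lemma:characters} (for case (1)) and Corollary \ref{cor: dihedral even finiteness}(2) (for case (2)) to produce fixed (compatible systems of) algebraic Hecke characters weakly dividing $\cR$, and then to deduce the final assertion from Lemma \ref{lem: 2 divides 5}. Throughout, both the tame conductor of $\rbar_\lambda(\pi)^{\sss}$ and its Fontaine--Laffaille weights (when $\ell$ is in the FL range) are uniformly controlled, so pigeonhole will be my basic tool for extracting fixed data out of an infinite subfamily.

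For case (1), first I would observe that any character $\bar\chi_\lambda \subseteq \rbar_\lambda(\pi)^{\sss}$ has tame conductor bounded independently of $\lambda$, and that for $\ell$ in the FL range its weight $\sigma_\lambda$ belongs to a finite set of possibilities determined by the Hodge--Tate weights of $r_\lambda(\pi)$. By pigeonhole I may fix the weight to some $\sigma$. Lemma \ref{lemma:characters} then yields a fixed algebraic Hecke character $\psi$ of weight $\sigma$ and finite-order characters $\phi_\lambda$ such that $\bar\chi_\lambda = \overline{\psi_\lambda}\phi_\lambda$; for $\ell$ sufficiently large the relevant ray class group controlling the $\phi_\lambda$ has no $\ell$-torsion, so the $\phi_\lambda$ range over a finite fixed set, and a further pigeonhole allows me to absorb a fixed $\phi$ into $\psi$ to obtain $\bar\chi_\lambda = \overline{\psi_\lambda}$ for infinitely many $\lambda$. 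To upgrade from mod-$\lambda$ congruence to true weak divisibility, at any unramified Frobenius $\Frob_v$ both the characteristic polynomial $P_v(X)$ of $r_\lambda(\pi)(\Frob_v)$ and the value $\psi(\Frob_v)$ lie in a fixed number field $M$; since $P_v(\psi(\Frob_v)) \equiv 0 \pmod{\lambda}$ for infinitely many $\lambda$, the element $P_v(\psi(\Frob_v)) \in M$ must vanish, and therefore $\{r_\lambda(\psi)\}$ weakly divides $\cR$.

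For case (2), I begin by noting that any two-dimensional dihedral summand $\bar s_\lambda \subseteq \rbar_\lambda(\pi)^{\sss}$ has distinct FL weights at each place above $\ell$ (inherited from the regularity of $r_\lambda(\pi)$) and tame level bounded by that of $\rbar_\lambda(\pi)$. Pigeonholing the weights and the level, Corollary \ref{cor: dihedral even finiteness}(2) supplies a fixed RAESDC automorphic representation $\pi_s$ of $\GL_2(\A_F)$, arising as the induction from a CM extension $F'/F$ of an algebraic Hecke character $\xi$, such that $\bar s_\lambda = \overline{r_\lambda(\pi_s)}$ for infinitely many $\lambda$. The same number-field upgrade as in case (1), applied now to characteristic polynomials of Frobenius, shows that $\{r_\lambda(\pi_s)\}$ weakly divides $\cR$. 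Restricting to $G_{F'}$ gives $r_\lambda(\pi_s)|_{G_{F'}} \cong r_\lambda(\xi) \oplus r_\lambda(\xi^c)$ with $\xi \neq \xi^c$ (otherwise $\bar s_\lambda$ would be reducible over $F$), exhibiting $\cR|_{G_{F'}}$ as weakly divisible by the direct sum of the two compatible systems of algebraic Hecke characters $\{r_\lambda(\xi)\}$ and $\{r_\lambda(\xi^c)\}$.

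Finally, in either case the hypothesis of Lemma \ref{lem: 2 divides 5} is satisfied, and the ``in particular'' conclusion that $r_\lambda(\pi)$ is irreducible for all $\lambda$ and $\rbar_\lambda(\pi)$ is irreducible for all but finitely many $\lambda$ is immediate. I expect the main obstacle to be the two pigeonhole reductions in case (1): pinning down the weight $\sigma$ is straightforward from the finiteness of the weight set, but controlling the finite-order twist $\phi_\lambda$ requires ensuring that the prime-to-$\ell$ part of the appropriate ray class group bounds its order, which mirrors the argument inside Lemma \ref{lemma:characters} itself. The analogous step in case (2) is already packaged into Corollary \ref{cor: dihedral even finiteness}(2), so case (2) reduces largely to bookkeeping once the $F'$-restriction is taken.
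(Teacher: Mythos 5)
Your proposal is correct and follows essentially the same route as the paper: pigeonhole on the finitely many possible Fontaine--Laffaille weights and tame levels, invoke Lemma~\ref{lemma:characters} (resp.\ Corollary~\ref{cor: dihedral even finiteness}) to pin down a fixed algebraic Hecke character (resp.\ a fixed dihedral automorphic representation, restricted to the CM field to yield two characters), and conclude via Lemma~\ref{lem: 2 divides 5}. The only difference is that you spell out the standard step of upgrading infinitely many mod-$\lambda$ congruences $P_v(\psi(\Frob_v))\equiv 0$ to an identity in the coefficient field, which the paper leaves implicit.
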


\begin{proof}
  Denote the corresponding  sub-representations of
  $\rbar_{\lambda}(\pi)^{\sss}$ by $\sbar_{\lambda}$. If the
  Hodge-Tate weights of $r_{\lambda}(\pi)$ are in the
  Fontaine--Laffaille range, then there are a fixed number of possible
  Fontaine-Laffaille weights of $\sbar_{\lambda}$, which are independent of
  $\lambda$. Similarly, there are finitely many possible Serre levels
  determined by the auxiliary ramification structure of $\cR$. 
  The result follows by 
  Lemma~\ref{lemma:characters} and Corollary~\ref{cor: dihedral even finiteness} 
  respectively (with the last sentence following from Lemma \ref{lem: 2 divides 5}).
\end{proof}

We now prove Theorem \ref{thm: all but finitely many irred
  over Q}. Assume for the sake of contradiction that there are
infinitely many $\lambda$ with $\rbar_\lambda(\pi)$ reducible. By
Lemma \ref{lemma:partitiontype}, there can only be finitely many
$\lambda$ such that $\rbar_\lambda(\pi)^\semis$ contains a character. This is
already a contradiction when $n\le 3$, and when $n=4$ or $n=5$ it
implies that there are infinitely many $\lambda$ for which
$\rbar_\lambda(\pi)^\semis\cong \sbar_\lambda\oplus \tbar_\lambda$ with
$\sbar_\lambda$ and $\tbar_\lambda$ both irreducible, $\dim
\sbar_\lambda=2$, and neither of $\sbar_\lambda$ and $\tbar_\lambda$
are dihedral.

\subsection{The case $n=5$} Since $5$ is odd, we see from Lemma
\ref{lem: odd-dimensional implies odd} that $r_\lambda(\pi)$ and thus
$\rbar_\lambda(\pi)$ factors
through $\GO_5$ with even multiplier. Since $2\ne 3$, we see that $\sbar_\lambda$
factors through $\GO_2$ with even multiplier, so $\sbar_\lambda$ is
dihedral. This can only happen for finitely many $\lambda$ by Lemma~\ref{lemma:partitiontype}.
\medskip

\subsection{The $n=4$ symplectic case} We argue as in
section~\ref{subsec:symplectic char 0}. Let $\cR$ be the compatible system of Galois representations associated to $(\pi,\chi)$,
and define $\cA:= \wedge^2(\cR) - \chi$, a compatible system of Galois representations such that
$a_{\lambda}:G_F \rightarrow \GL_5(\Qbar_l)$ has image in $\GO_5(\Qbar_l)$. Again, this
compatible system is odd and regular.

Since $\rbar_\lambda(\pi)=\sbar_\lambda\oplus \tbar_\lambda$, we have
$$\abar_{\lambda} \oplus \chibar_{\lambda} = \sbar_{\lambda} \otimes \tbar_{\lambda} \oplus 
\det(\sbar_{\lambda}) \oplus \det(\tbar_{\lambda}).$$ In particular,
as there are two characters on the right hand side, the representation
$\abar_\lambda$ contains a character for infinitely many $\lambda$,
and as in the proof of Lemma~\ref{lemma:partitiontype}, we deduce that
the compatible system $\cA$ is weakly divisible by the compatible system of a
character. 

Arguing as in section~\ref{subsec:symplectic char 0}, we deduce that
there is a finite Galois extension $F'/F$ of totally real fields and a
RAESDC automorphic representation $\pi'$ of $\GL_2(\A_{F'})$, which is
not of CM type, such that, for all $\lambda$, we have
$a_{\lambda}|_{G_{F'}}\cong\Sym^4r_{\lambda}(\pi')\otimes\det(r_{\lambda}(\pi'))^{-2}$. Then
for all but finitely many $\lambda$ the projective image of
$\rbar_{\lambda}(\pi')$ contains $\PSL_2(\F_l)$ and
$\abar_\lambda|_{G_{F'}}$ is irreducible, a contradiction.

\subsection{The $n=4$ orthogonal case}

It will be useful in the sequel to 
exploit the exceptional isomorphism of Lie groups $\sof_4 \simeq \slf_2 \times \slf_2$.
More precisely:

\begin{lem}
  \label{lem: GO4 implies Asai or tensor}Let $F$ be a number field. Suppose that
  $r:G_F\to\GO_4(\Qlbar)$ is a continuous representation. Then either:
  \begin{enumerate}
  \item there are continuous representations $a$,
    $b:G_F\to\GL_2(\Qlbar)$ with $r\cong a\otimes b$, or
  \item there is a quadratic extension $K/F$ and a continuous
    representation $a:G_K\to\GL_2(\Qlbar)$ with $r|_{G_K}\cong
    a\otimes a^c$, where $\Gal(K/F)=\{1,c\}$.
  \end{enumerate}

\end{lem}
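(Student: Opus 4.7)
The plan is to exploit the exceptional isogeny $\GL_2\times\GL_2\to\operatorname{GSO}_4$ given by tensor product, together with its natural extension to $\GO_4$. Concretely, identifying $V\otimes W$ with $\operatorname{Mat}_2$ (so that the natural $(\GL_2\times\GL_2)$-invariant quadratic form becomes $\det$), the map $(A,B)\mapsto A\otimes B$ becomes $M\mapsto AMB^t$, exhibiting a surjection $\GL_2\times\GL_2\twoheadrightarrow\operatorname{GSO}_4$ with multiplier $(\det A)(\det B)$ and kernel $\Gm$ embedded antidiagonally via $t\mapsto (tI,t^{-1}I)$. The nontrivial coset of $\GO_4/\operatorname{GSO}_4$ is represented by transpose $M\mapsto M^t$, whose conjugation action on $\operatorname{GSO}_4$ corresponds to the swap $(A,B)\mapsto(B,A)$. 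This assembles into a short exact sequence of topological groups
\[
1\to \Gm\to (\GL_2\times\GL_2)\rtimes\Z/2\to \GO_4\to 1.
\]

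Next, I would lift $r$ to a continuous homomorphism $\tilde r\colon G_F\to ((\GL_2\times\GL_2)\rtimes\Z/2)(\Qlbar)$. The obstruction lies in $H^2(G_F,\Qlbar^\times)$, which vanishes by the result of Tate already invoked in the proof of Lemma~\ref{lem: 2 divides 5}. Composing $\tilde r$ with the projection to $\Z/2$ yields a quadratic character $\eta\colon G_F\to\{\pm 1\}$. If $\eta$ is trivial, then $\tilde r$ factors through $\GL_2\times\GL_2$, giving continuous representations $a,b\colon G_F\to\GL_2(\Qlbar)$ with $r\cong a\otimes b$, placing us in case (1).

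Otherwise, let $K/F$ be the quadratic extension cut out by $\eta$, with $\Gal(K/F)=\{1,c\}$. Then $\tilde r|_{G_K}$ lands in $\GL_2\times\GL_2$, say $\tilde r|_{G_K}=(a,b)$, so that $r|_{G_K}\cong a\otimes b$. Lifting $c$ to $G_F$ and writing $\tilde r(c)=(X,Y)\cdot\sigma$ (with $\sigma$ the swap), the cocycle identity $\tilde r(c)\tilde r(g)\tilde r(c)^{-1}=\tilde r(cgc^{-1})$ for $g\in G_K$ unfolds in $\GL_2\times\GL_2$ to
\[
(a(cgc^{-1}),b(cgc^{-1})) = (Xb(g)X^{-1},Ya(g)Y^{-1}),
\]
so $b\cong a^c$ as $G_K$-representations (with intertwiner $X$). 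Hence $r|_{G_K}\cong a\otimes a^c$, placing us in case (2). The one substantive step is the existence of the continuous lift $\tilde r$, which rests entirely on Tate's vanishing theorem; given this, the case analysis is a direct unfolding of the semidirect product structure, and no further absorbing of characters is required since we lifted across the full extension by $\Gm$ rather than only within $\operatorname{GSO}_4$.
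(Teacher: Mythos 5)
Your strategy is the same as the paper's --- the exceptional isogeny from $\GL_2\times\GL_2$ onto $\mathrm{GSO}_4$ together with Tate's vanishing theorem --- but the one step you flag as substantive is exactly where there is a genuine gap. The kernel $\Gm$ of your surjection $(\GL_2(\Qlbar)\times\GL_2(\Qlbar))\rtimes\Z/2\to\GO_4(\Qlbar)$ is normal but \emph{not central}: as you yourself compute, the swap conjugates $(tI,t^{-1}I)$ to $(t^{-1}I,tI)$, i.e.\ acts on the antidiagonal $\Gm$ by inversion. For a non-central abelian kernel, the obstruction to lifting $r$ lies in $H^2(G_F,\Qlbar^\times(\eta))$, where the coefficients carry the action induced through the component character $\eta$ (inversion on the non-identity coset), not in $H^2(G_F,\Qlbar^\times)$ with the trivial action. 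Tate's theorem addresses only the latter, and the twisted group is not obviously zero: already $H^2(\Gal(K/F),\Qlbar^\times(-1))$ is the group $\{\pm 1\}$ modulo the image of the norm map $t\mapsto t\cdot t^{-1}=1$, hence is $\Z/2$, so inflation can contribute a nontrivial obstruction class. Thus the existence of $\tilde r$ over all of $G_F$ when $\eta\neq 1$ is unjustified, and with it the clean identity $a^c=XbX^{-1}$.

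The repair is the paper's route. The character $\eta$ requires no lifting at all: it is just $r$ composed with $\GO_4\to\{\pm 1\}$. If $\eta=1$ then $r$ lands in $\mathrm{GSO}_4$, and the extension $1\to\Gm\to\GL_2\times\GL_2\to\mathrm{GSO}_4\to 1$ \emph{is} central, so Tate applies over $G_F$ and yields case (1). If $\eta\neq 1$, one first restricts to $G_K=\ker\eta$ and lifts $r|_{G_K}$ across this same central extension to obtain $r|_{G_K}\cong a\otimes b$. The price of lifting only over $G_K$ is that the two lifts $(a^c,b^c)$ and $(XbX^{-1},YaY^{-1})$ of the same homomorphism into $\mathrm{GSO}_4$ need only agree up to the kernel, giving $a^c\cong b\otimes\mu$ for some continuous character $\mu$ of $G_K$ --- precisely the character-absorption you hoped to sidestep. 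That discrepancy must then be handled, either by twisting $a$ suitably or by noting that an identification of $r|_{G_K}$ with $a\otimes a^c$ up to twist suffices for the application later in the paper.
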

\begin{proof}
  We have an exact sequence
  \[0\to\Qlbar^\times\to\GL_2(\Qlbar)\times\GL_2(\Qlbar)\to\GO_4(\Qlbar)\to\{\pm
  1\}\to 0\] (cf. section 1 of~\cite{MR1874921}). Suppose firstly that
  the composite $r:G_F\to\GO_4(\Qlbar)\to\{\pm 1\}$ is not
  surjective. Then the obstruction to lifting $r$ to a homomorphism
  $G_F\to\GL_2(\Qlbar)\times\GL_2(\Qlbar)$ lies in
  $H^2(G_F,\Qlbar^\times)$, which vanishes by the proof of Theorem
  5.4 of~\cite{MR2461903}. If the  composite $r:G_F\to\GO_4(\Qlbar)\to\{\pm 1\}$ is 
  surjective, then we let $G_K$ be the kernel of this composite, and
  the result follows as in the previous case.
\end{proof}

By Lemma \ref{lem:
  GO4 implies Asai or tensor}, we may assume either that for infinitely
many $\lambda$ we have $r_\lambda\cong a_\lambda\otimes b_\lambda$ for
some $a_\lambda$, $b_\lambda:G_F \to\GL_2(\Qlbar)$, or that for
infinitely many $\lambda$ there is a quadratic extension $K=K_\lambda/F$
with $r|_{G_K}\cong a_\lambda\otimes a_\lambda^c$ for some
$a_\lambda:G_K\to\GL_2(\Qlbar)$.

Suppose that we are in the first case. 
If $\abar_{\lambda}$ and $\bbar_{\lambda}$ are both reducible, then the
semi-simplification of $\abar_{\lambda} \otimes \bbar_{\lambda}$ consists of four characters, contrary to assumption.
Hence, without loss of generality, we may assume that $\bbar_{\lambda}$ is irreducible infinitely often.
There is an isomorphism
$$\wedge^2 r_{\lambda}(\pi) \otimes (\det r_{\lambda}(\pi))^{-1} \simeq
\ad^0 a_{\lambda} \oplus \ad^0 b_{\lambda},$$ from which we see that
$\ad^0 a_\lambda$ and $\ad^0 b_\lambda$ are both de Rham, and are
crystalline for all but finitely many $\lambda$. They
are both regular (this can be seen from a consideration of the
Hodge-Sen-Tate weights of $a_\lambda$ and $b_\lambda$).

\medskip

 After passing to a subset, we may assume that the
 Hodge-Tate weights of $\ad^0(a_{\lambda})$ and $\ad^0(b_{\lambda})$
 are independent of $\lambda$. Assume firstly that $\abar_\lambda$ and
 $\bbar_\lambda$ are irreducible for infinitely many $\lambda$.

 By Lemma \ref{lem: only finitely many exceptional GL_2} and the
 classification of finite subgroups of $\PGL_2(\Flbar)$ (cf. Theorem
 2.47 of \cite{MR1605752}), we may assume that $\abar_\lambda$ and
 $\bbar_\lambda$ are either dihedral or have images containing
 $\SL_2(\Fl)$.

 If both $\abar_\lambda$ and $\bbar_\lambda$ have image containing
 $\SL_2(\Fl)$, then $\abar_\lambda\otimes\bbar_\lambda$ is either
 irreducible or breaks up as a sum of a character and a 3-dimensional
 representation, a contradiction. If without loss of generality
 $\abar_\lambda$ is dihedral and $\bbar_\lambda$ has image containing
 $\SL_2(\Fl)$, let $K/F$ be the quadratic extension from which
 $\abar_\lambda$ is induced. Then $\bbar_\lambda|_{G_K}$ is
 irreducible, so $\abar_\lambda\otimes\bbar_\lambda$ is irreducible, a
 contradiction. The remaining case is that $\abar_\lambda$ and
 $\bbar_\lambda$ are both dihedral. Then $\rbar_\lambda(\pi)$ is
 completely decomposable over some quartic extension, which implies
 that $\sbar_\lambda$ and $\tbar_\lambda$ are both dihedral, a contradiction.

 We may thus assume that for infinitely many $\lambda$,
 $\abar_\lambda$ is reducible and $\bbar_\lambda$ is irreducible. If
 $\bbar_\lambda$ is dihedral then one of $\sbar_\lambda$ and
 $\tbar_\lambda$ is dihedral, which can only occur finitely often, so by Lemma \ref{lem: only finitely many
   exceptional GL_2} (applied to $\ad^0 b_\lambda$) we may assume that
 the image of $\bbar_\lambda$ contains $\SL_2(\Fl)$. Then for
 $\lambda$ sufficiently large $\ad^0\bbar_\lambda$ is irreducible, so
 as in the proof of Proposition \ref{prop: 2d representations are odd}
 we see that $\ad^0b_\lambda$ is potentially automorphic and
 $b_\lambda$ is odd. Since the multiplier character of
 $r_\lambda(\pi)=a_\lambda\otimes b_\lambda$ is even, we see that $\det
 a_\lambda(c_v)$ is independent of $v|\infty$. Then Lemma \ref{lem: only finitely many reducible GL_2} implies that there are only
 finitely many $\lambda$ for which such an $a_\lambda$ can exist.

This contradiction means that we may assume that we are in the second
case, so that for infinitely many $\lambda$, there is a quadratic extension $K/F$ (which might depend on
$\lambda$) and a continuous representation
$a_\lambda:G_K\to\GL_2(\Qlbar)$ such that $r_\lambda(\pi)|_{G_K}\cong
a_\lambda\otimes a_\lambda^c$.

We claim that $\abar_\lambda$ is necessarily dihedral for all but
finitely many of the $\lambda$ under consideration. We prove this by
eliminating the other possibilities. Firstly, $\abar_\lambda$ cannot
be reducible, because if $\abar_\lambda\cong\phibar\oplus \chibar$,
then $$\abar_{\lambda} \otimes \abar^c_{\lambda}\cong\phibar \otimes
\phibar^c \oplus \chibar \otimes \chibar^c \oplus \phibar \otimes
\chibar^c \oplus \chibar \otimes \phibar^c,$$ and the first two
characters descend to $\Q$, so that $\rbar_\lambda(\pi)$ would have
one-dimensional subrepresentations.

If $\abar_\lambda$ has projective image $A_4$, $S_4$ or $A_5$, then the
projective image of $\rbar_\lambda(\pi)$ is bounded independently of
$\lambda$. By Lemma \ref{lem: only finitely many exceptional
  GL_2}, this can only happen for finitely many $\lambda$.

The image of $\abar_{\lambda}$ cannot contain $\SL_2(\F_{\ell})$. If
it did, then $\abar_{\lambda} \otimes \abar^{c}_{\lambda}$ would
either be irreducible or a sum of an irreducible three-dimensional
representation and a character, depending on whether the projective
representation associated to $\abar_{\lambda}$ extends to $F$ or
not. This again contradicts the assumption that $\rbar_\lambda(\pi)$ is a
sum of two irreducible 2-dimensional representations.

Having eliminated the other possibilities, we see that for infinitely
many $\lambda$, $\abar_\lambda$ is dihedral. Then for infinitely many $\lambda$, $\sbar_\lambda$ and
$\tbar_\lambda$ each become reducible over quartic extensions of $F$,
and are thus dihedral. This contradiction completes the proof.

\section{Lie algebras}In this section we prove that the Lie algebras
of the images of the $r_\lambda(\pi)$ are independent of
$\lambda$. More specifically, we prove the following theorem.
\begin{thm}
  \label{thm: Lie algebras are independent of l}Let $F$ be a totally
  real field. Suppose that
  $(\pi,\chi)$ is a RAESDC automorphic representation of $\GL_n(\A_F)$
  with $n\le 5$. Let $G_\lambda$ denote the Zariski closure of the
  image $r_\lambda(\pi)(G_F)$, and let $\fg_\lambda=\gz_\lambda\oplus\gh_\lambda$ denote the Lie
  algebra of $G_\lambda$, where $\gz_\lambda$ is abelian and
  $\gh_\lambda$ is semisimple. Then $\fg_\lambda$ is independent of
  $\lambda$, and $\gh_\lambda$ is either $\sl_2$, $\so_4$ or $\sp_4$ (if
  $n=4$), or $\so_5=\sp_4$ (if $n=5$).
\end{thm}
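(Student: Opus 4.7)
The plan is to combine Theorem~\ref{thm: all irred char 0} with the classification in Proposition~\ref{prop: list of algebras}, and then argue that the surviving possibilities are separated by invariants of the compatible system. By Theorem~\ref{thm: all irred char 0} each $r_\lambda(\pi)$ is irreducible, and Corollary~\ref{cor:induct} splits the problem into two cases: either $r_\lambda(\pi)$ is strongly irreducible for every $\lambda$, or $(\pi,\chi)$ is an automorphic induction. The latter only occurs for $n=2$ (CM) or $n=4$ (character induction from a CM quartic extension, or induction of a Hilbert modular form from a real quadratic extension); in each such case $\fg_\lambda$ can be read off from the inducing representation---for non-CM Hilbert modular inductions Ribet--Momose give $\gh_\lambda = \so_4 = \sl_2 \oplus \sl_2$ independently of $\lambda$, while in the character-induction cases the image is abelian, so $\gh_\lambda = 0$.

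In the strongly irreducible case, $G_\lambda^0$ acts irreducibly on $V_\lambda$, and Proposition~\ref{prop: list of algebras} confines $\gh_\lambda$ to a short list, further pruned by whether $r_\lambda(\pi)$ preserves an orthogonal or a symplectic form (a property fixed by the sign in the essential self-duality of $(\pi,\chi)$, hence intrinsic to the compatible system). In most cases the list has a single entry; the nontrivial ambiguities are $\gh_\lambda \in \{\sl_2 \text{ via } \Sym^3,\ \sp_4\}$ when $n=4$ and the pairing is symplectic, and $\gh_\lambda \in \{\sl_2 \text{ via } \Sym^4,\ \so_5\}$ when $n=5$.

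To settle these ambiguities and establish independence of $\lambda$, I would use the invariant
\[d_\lambda := \dim \End_{G_\lambda}(V_\lambda \otimes V_\lambda^\vee),\]
which equals $\sum m_i^2$, where the $m_i$ are the multiplicities of the irreducible constituents of $V_\lambda \otimes V_\lambda^\vee$. Since $r_\lambda(\pi)$ is semisimple, $G_\lambda$ is reductive, and $d_\lambda$ is a polynomial function of the characteristic polynomials of elements of a Zariski-dense subset of $G_\lambda$. By Chebotarev density, this data is determined by the characteristic polynomials of $r_\lambda(\pi)(\Frob_v)$ at unramified $v$, which agree across $\lambda$ in a compatible system. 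Hence $d_\lambda$ is independent of $\lambda$. A direct Clebsch--Gordan computation yields $d_\lambda = 3$ for $\sp_4$ and for $\so_5$, $d_\lambda = 4$ for $\sl_2$ via $\Sym^3$, and $d_\lambda = 5$ for $\sl_2$ via $\Sym^4$, so the two competing possibilities in each ambiguous case are distinguished. Finally, $\gz_\lambda$ is handled by Schur: the connected center of $G_\lambda$ acts on $V_\lambda$ by scalars, so $\dim \gz_\lambda \le 1$, with equality iff $\det r_\lambda(\pi)$ has infinite image, a property visibly independent of $\lambda$ since $\det r_\lambda(\pi)$ is a power of the cyclotomic character times $r_\lambda(\chi)$ up to a finite-order twist.

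The main obstacle will be making rigorous the passage from Frobenius characteristic polynomials to the Lie algebra of $G_\lambda$, i.e.\ verifying that $d_\lambda$ is genuinely an invariant of the compatible system rather than merely of the abstract group $G_\lambda$. This relies on Chebotarev density together with the reductivity of $G_\lambda$, both of which are standard in this setting. The case-by-case representation-theoretic computations that underlie the distinguishing values of $d_\lambda$ are then routine.
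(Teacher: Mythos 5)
Your skeleton (irreducibility of all $r_\lambda(\pi)$, Corollary~\ref{cor:induct}, the table in Proposition~\ref{prop: list of algebras}, Schur's lemma for $\gz_\lambda$, and the observation that the orthogonal/symplectic dichotomy for $n=4$ is decided by the parity of $\chi$) matches the paper's, and your Clebsch--Gordan computations of $\dim\End(V\otimes V^\vee)$ for the competing groups are correct. The gap is in the one step carrying all the weight: the claim that $d_\lambda=\dim \End_{G_\lambda}(V_\lambda\otimes V_\lambda^\vee)$ is independent of $\lambda$ because it is ``a polynomial function of the characteristic polynomials'' of a dense set of elements. It is not. From the characteristic polynomials of $r_\lambda(\pi)(\Frob_v)$ one recovers, via Chebotarev and Brauer--Nesbitt, the trace of every element of $G_\lambda$ on any tensor construction $W_\lambda$; but extracting $\dim W_\lambda^{G_\lambda}$ (the multiplicity of the trivial constituent of the semisimple representation $W_\lambda$) from this character function requires knowing the group $G_\lambda$ --- which is exactly what is in question. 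There is no formula for $\dim W^G$ as a polynomial in finitely many characteristic polynomials; one needs the Weyl integration formula or the root datum. And the multiplicity of the trivial representation in a compatible system is not a priori $\lambda$-independent: an irreducible constituent can have $1$ as a Frobenius eigenvalue everywhere without containing the trivial representation ($\Sym^{n-1}$ of $\PSL_2$ for $n$ odd, the very phenomenon isolated in Theorem~\ref{thm: punt}), so matching characteristic polynomials do not pin down $\dim H^0(G_F,W_\lambda)$. Asserting that they do is essentially assuming the kind of $\lambda$-independence of decomposition data that the paper is devoted to proving.

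There are two ways to repair this. The paper's way is to replace $d_\lambda$ by a condition that manifestly depends only on Frobenius characteristic polynomials, namely weak divisibility of $\cR$ (or of $\wedge^2\cR-\chi$ in the $n=4$ symplectic case) by a compatible system of algebraic Hecke characters; Lemma~\ref{lem: 2 divides 5} and Theorem~\ref{thm: punt} show this holds for some $\lambda$ iff $\gh_\lambda=\sl_2$ acting through $\Sym^{n-1}$, iff it holds for all $\lambda$. Alternatively one could invoke Serre's Frobenius-torus results on the $\lambda$-independence of the rank (or formal character) of $G_\lambda^0$, which does separate $\sl_2$ (rank $1$) from $\sp_4$ and $\so_5$ (rank $2$); but that is a substantial external input that must be cited and verified, not a routine Chebotarev argument. (A minor further point: in the case induced from a Hilbert modular form over a real quadratic field, your blanket assertion $\gh_\lambda=\so_4$ need not hold --- if $\varpi^c$ is a twist of $\varpi$ one gets $\sl_2$ acting reducibly --- though either value is consistent with the statement and $\lambda$-independence persists there.)
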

\begin{proof}
  The result is trivial if $n=1$ and standard if $n=2$, so we may
  assume that $n\ge 3$. If $(\pi,\chi)$ is the automorphic induction
  of a character, then certainly $\fg_\lambda$ is abelian and
  independent of $\lambda$. If $n=4$ and $(\pi,\chi)$ is the
  automorphic induction of a RAESDC automorphic representation of
  $\GL_2(\A_{F'})$ for $F'/F$ a quadratic totally real extension, then
  either this representation is of CM type, and $(\pi,\chi)$ is the
  automorphic induction of a character, or $\fg_\lambda$ is
  independent of $\lambda$ and $\gh_\lambda$ is equal to $\sl_2$
  (acting reducibly).

  Excluding these cases, by Corollary \ref{cor:induct} (and its proof)
  we may assume that $(\pi,\chi)$ is not an automorphic induction, and
  that $r_\lambda(\pi)$ is strongly irreducible. In general the
  independence of $\gz_\lambda$ of $\lambda$ is an easy consequence of
  Schur's lemma. Therefore we need only determine
  $\gh_\lambda$. Suppose firstly that the compatible system
  $\{r_\lambda(\pi)\}$ is weakly divisible by a compatible system of
  algebraic Hecke characters. Then by Lemma \ref{lem: 2 divides 5} we
  see that $n=3$ or $5$, and that $\gh_\lambda=\sl_2$, acting through
  the $(n-1)$st symmetric power representation, independently of
  $\lambda$.

  Conversely, if $n=3$ or $5$ and for some $\lambda$ we have
  $\gh_\lambda=\sl_2$ acting through the $(n-1)$st symmetric power
  representation, then we claim that the compatible system
  $\{r_\lambda(\pi)\}$ is weakly divisible by a compatible system of
  algebraic Hecke characters. To see this, write $G$ for the Zariski
  closure of $r_\lambda(\pi)(G_F)$, and $G^0$ for the connected
  component of the identity. Then the derived subgroup of $G^0$ must
  be $\PSL_2$, and since $\PSL_2$ has no outer automorphisms, Schur's
  lemma shows that $G$ is necessarily of the form
  $Z(G)\times\PSL_2$. Since $Z(G)$ acts via a character (again by
  Schur's lemma), we see that the compatible system
  $\{r_\lambda(\pi)\}$ is weakly divisible by a compatible system of
  algebraic Hecke characters, as required.

Examining the table in Proposition \ref{prop: list of algebras}, we
see that we are done unless $n=4$. In this case if $\chi$ is odd then
each $r_\lambda(\pi)$ has even multiplier and is thus orthogonal, and
we see from the same table that $\gh_\lambda=\so_4$ for all
$\lambda$. If $\chi$ is even then for each $\lambda$ either
$\gh_\lambda=\sl_2$ (acting via $\Sym^3$) or $\gh_\lambda=\sp_4$. We
distinguish between these two possibilities by arguing as in Section
\ref{subsec:symplectic char 0}. Consider the compatible system $\cA:=
\wedge^2(\cR) - \chi$. This is a compatible system of odd, regular
Galois representations such that $a_{\lambda}:G_F \rightarrow
\GL_5(\Qbar_l)$ has image in $\GO_5(\Qbar_l)$. If for some $\lambda$
we have $\gh_\lambda=\sl_2$ then as above the compatible system $\cA$
is weakly divisible by a compatible system of algebraic characters,
and the argument of the proof of Lemma \ref{lem: 2 divides 5} shows
that $\gh_\lambda=\sl_2$ for all $\lambda$, as required.
\end{proof}

\section{Non self-dual representations of \texorpdfstring{$\GL_3$}{GL(3)} and 
\texorpdfstring{$\GL_4$}{GL(4)}}
In this
section, we follow \cite{ramakrishnan2009irreducibility} and sketch a
proof that our earlier irreducibility results extend to the case of
regular algebraic cuspidal automorphic representations $\pi$ of
$\GL_3(\A_F)$ or $\GL_4(\A_F)$, $F$ a totally real field, without assuming that $\pi$ is
essentially self-dual, but with the assumption that the Galois
representations $r_\lambda(\pi)$ exist.

Assume throughout this section that $F$ is totally real, that $n=3$ or $4$ and that there is a weakly compatible
system $\{r_\lambda(\pi)\}$ of Galois representations associated to
$\pi$. We will demonstrate the required irreducibility results by
reducing to the essentially self-dual case.

Note firstly that the proof of Corollary \ref{cor:induct} made no use
of the essential self-duality of $\pi$, so we have the following.

\begin{lemma}\label{lem: not strongly irred}  Let
  $\lambda$ be a prime such that $r_{\lambda}(\pi)$ is irreducible.
Then either:
\begin{enumerate} 
\item $r_\lambda(\pi)$ is strongly irreducible, or
\item $\pi$ is an automorphic induction,  $r_{\lambda}(\pi)$ is irreducible for all $\lambda$, and
$\rbar_{\lambda}(\pi)$ is irreducible for all but finitely many $\lambda$.
\end{enumerate}
\end{lemma}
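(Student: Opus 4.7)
The plan is to reproduce the argument of Corollary~\ref{cor:induct} almost verbatim; the crucial observation, already flagged in the paragraph preceding the lemma, is that that proof never invoked the essential self-duality of $(\pi,\chi)$. All it used were: regularity of $r_\lambda(\pi)$ (i.e.\ distinct $\tau$-Hodge--Tate weights at each prime above $l$), the fact that de Rham characters of $G_F$ come from algebraic Hecke characters, multiplicity one for $\GL_n$ with $n\le 4$, and the characterization of cyclic automorphic inductions via Theorem~4.2 of~\cite{MR1007299}. The hypothesis of this section is precisely that the compatible system $\{r_\lambda(\pi)\}$ exists with the usual properties, so all of these ingredients remain at our disposal.

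First, I would note that for any finite extension $E/F$ the restriction $r_\lambda(\pi)|_{G_E}$ still has distinct Hodge--Tate weights at each place above $l$, and hence if it is reducible its Jordan--H\"older constituents are pairwise non-isomorphic. Suppose $r_\lambda(\pi)$ is not strongly irreducible, and pick a Galois extension $E/F$ (replacing $E$ by its normal closure if necessary) such that $r_\lambda(\pi)|_{G_E}$ decomposes non-trivially into distinct irreducibles. Lemma~\ref{lem:inductions} then exhibits $r_\lambda(\pi)\cong\Ind_{G_H}^{G_F}\sigma$ with $\sigma$ irreducible and $[H:F]$ a divisor of $n$ greater than $1$.

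For $n=3$, the only option is $[H:F]=3$ with $\sigma$ a character; the de Rham property forces $\sigma$ to be an algebraic Hecke character, and since $H/F$ has odd degree $H$ contains no CM subfield, so by Weil's classification $\sigma$ is a finite-order character times a power of the cyclotomic character. This contradicts regularity of $r_\lambda(\pi)$, so case~(1) holds. For $n=4$, the options are $[H:F]=4$ with $\sigma$ a character, or $[H:F]=2$ with $\sigma$ two-dimensional. In the former case regularity of $r_\lambda(\pi)$ forces $H$ to contain a CM subfield, and hence a quadratic subfield $K/F$, so both cases reduce to $r_\lambda(\pi)\cong\Ind_{G_K}^{G_F}\tau$ for a two-dimensional irreducible $\tau$ over some quadratic $K/F$. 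This yields $r_\lambda(\pi)\otimes\eta_{K/F}\cong r_\lambda(\pi)$, so by strong multiplicity one for $\GL_4(\A_F)$ together with Theorem~4.2 of~\cite{MR1007299} we conclude that $\pi$ is the automorphic induction of a cuspidal automorphic representation $\varpi$ of $\GL_2(\A_K)$.

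It remains to upgrade this to irreducibility for all $\lambda$ and residual irreducibility for all but finitely many $\lambda$. If $K$ is not totally real then, exactly as in Corollary~\ref{cor:induct}, an infinitesimal character calculation shows $\pi$ cannot be regular, contradicting our hypothesis. So $K$ is totally real and $\varpi$ arises from a Hilbert modular form with Galois representations $s_\lambda(\varpi)$, which are irreducible for every $\lambda$ and residually irreducible for all but finitely many $\lambda$ by Ribet--Taylor. The compatible systems satisfy $r_\lambda(\pi)\cong\Ind_{G_K}^{G_F}s_\lambda(\varpi)$; if $r_\lambda(\pi)$ were reducible (resp.\ $\rbar_\lambda(\pi)$ reducible for infinitely many $\lambda$) then $s_\lambda(\varpi)\cong s_\lambda(\varpi)^c$ (resp.\ the analogous residual statement infinitely often), and multiplicity one together with Theorem~4.2 of~\cite{MR1007299} would force $\varpi$ to descend from $\GL_2(\A_F)$, contradicting cuspidality of $\pi$. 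The only real ``obstacle'' is bookkeeping --- checking that each invocation of essential self-duality in Corollary~\ref{cor:induct} can be deleted without affecting the argument --- and this is immediate from inspection.
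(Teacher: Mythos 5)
Your proposal is correct and is exactly the paper's argument: the paper proves this lemma by observing that the proof of Corollary~\ref{cor:induct} nowhere uses essential self-duality, and you have simply spelled out that verification (restricted to $n=3,4$, as appropriate for this section).
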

The following Lemma and Corollary will be our main tool to reduce to
the essentially self-dual case.

\begin{lemma}\label{lem: wedge squared implies self dual representation} Suppose that $r:G_{F} \rightarrow \GL_4(\Qbar_l)$ is
  strongly irreducible, and suppose that $\wedge^2 r:G_{F} \rightarrow
  \GL_6(\Qbar_l)$ is not strongly irreducible. Then $r \simeq r^{\vee}
  \chi$ for some character $\chi$.
\end{lemma}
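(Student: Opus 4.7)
The strategy is to analyze the Zariski closure $G$ of $r(G_F)$ in $\GL(V)$, where $V = \Qlbar^4$, together with its identity component $G^0$. For any finite extension $E/F$, the Zariski closure of $r(G_E)$ has finite index in $G$ and hence has the same identity component $G^0$; so strong irreducibility of $r$ immediately implies that $G^0$ acts irreducibly on $V$. Since $G^0$ is connected and reductive, and cannot be a torus (a torus has no faithful irreducible representation in dimension $>1$), its Lie algebra has a nontrivial semisimple part $\gh$, so Proposition \ref{prop: list of algebras} applies.

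Conversely, if $G^0$ were to act irreducibly on $\wedge^2 V$, then the same argument would show that $\wedge^2 r|_{G_E}$ is irreducible for every finite $E/F$, contradicting the hypothesis that $\wedge^2 r$ is not strongly irreducible. Hence $G^0$ acts reducibly on $\wedge^2 V$. Consulting the $n=4$ row of Proposition \ref{prop: list of algebras}, $\gh$ is one of $\slf_2$ (via $\Sym^3$), $\slf_2 \times \slf_2$, $\spf_4$, $\sof_4$, or $\slf_4$. A direct check shows that in the $\slf_4$ case, $\wedge^2 V$ is the 6-dimensional fundamental representation and is irreducible, so this case is ruled out. In each of the other four cases, $\wedge^2 V$ is reducible as a $\gh$-module: for example, $\wedge^2(V_1 \otimes V_2) \cong \Sym^2 V_1 \oplus \Sym^2 V_2$ when $\slf_2 \times \slf_2$ acts on $V_1 \otimes V_2$ (using that $\wedge^2 V_i$ is trivial for $\slf_2$), while for the symplectic or $\Sym^3$ embeddings, $\wedge^2 V$ contains the trivial representation as a direct summand given by the invariant form. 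Since all surviving possibilities lie in the $\GO$ or $\GSp$ column of the table, $G^0$ preserves a nondegenerate bilinear form $B$ on $V$ up to a similitude character $\chi_0 : G^0 \to \Gm$.

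Finally, I would promote the $G^0$-invariance of $B$ to a $G$-equivariance. By Schur's lemma applied to the irreducible $G^0$-module $V$, the line $L \subset V^\vee \otimes V^\vee$ spanned by $B$ is the unique 1-dimensional $G^0$-subrepresentation of $V^\vee \otimes V^\vee$ (it is precisely the $\chi_0$-isotypic component). Since $G$ normalizes $G^0$, for any $g \in G$ the translate $g \cdot L$ is again a 1-dimensional $G^0$-subrepresentation and hence equals $L$. So $L$ is $G$-stable, and the action of $G$ on $L$ gives a character $\chi : G \to \Gm$ extending $\chi_0$. Pulling back along $r$ yields $r \cong r^\vee \otimes \chi$, as required. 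The main technical burden is the case-by-case verification of the $\wedge^2$ decompositions; the rest is a routine application of Schur's lemma together with the classification in Proposition \ref{prop: list of algebras}.
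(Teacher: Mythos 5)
Your proof is correct and follows essentially the same route as the paper's: pass to the Zariski closure and its identity component, use Proposition \ref{prop: list of algebras} to see that once $\slf_4$ is excluded (because $\wedge^2$ of the standard representation of $\slf_4$ is irreducible) the remaining Lie algebras preserve a pairing, and then extend the $G^0$-invariant form to $G$. Your Schur's-lemma uniqueness-of-the-line argument is just a spelled-out version of the paper's appeal to the normalizers of $\Sp_4$ and $\SO_4$ in $\GL_4$ being $\GSp_4$ and $\GO_4$.
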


\begin{proof} This is a standard argument (cf. Theorem 6.5 of
  \cite{0712.4315}). Consider the Zariski closure $G$ of the image of
  $r$. Let $G^0$ denote the connected component of $G$, let $\gog$ be
  the Lie algebra of $G^0$, and write $\gog = \gz \oplus \gh$, with
  $\gz$ is abelian and $\gh$ semisimple.  By assumption, $G^0$ acts
  irreducibly in dimension $4$.  If $\wedge^2 r$ is not strongly
  irreducible, then $\gh = \slf_2$, $\sof_4$, or $\spf_4$. It follows
  that $G^0$ preserves a symplectic or orthogonal form, from which it
  is easy to deduce (using, for example, the facts that the normalizers of $\Sp_4$
  and $\SO_4$ in $\GL_4$ are respectively $\GSp_4$ and $\GO_4$) that
  the image of $r$ is symplectic or orthogonal, as required.
\end{proof}
\begin{cor}\label{cor: wedge squared irreducible}
  Suppose that $n=4$, that $\pi$ is not essentially self-dual, and that for some
  $\lambda$, $r_\lambda(\pi)$ is strongly irreducible. Then $\wedge^2
  r_\lambda(\pi)$ is strongly irreducible.
\end{cor}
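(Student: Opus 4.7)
The plan is to argue by contradiction and reduce to the preceding lemma. Assume that $\wedge^2 r_\lambda(\pi)$ fails to be strongly irreducible. Since $r_\lambda(\pi)$ is strongly irreducible by hypothesis, Lemma~\ref{lem: wedge squared implies self dual representation} applies and yields a character $\chi:G_F\to\Qlbar^\times$ together with an isomorphism
\[ r_\lambda(\pi)\;\simeq\; r_\lambda(\pi)^\vee\otimes\chi. \]
The strategy is then to promote this Galois-theoretic essential self-duality to an essential self-duality of $\pi$ itself, contradicting the hypothesis that $\pi$ is not essentially self-dual.

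First I would check that $\chi$ comes from an algebraic Hecke character. Indeed, since $r_\lambda(\pi)$ is irreducible, Schur's lemma forces $\chi$ to be uniquely determined by the above isomorphism; comparing determinants gives $\chi^4 = (\det r_\lambda(\pi))^2 / \det r_\lambda(\pi)^{\otimes(-1)}$, so $\chi$ is a fourth root (up to the known $\det r_\lambda(\pi)$) of an explicit de Rham character and is therefore itself de Rham. By e.g. Lemma~4.1.3 of~\cite{cht}, $\chi = r_\lambda(\psi)$ for some algebraic Hecke character $\psi$ of $\A_F^\times/F^\times$.

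Next I would transfer the isomorphism across to the automorphic side. Taking traces of Frobenius at an unramified prime $v$ of $F$ (not lying above $l$ and outside the ramification of $\pi$ and $\psi$), the relation $r_\lambda(\pi)\simeq r_\lambda(\pi)^\vee\otimes r_\lambda(\psi)$ together with local-global compatibility (built into the compatible system $\{r_\lambda(\pi)\}$) shows that the Satake parameters of $\pi_v$ and of $(\pi^\vee\otimes\psi)_v$ coincide. Since this holds for a set of $v$ of density one, strong multiplicity one for $\GL_4$ (\cite{MR623137}) forces
\[ \pi \;\simeq\; \pi^\vee\otimes\psi, \]
which contradicts the assumption that $\pi$ is not essentially self-dual.

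The only subtle point is arranging the transfer from the Galois side to the automorphic side cleanly: we need local-global compatibility to hold at sufficiently many places (at least unramified ones, which is included in the definition of the compatible system $\{r_\lambda(\pi)\}$ as recalled in the Preliminaries), and we need $\psi$ to be genuinely algebraic so that $\pi^\vee\otimes\psi$ remains a regular algebraic cuspidal automorphic representation to which strong multiplicity one applies. Both are routine given the setup, so the corollary follows.
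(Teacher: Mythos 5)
Your argument is exactly the paper's proof: assume $\wedge^2 r_\lambda(\pi)$ is not strongly irreducible, invoke Lemma~\ref{lem: wedge squared implies self dual representation} to get $r_\lambda(\pi)\simeq r_\lambda(\pi)^\vee\otimes\chi$, and then use strong multiplicity one for $\GL_4$ to contradict the non-self-duality of $\pi$; you simply spell out the intermediate steps (algebraicity of $\chi$, Chebotarev, local-global compatibility at unramified places) that the paper leaves implicit. The only blemish is the determinant identity, which should read $\chi^4=(\det r_\lambda(\pi))^2$, but this does not affect the argument.
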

\begin{proof}Suppose that $\wedge^2 r_\lambda(\pi)$ is not strongly
  irreducible. By Lemma \ref{lem: wedge squared implies self dual
    representation}, we see that $r_\lambda(\pi)\simeq
  r_\lambda(\pi)^\vee\chi_\lambda$ for some character $\chi_\lambda$;
  but then strong multiplicity one for $\GL_4$ implies that $\pi$ is
  essentially self-dual, a contradiction.
\end{proof}
\begin{lem}
  \label{lem: 3+1 doesn't happen}If $n=4$, then it is impossible for
  $r_\lambda(\pi)$ to have a 1-dimensional summand.
\end{lem}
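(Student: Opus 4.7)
The plan is to adapt the strategy of Lemma \ref{lem: 2 divides 5} from the essentially self-dual case: combine weak divisibility of the compatible system by a Hecke character with the group-theoretic input of Theorem \ref{thm: punt}, exploiting the parity of $n=4$. Suppose for contradiction that $r_\lambda(\pi) \cong \chi \oplus s_3$ with $\chi$ a one-dimensional summand. Since $r_\lambda(\pi)$ is de Rham, so is $\chi$; by Lemma 4.1.3 of \cite{cht}, there is a unique algebraic Hecke character $\psi: \A_F^\times/F^\times \to \C^\times$ with $\chi = r_\lambda(\psi)$. Compatibility of the families $\{r_{\lambda'}(\pi)\}$ and $\{r_{\lambda'}(\psi)\}$ then implies that $\psi(\Frob_v)$ is a root of the characteristic polynomial of $r_{\lambda'}(\pi)(\Frob_v)$ at every unramified $v$ and every $\lambda'$, so $\{r_{\lambda'}(\pi)\}$ is weakly divisible by $\{r_{\lambda'}(\psi)\}$.

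Next I will locate a prime $\lambda'$ at which $r_{\lambda'}(\pi)$ is strongly irreducible. Since $r_\lambda(\pi)$ is reducible, $\pi$ cannot be an automorphic induction: the proof of Corollary \ref{cor:induct} shows that, for $\pi$ cuspidal on $\GL_4$, such inductions yield irreducible $r_{\lambda''}(\pi)$ at every prime. Combined with Lemma \ref{lem: not strongly irred}, this ensures that any $\lambda'$ at which $r_{\lambda'}(\pi)$ is irreducible has it automatically strongly irreducible. The existence of such a $\lambda'$ is provided by Ramakrishnan's irreducibility theorem for sufficiently large residue characteristic (\cite{ramakrishnan2009irreducibility}, applicable in the non-essentially-self-dual setting under our standing hypothesis on existence of the Galois representations).

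With such a $\lambda'$ fixed, form $r' := r_{\lambda'}(\pi) \otimes r_{\lambda'}(\psi)^{-1}$, which is again strongly irreducible (twisting by a character preserves strong irreducibility). By the weak divisibility, $1$ is an eigenvalue of $r'(\Frob_v)$ at every unramified $v$, hence by Chebotarev density every element of the Zariski closure $H$ of the image of $r'$ has $1$ as an eigenvalue. Since $r'$ is strongly irreducible, $H^0$ acts irreducibly on the four-dimensional ambient space, and all the hypotheses of Theorem \ref{thm: punt} are met. Its conclusion forces the dimension to be odd, contradicting $n = 4$.

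The main obstacle is producing a strongly irreducible $\lambda'$: this cannot be read off from the machinery developed so far (which relies on essential self-duality via the potential automorphy results of \cite{blggt}), and must be imported from Ramakrishnan's work. An alternative avenue would be a direct $L$-function argument, comparing the absence of a pole of $L(s, \pi \times \psi^{-1})$ at $s=1$ (by Jacquet--Shalika, since $\pi$ and $\psi$ have different ranks) with the factor of $\zeta_F^S(s)$ contributed on the Galois side by the trivial summand of $r_\lambda(\pi) \otimes r_\lambda(\psi)^{-1}$; carrying this through cleanly, however, requires additional analytic input to preclude a compensating zero of the $3$-dimensional Galois $L$-function $L^S(s, s_3 \otimes r_\lambda(\psi)^{-1})$ at $s=1$.
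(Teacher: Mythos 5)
Your main argument is circular at the crucial step of producing a prime $\lambda'$ with $r_{\lambda'}(\pi)$ irreducible. In the paper's logical order this lemma comes \emph{before} any irreducibility statement for the non-self-dual compatible system: Lemma \ref{lem: irreducible density one} (density one irreducibility) is deduced \emph{from} the present lemma, and Theorem \ref{thm: non self-dual irred char 0} from that. So you cannot assume any member of the family is irreducible here. Importing Ramakrishnan's theorem does not repair this: his irreducibility result is stated for $\GL_4(\A_{\Q})$ rather than a general totally real field, and more importantly his proof runs through Proposition 7.8 of \cite{ramakrishnan2009irreducibility}, which \emph{is} the statement you are trying to prove (established there by the $L$-function argument below). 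Once a strongly irreducible $\lambda'$ is available, your combination of weak divisibility, Chebotarev, and Theorem \ref{thm: punt} is exactly how the paper later rules out one-dimensional summands in Theorem \ref{thm: non self-dual irred char 0} — but that is a later stage of the induction, not an admissible proof of this lemma.

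The paper's actual proof is the ``alternative avenue'' you set aside, and the ``additional analytic input'' you worried about is available. Writing $r_\lambda(\pi)=\chi_\lambda\oplus s_\lambda$ with $\chi_\lambda$ and $\nu_\lambda=\det s_\lambda$ the Galois representations of algebraic Hecke characters $\chi,\nu$, one does not try to control the degree-$3$ factor $L^S(s,s_\lambda\otimes\chi_\lambda^{-1})$ directly. Instead one multiplies $L^S(s,\pi\otimes\chi^{-1})$ by the second Rankin--Selberg $L$-function $L^S(s,\pi^\vee\otimes\nu\chi^{-1})$, so that the unknown three-dimensional pieces assemble (via $\wedge^2 s_\lambda\cong s_\lambda^\vee\otimes\nu_\lambda$) into $L^S(s,\wedge^2(\pi)\otimes\chi^{-1})$ together with $\zeta^S(s)$ and a Hecke $L$-function. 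By Kim's theorem $\wedge^2(\pi)$ is an isobaric automorphic representation of $\GL_6(\A_F)$, so by Jacquet--Shalika its twisted $L$-function does not vanish at $s=1$; the Hecke factor likewise does not vanish there, and $\zeta^S(s)$ contributes a simple pole. The left-hand side, a product of $\GL_4\times\GL_1$ Rankin--Selberg $L$-functions with $\pi$ cuspidal, is holomorphic at $s=1$. This contradiction needs no input about irreducibility of any $r_{\lambda'}(\pi)$, which is precisely why it can be placed where it is.
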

\begin{proof}
  This may be proved in exactly the same way as Proposition 7.8 of
  \cite{ramakrishnan2009irreducibility}. Suppose that
  $r_\lambda(\pi)=\chi_\lambda\oplus s_\lambda$ with $\chi_\lambda$ a
  character. Then $\chi_\lambda$ and $\det s_\lambda$ are both
  algebraic characters of $G_F$, so arise from automorphic
  representations $\chi$ and $\nu$ of $\GL_1(\A_F)$. One easily
  obtains an equality of incomplete
  $L$-functions \[L^S(s,\pi\otimes\chi^{-1})L^S(s,\pi^\vee\otimes\nu\chi^{-1})=L^S(s,\wedge^2(\pi)\otimes\chi^{-1})\zeta^S(s)L^S(s,\nu\chi^{-3}).\]Since
  $\wedge^2(\pi)$ is an isobaric automorphic representation of
  $\GL_6(\A_F)$ by \cite{MR1937203}, we see that the left hand side is
  holomorphic at $s=1$, but the right hand side has at least a simple
  pole at $s=1$, a contradiction.
\end{proof}
\begin{lem}
  \label{lem: irreducible density one}For a density one set of primes
  $\lambda$, $r_\lambda(\pi)$ is irreducible.
\end{lem}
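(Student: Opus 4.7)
The plan is to mimic the proof of Theorem~\ref{thm: irreducibility for density one}, using Lemma~\ref{lem: 3+1 doesn't happen} to dispense with the essential self-duality hypothesis when $n=4$.

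By Proposition~5.2.2 of \cite{blggt}, after removing a finite set of primes and a further set of density zero, we may produce a density one set of rational primes $\cL$ such that if $\lambda$ lies over an element of $\cL$ then each irreducible constituent of $\rbar_\lambda(\pi)$ is absolutely irreducible on restriction to $G_{F(\zeta_l)}$, and such that the hypotheses of Propositions~\ref{prop: 2d representations are odd} and~\ref{prop: residually dihedral compatible system implies globally dihedral and thus odd} are in force. Suppose for contradiction that for some such $\lambda$ there is a decomposition $r_\lambda(\pi)=r_\lambda(\pi)_1\oplus\cdots\oplus r_\lambda(\pi)_j$ into irreducibles with $j\ge 2$.

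The possible dimension profiles are $1+2$ and $1+1+1$ when $n=3$, and $1+3$, $1+1+2$, $1+1+1+1$, $2+2$ when $n=4$. In the $n=4$ case, Lemma~\ref{lem: 3+1 doesn't happen} immediately eliminates every decomposition containing a one-dimensional summand, leaving only $2+2$. Any one-dimensional summand is de Rham and hence (by e.g.\ Lemma~4.1.3 of \cite{cht}) corresponds to an algebraic Hecke character of $F$, and in particular is automorphic. Any two-dimensional summand $r_\lambda(\pi)_i$ is tautologically essentially self-dual with multiplier $\det r_\lambda(\pi)_i$; when $\Sym^2\rbar_\lambda(\pi)_i|_{G_{F(\zeta_l)}}$ is irreducible, Proposition~\ref{prop: 2d representations are odd} shows that $(r_\lambda(\pi)_i,\det r_\lambda(\pi)_i)$ is essentially self-dual and odd, while in the residually dihedral case Proposition~\ref{prop: residually dihedral compatible system implies globally dihedral and thus odd} gives the same conclusion after replacing $\cL$ by a density one subset.

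To conclude, I would follow the endgame of Theorem~\ref{thm: irreducibility for density one}: choose an imaginary quadratic extension $K/F$ in which every prime of $F$ above $l$ splits, apply Theorem~4.5.1 of \cite{blggt} to find a finite Galois CM extension $K'/K$ such that each two-dimensional summand $r_\lambda(\pi)_i|_{G_{K'}}$ is automorphic and irreducible, and descend to the maximal totally real subfield $F'$ of $K'$ via Lemma~1.5 of \cite{blght}. The character summands trivially descend as base changes of their associated Hecke characters along $F'/F$, so all summands are automorphic and irreducible over $F'$, and Proposition~\ref{prop: potentially automorphic implies irreducible} then forces $j=1$, a contradiction. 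The main obstacle is precisely the step where, in the essentially self-dual setting, one would invoke Corollary~\ref{cor: subreps of large dimension are odd esd} to dispose of a three-dimensional summand when $n=4$: that corollary is not available without essential self-duality of $\pi$, and we instead rely on Lemma~\ref{lem: 3+1 doesn't happen} (which uses Kim's theorem that $\wedge^2\pi$ is isobaric automorphic on $\GL_6(\A_F)$) to rule such a summand out directly.
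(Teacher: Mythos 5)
Your proposal is correct and follows essentially the same route as the paper: the published proof simply invokes Lemma~\ref{lem: 3+1 doesn't happen} to reduce to summands of dimension at most $2$ and then says the argument of Theorem~\ref{thm: irreducibility for density one} applies verbatim, which is exactly what you have spelled out. Your observation that the only step of that argument requiring essential self-duality of $\pi$ is the treatment of summands of dimension $\ge 3$ via Corollary~\ref{cor: subreps of large dimension are odd esd} is precisely the point.
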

\begin{proof}
  Suppose not. By Lemma \ref{lem: 3+1 doesn't happen}, there is a set
  of primes $\lambda$ of positive density such that $r_\lambda(\pi)$
  decomposes as a sum of irreducible representations of dimension at
  most $2$. The result now follows by the same proof as Theorem
  \ref{thm: irreducibility for density one}.
\end{proof}
\begin{thm}\label{thm: non self-dual irred char 0}
  $r_\lambda(\pi)$ is irreducible for all $\lambda$. 
\end{thm}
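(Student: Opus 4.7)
If $\pi$ is essentially self-dual invoke Theorem~\ref{thm: all irred char 0}; if $\pi$ is an automorphic induction apply Lemma~\ref{lem: not strongly irred}. So assume $\pi$ is neither, and suppose for contradiction that some $r_\lambda(\pi)$ is reducible. Because $\pi$ is regular, $r_\lambda(\pi)$ has pairwise distinct Hodge--Tate weights at the primes above $l$, so we may write $r_\lambda(\pi) \cong V_1 \oplus \cdots \oplus V_k$ with the $V_i$ pairwise non-isomorphic irreducibles and $k \ge 2$. For $n = 3$, at least one $V_i$ is one-dimensional and, being de Rham, arises from an algebraic Hecke character. For $n = 4$, Lemma~\ref{lem: 3+1 doesn't happen} rules out one-dimensional summands, so $k = 2$ and both $V_1, V_2$ are irreducible two-dimensional Galois representations; their determinants arise from Hecke characters as well.

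The strategy is to derive a contradiction from the Rankin--Selberg identity
\[
L^S(s, \pi \times \pi^\vee) = L^S(s, r_\lambda(\pi) \otimes r_\lambda(\pi)^\vee),
\]
valid as Dirichlet series for $S$ containing the ramification of $\pi$ and the places above $l$, by unramified local--global compatibility. The left-hand side has a simple pole at $s = 1$ by Jacquet--Shalika, since $\pi$ is cuspidal. The right-hand side factors as $\prod_{i,j} L^S(s, V_i \otimes V_j^\vee)$; each of the $k$ diagonal factors $L^S(s, V_i \otimes V_i^\vee)$ contributes a simple pole at $s = 1$ from the trivial subrepresentation of $V_i \otimes V_i^\vee$. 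Thus the Galois side would have a pole of order at least $k \ge 2$, unless the off-diagonal factors $L^S(s, V_i \otimes V_j^\vee)$ with $i \ne j$ supply compensating zeros. I would rule out such cancellations: in sub-cases where all $V_i$ are one-dimensional the off-diagonal factors reduce to non-trivial Hecke $L$-functions, hence are entire and non-vanishing at $s = 1$; in sub-cases where a two-dimensional summand appears, one invokes potential modularity of two-dimensional odd Galois representations, combined with the standard non-vanishing of cuspidal Rankin--Selberg $L$-functions of Hilbert modular forms on the edge $\Re(s) = 1$. Alternatively, for $n = 4$ one can bypass these analytic subtleties by working with Kim's automorphic exterior square $\wedge^2 \pi$ on $\GL_6(\A_F)$: the Galois decomposition
\[
\wedge^2 r_\lambda(\pi) = \det V_1 \oplus (V_1 \otimes V_2) \oplus \det V_2
\]
leads, after twisting by the Hecke character corresponding to $\det V_1$, to an identity of the same flavor as in the proof of Lemma~\ref{lem: 3+1 doesn't happen}, pitting an explicit $\zeta^S(s)$-pole against the controlled pole structure of $L^S(s, \wedge^2 \pi \otimes \chi^{-1})$ for suitable $\chi$.

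The main obstacle will be the oddness and potential modularity of any two-dimensional summands $V_i$, required to control the off-diagonal factors in the Rankin--Selberg identity. Oddness is not automatic in the non-self-dual setting, but one expects to force it by a parity argument analogous to Proposition~\ref{prop: 2d representations are odd}, using that the determinant of a summand of $r_\lambda(\pi)$ has archimedean signs determined by the central character of $\pi$. Whenever potential modularity is unavailable, the remaining cases can be handled by the ``double identity'' trick from Lemma~\ref{lem: 3+1 doesn't happen}, whereby two compatible $L$-function identities are multiplied to cancel the problematic off-diagonal factor, leaving a clean pole contradiction with the Jacquet--Shalika simple pole of a cuspidal $L$-function.
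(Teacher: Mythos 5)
Your reductions at the outset (the essentially self-dual case via Theorem~\ref{thm: all irred char 0}, automorphic inductions via Lemma~\ref{lem: not strongly irred}, exclusion of one-dimensional summands for $n=4$ via Lemma~\ref{lem: 3+1 doesn't happen}) match the paper, but the core of your argument --- the Rankin--Selberg pole count --- has a gap that you partly flag but do not close, and that cannot be closed by the means you suggest. To get a pole of order at least $2$ on the Galois side of $L^S(s,\pi\times\pi^\vee)$ you need the diagonal remainders $L^S(s,\ad^0 V_i)$ as well as the off-diagonal factors $L^S(s,V_i\otimes V_j^\vee)$ to be holomorphic and non-zero at $s=1$. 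This is only known when those Galois representations are (potentially) automorphic, and establishing that requires oddness, residual irreducibility of $\Sym^2\rbar$, Fontaine--Laffaille conditions, $l$ large and unramified, and so on --- exactly the hypotheses of Proposition~\ref{prop: 2d representations are odd} and of the potential automorphy theorems, which hold only for a density-one set of $\lambda$ and genuinely fail at the remaining primes. That is why this style of argument proves Theorem~\ref{thm: irreducibility for density one} and Lemma~\ref{lem: irreducible density one} but not the present statement, which concerns \emph{every} $\lambda$. Your fallback via Kim's $\wedge^2\pi$ has the same defect: to extract a pole of $L^S(s,\wedge^2\pi\otimes(\det V_1)^{-1})$ from its $\zeta^S(s)$ factor you must rule out a compensating zero of $L^S(s,V_1\otimes V_2\otimes(\det V_1)^{-1})$ at $s=1$, and $V_1\otimes V_2$ is not known to be automorphic.

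The paper circumvents all of this by working with the compatible system rather than a single prime. A reducibility of $r_{\lambda'}(\pi)$ (or of $\wedge^2 r_{\lambda'}(\pi)$) at one bad prime $\lambda'$ forces the characteristic polynomials of Frobenius to be divisible by those of a fixed algebraic Hecke character at \emph{all} primes; specializing to a prime $\lambda$ where $r_\lambda(\pi)$ --- and hence, by Corollary~\ref{cor: wedge squared irreducible}, $\wedge^2 r_\lambda(\pi)$ --- is strongly irreducible, Zariski density of Frobenius elements forces every element of the Zariski closure of the image to fix a vector, which Theorem~\ref{thm: punt} (and, for the exterior square step, the observation that $\sl_4$ acting via $\wedge^2$ has elements with no fixed vector) shows is impossible. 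No non-vanishing statement for non-automorphic $L$-functions is needed anywhere. To repair your proof you would need to replace its analytic core with this weak-divisibility/Zariski-closure argument, i.e.\ with the proofs of Lemma~\ref{lem: 2 divides 5} and Corollary~\ref{cor: divisibility by a single character is impossible}.
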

\begin{proof}
  Let $\cR$ denote the compatible system $\{r_\lambda(\pi)\}$. By
  Theorem \ref{thm: all irred char 0}, we may assume that $\pi$ is not
  essentially self-dual. By Lemmas \ref{lem: not strongly irred} and
  \ref{lem: irreducible density one}, and Proposition 5.2.2 of
  \cite{blggt}, we may assume that $r_\lambda(\pi)$ is strongly
  irreducible for some $\lambda$. Then the proof of Lemma \ref{lem: 2
    divides 5} goes through verbatim, as does that of Corollary
  \ref{cor: divisibility by a single character is impossible}, and we
  see that it is impossible for $r_{\lambda'}(\pi)$ to have a
  one-dimensional summand for any $\lambda'$.

  We are done if $n=3$. If $n=4$, the only possibility is that for
  some $\lambda'$, $r_{\lambda'}(\pi)\cong s_{\lambda'}\oplus
  t_{\lambda'}$, with $s_\lambda$ and $t_\lambda$ both
  2-dimensional. Since we have assumed that $\pi$ is not essentially
  self-dual, we see from Corollary \ref{cor: wedge squared
    irreducible} that (using the same $\lambda$ as in the first
  paragraph) $\wedge^2 r_\lambda(\pi)$ is strongly irreducible. On the
  other hand, \[\wedge^2 r_{\lambda'}(\pi)\cong s_{\lambda'}\otimes
  t_{\lambda'}\oplus\det(s_{\lambda'})\oplus\det(t_{\lambda'}).\] It
  follows that the compatible system $\wedge^2\cR$ is weakly divisible
  by the compatible system of a character (in fact, two
  characters). After twisting, we may suppose that this character is
  trivial. We then obtain a contradiction as in the proof of Theorem
  \ref{thm: punt} (note that the semisimple part of the Lie algebra of
  the Zariski closure of $\wedge^2 r_\lambda(\pi)(G_F)$ is $\sl_4$
  acting via $\wedge^2$, and it is not the case that every element of
  $\sl_4$ fixes some vector under this action).
\end{proof}

\begin{thm}
  For all but finitely many $\lambda$, $\rbar_\lambda(\pi)$ is irreducible.
\end{thm}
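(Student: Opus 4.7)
The plan is to follow the template of Theorem~\ref{thm: all but finitely many irred over Q}, substituting non-essentially-self-dual analogues of each key lemma where appropriate. First, by Theorem~\ref{thm: all but finitely many irred over Q} I may assume $\pi$ is not essentially self-dual. By Theorem~\ref{thm: non self-dual irred char 0} every $r_\lambda(\pi)$ is irreducible, and by Lemma~\ref{lem: not strongly irred} I may further assume that $\pi$ is not an automorphic induction (for otherwise the conclusion of that lemma already yields residual irreducibility outside a finite set), so that every $r_\lambda(\pi)$ is in fact strongly irreducible. Suppose for contradiction that $\rbar_\lambda(\pi)$ is reducible for infinitely many $\lambda$.

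The next step is to establish the analogue of Lemma~\ref{lemma:partitiontype} in the non-self-dual setting. The ``weak divisibility'' half of that lemma is a direct consequence of Lemma~\ref{lemma:characters} and Corollary~\ref{cor: dihedral even finiteness}, both of which are proved with no self-duality hypothesis. Thus if for infinitely many $\lambda$ the semisimplification $\rbar_\lambda(\pi)^{\sss}$ contains a character, then the compatible system $\cR=\{r_\lambda(\pi)\}$ is weakly divisible by a compatible system of algebraic Hecke characters, and analogously for dihedral two-dimensional subrepresentations. To extract a contradiction from weak divisibility, I would re-run the proof of Lemma~\ref{lem: 2 divides 5} verbatim, as was observed to work in the proof of Theorem~\ref{thm: non self-dual irred char 0}: either $\pi$ is an automorphic induction (excluded) or $n$ is odd and, for some totally real Galois extension $F'/F$, one has $r_\lambda(\pi)|_{G_{F'}}\cong\Sym^{n-1}s_\lambda\otimes\det(s_\lambda)^{(1-n)/2}$ for a two-dimensional automorphic representation $s_\lambda$ of $G_{F'}$. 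In the latter case $r_\lambda(\pi)|_{G_{F'}}$ is essentially self-dual; since $(\det s_\lambda)^{n-1}$ is the unique one-dimensional $G_{F'}$-summand of $r_\lambda(\pi)^{\otimes 2}|_{G_{F'}}$, strong irreducibility of $r_\lambda(\pi)$ forces this character to be $\Gal(F'/F)$-stable and hence to descend, so that $\pi$ itself is essentially self-dual, contradicting our assumption. This already handles $n=3$ entirely, since every residually reducible three-dimensional representation contains a character.

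For the remaining case, $n=4$ with $\rbar_\lambda(\pi)^{\sss}\cong \sbar_\lambda\oplus\tbar_\lambda$ (both irreducible, two-dimensional, and non-dihedral) for infinitely many $\lambda$, the plan is to pass to the second exterior power. By Corollary~\ref{cor: wedge squared irreducible} every $\wedge^2 r_\lambda(\pi)$ is strongly irreducible, whereas
\[
\wedge^2\rbar_\lambda(\pi)\;\cong\;\sbar_\lambda\otimes\tbar_\lambda\,\oplus\,\det\sbar_\lambda\,\oplus\,\det\tbar_\lambda
\]
contains characters for infinitely many $\lambda$. Applying Lemma~\ref{lemma:characters} to the six-dimensional compatible system $\wedge^2\cR$ yields that $\wedge^2\cR$ is weakly divisible by a compatible system of algebraic Hecke characters; after twisting by the inverse of such a character, every Frobenius in the image of $\wedge^2 r_\lambda(\pi)$ has a fixed vector, so by Zariski density every element of the Zariski closure of the image does too.

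The main obstacle, and the concluding step, is to derive a contradiction from this fixed-vector property, in the spirit of Theorem~\ref{thm: punt} and the end of the proof of Theorem~\ref{thm: non self-dual irred char 0}. Since $r_\lambda(\pi)$ is strongly irreducible and not essentially self-dual, the table in Proposition~\ref{prop: list of algebras} forces the semisimple part of the Lie algebra of the Zariski closure of $r_\lambda(\pi)(G_F)$ to be $\slf_4$; consequently the Zariski closure of the image of $\wedge^2 r_\lambda(\pi)$ has Lie algebra with semisimple part $\slf_4$ acting on $\wedge^2\C^4$ via the second exterior power of the standard representation. A generic diagonal element $\diag(a,b,c,d)\in\slf_4$ (with $a+b+c+d=0$) acts on $\wedge^2\C^4$ with eigenvalues $\{a+b,a+c,a+d,b+c,b+d,c+d\}$, which are generically all nonzero; hence a generic element of $\slf_4$ has no fixed vector on $\wedge^2\C^4$, yielding the required contradiction.
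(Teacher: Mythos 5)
Your proof is correct and follows essentially the same route as the paper: reduce to the non-essentially-self-dual, strongly irreducible case, run the analogue of Lemma~\ref{lemma:partitiontype} to dispose of one-dimensional (and hence all $n=3$) constituents, and for $n=4$ pass to $\wedge^2\cR$, which becomes weakly divisible by a character, contradicting the fixed-vector criterion of Theorem~\ref{thm: punt} since $\slf_4$ acting via $\wedge^2$ has no zero weight. The only (harmless) deviation is that in the odd-$n$ symmetric-power branch you re-derive essential self-duality of $\pi$ by descending the pairing, where the paper simply invokes the final conclusion of Lemma~\ref{lem: 2 divides 5}.
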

\begin{proof}
  By Theorem \ref{thm: all but finitely many irred over Q}, it is enough to assume that $\pi$ is not essentially
  self-dual. Again, the proof of Lemma \ref{lemma:partitiontype} goes
  over without change to the present setting, completing the proof if
  $n=3$. If $n=4$, it suffices to show that there cannot be infinitely
  many $\lambda$ for which $\rbar_\lambda(\pi)=\sbar_\lambda\oplus
  \tbar_\lambda$ with $s_\lambda$ and $t_\lambda$ 2-dimensional. However,
  we again note that in this case we have \[\wedge^2 r_\lambda(\pi)=
  \sbar_{\lambda} \otimes \tbar_{\lambda} \oplus \det(\sbar_{\lambda})
  \oplus \det(\tbar_{\lambda}),\] and we deduce that the compatible
  system $\{\wedge^2 r_\lambda(\pi)\}$ is weakly divisible by the
  compatible system of a character. This gives a contradiction as in
  the proof of Theorem \ref{thm: non self-dual irred char 0}.
\end{proof}

\begin{thm}
  Let $\fg_\lambda$ be the Lie algebra of the Zariski closure of
  $r_\lambda(G_F)$. Then $\fg_\lambda$ is independent of
  $\lambda$.
\end{thm}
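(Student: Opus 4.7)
The plan is to reduce to cases already handled, by combining strong irreducibility from Section 8 with the classification of irreducibly-acting Lie algebras in Proposition \ref{prop: list of algebras}. If $\pi$ is essentially self-dual this is Theorem \ref{thm: Lie algebras are independent of l}, so assume throughout that $\pi$ is not essentially self-dual. By Lemma \ref{lem: not strongly irred} and Theorem \ref{thm: non self-dual irred char 0}, either $r_\lambda(\pi)$ is strongly irreducible for every $\lambda$, or $\pi$ is an automorphic induction.

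First I would dispose of the automorphic induction case. If $n=3$, $\pi$ is necessarily induced from an algebraic Hecke character of a cubic extension $L/F$, so the image of $r_\lambda(\pi)(G_L)$ lies in a maximal torus of $\GL_3$ and $r_\lambda(\pi)(G_F)$ in its normalizer; $\fg_\lambda$ is therefore an abelian subalgebra determined by the infinity type of the inducing character, and is manifestly independent of $\lambda$. If $n=4$, $\pi$ is induced either from a Hecke character of a quartic extension (handled identically) or from a regular algebraic cuspidal $\varpi$ on $\GL_2(\A_K)$ for a quadratic extension $K/F$ (which must be totally real, by the infinitesimal-character argument in the proof of Corollary \ref{cor:induct}). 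In the latter case $r_\lambda(\pi)|_{G_K} \cong s_\lambda(\varpi) \oplus s_\lambda(\varpi^c)$, and the Lie algebra of the Zariski closure of the image on $G_K$ is determined by that of the Galois representations attached to the Hilbert modular form $\varpi$; the $\lambda$-independence of the latter is a classical Ribet-type result, and the index-two passage back up to $G_F$ then gives $\lambda$-independence of $\fg_\lambda$.

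In the remaining case $r_\lambda(\pi)$ is strongly irreducible for all $\lambda$, so the connected Zariski closure $G_\lambda^0$ acts irreducibly on the $n$-dimensional representation. The key step is to show that $\gh_\lambda$ cannot preserve a non-degenerate symmetric or alternating form: if it did, then by Schur's lemma the invariant form would be unique up to scalar, so the full Zariski closure $G_\lambda$ (which normalizes $G_\lambda^0$) would preserve it up to a similitude character $\chi_\lambda: G_F \to \Qlbar^\times$. This yields an isomorphism $r_\lambda(\pi) \cong r_\lambda(\pi)^\vee \otimes \chi_\lambda$, and the relation $\chi_\lambda^n = (\det r_\lambda(\pi))^2$ forces $\chi_\lambda$ to come from an algebraic Hecke character; strong multiplicity one for $\GL_n$ then gives an automorphic duality $\pi \cong \pi^\vee \otimes \chi'$, contradicting our assumption. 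Inspecting the table in Proposition \ref{prop: list of algebras} for $n \in \{3,4\}$, every entry apart from $\gh_\lambda = \slf_n$ preserves such a form, so $\gh_\lambda = \slf_n$ for all $\lambda$. Finally $\gz_\lambda$ acts by scalars (by Schur) and is determined by $\det r_\lambda(\pi)$, a compatible system of algebraic characters; this gives $\lambda$-independence of $\gz_\lambda$ and completes the proof. The main obstacle in this outline is the form-preservation step: ensuring that the similitude character $\chi_\lambda$ is genuinely algebraic, so that strong multiplicity one can promote the Galois-theoretic duality to an automorphic one.
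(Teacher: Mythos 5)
Your proof is correct and follows essentially the same route as the paper: reduce to the case where $\pi$ is not essentially self-dual and $r_\lambda(\pi)$ is strongly irreducible, then rule out every Lie algebra other than $\slf_n$ by observing that a preserved bilinear form would give $r_\lambda(\pi)\cong r_\lambda(\pi)^\vee\otimes\chi_\lambda$ and hence, by strong multiplicity one, the essential self-duality of $\pi$ --- which is exactly the content of Lemma \ref{lem: wedge squared implies self dual representation} and Corollary \ref{cor: wedge squared irreducible} that the paper invokes. Your explicit handling of the automorphic induction case and of $\gz_\lambda$ likewise matches the proof of Theorem \ref{thm: Lie algebras are independent of l}.
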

\begin{proof}
  As in the proof of theorem Theorem \ref{thm: Lie algebras are
    independent of l}, if we write
  $\fg_\lambda=\gh_\lambda\oplus\gz_\lambda$ with $\gh_\lambda$
  semisimple and $\gz_\lambda$ abelian, it suffices to show that
  $\gh_\lambda$ is independent of $\lambda$.

 If $n=3$, then the proof
  of Theorem \ref{thm: Lie algebras are independent of l} goes through
  unchanged, so we may assume that $n=4$. By Theorem \ref{thm: Lie
    algebras are independent of l}, we may assume that $\pi$ is not
  essentially self-dual, and by Lemma \ref{lem: not strongly irred}
  and Theorem \ref{thm: non self-dual irred char 0}, we may assume that
  $r_\lambda(\pi)$ is strongly irreducible for all $\lambda$. It then
  follows from the proofs of Lemma \ref{lem: wedge squared implies
    self dual representation} and Corollary \ref{cor: wedge squared
    irreducible} that $\gh_\lambda=\sl_4$ for all $\lambda$, as
  required.
\end{proof}
\bibliographystyle{amsalpha}
\bibliography{irreducibility}
\end{document}